\newcommand\blfootnote[1]{%
  \begingroup
  \renewcommand\thefootnote{}\footnote{#1}%
  \addtocounter{footnote}{-1}%
  \endgroup
}
\newcommand\restr[2]{{%
\left.
\kern-
\nulldelimiterspace %
#1 %
\right|_{#2} %
}}
\tikzset{>=stealth}
\newtheorem{question}[theorem]{Question}
\address{University of la R\'{e}union,
Laboratory of Mathematics and Computer Science\\
15 avenue Rene Cassin, CS92003-97744 Saint-Denis Cedex 9, France (R\'{e}union) \\[5pt]
Masaryk University, Faculty of Science,
Department of Mathematics and Statistics\\
Kotl\'{a}\v{r}sk\'{a} 2, 611 37 Brno, Czech Republic\\
}
\keywords{$\kappa $-topos, theory of presheaf type}
\begin{document}
\title{Every theory is eventually of presheaf type}
\author{Christian Espíndola, Kristóf Kanalas}
\date{}

\maketitle

\begin{abstract}
    We give a detailed and self-contained introduction to the theory of $\lambda $-toposes and prove the following: 1) A $\lambda $-separable $\lambda $-topos has enough $\lambda $-points. 2) The classifying $\lambda $-topos of a $\kappa $-site $(\mathcal{C},E)$ is a presheaf topos (assuming $\kappa \vartriangleleft \lambda =\lambda ^{<\lambda }$, $|\mathcal{C}|,|E|<\lambda $). 
    \blfootnote{The second-named author acknowledges the support of the Grant Agency of the Czech Republic (grant number 22-02964S) and Masaryk University (project MUNI/A/1457/2023). This research was carried out during a visit to University of la Réunion, partially funded by France.}
\end{abstract}

\tableofcontents

\section{Introduction}

The geometric fragment $L^g_{\infty \kappa }$ of $L_{\infty \kappa }$ is the class of formulas $\forall \Vec{x}(\varphi (\vec{x}) \to \psi(\vec{x} ))$ where $\varphi $ and $\psi $ are built up from atomic formulas, $<\kappa $ $\bigwedge $ (including $\top$), arbitrary $\bigvee $ (including $\bot $) and $\exists $. This fragment (especially when $\kappa =\omega $) is the subject of categorical logic. The key connections between geometric logic and category theory are: 
\begin{itemize}
     \item[1)] Syntactic categories/ sites: The algebraization of logic using categories originated in the works of F.~W.~Lawvere (see \cite{Lawvere1} and \cite{Lawvere2}). Subsequently, the categorical structure representing theories has evolved through several stages. In these notes a theory $T\subseteq L^g_{\infty \kappa }$ will be replaced by a $\kappa $-site $(\mathcal{C}_T,E)$: a category $\mathcal{C}_T$ with $<\kappa $-limits together with a distinguished collection $E$ of arrow-families (wide cospans). 
     
     As a result $Mod(T)$, the category of $T$-models and homomorphisms will correspond to $Mod(\mathcal{C}_T,E,\kappa )$, the category of $\mathcal{C}_T\to \mathbf{Set}$ $\kappa $-lex functors turning the $E$-families into jointly epimorphic ones, see \cite[Prop.~3.2.5]{accessible} and \cite[Chapter 8]{makkai}. 
    \item[2)] Toposes: In the $\kappa =\omega $ case one can generate a Grothendieck-topology $\langle E \rangle _{\omega }$ out of $E$ and consider sheaves wrt.~it. The resulting category $Sh(\mathcal{C}_T,\langle E\rangle _{\omega })$ is the classifying topos of $T$, see \cite{elephant} and \cite{sheaves}. The generalization for $\kappa >\omega $ is discussed in Section 3.
    \item[3)] Accessible categories: For any theory $T\subseteq L_{\infty \kappa }^g$, the category $Mod(T)$ is accessible, and every accessible category arises this way, see \cite{accessible} and \cite{rosicky}. 
   
\end{itemize}

Now given a geometric theory $T\subseteq L_{\infty \omega }^g$ (equivalently: an $\omega $-site $(\mathcal{C},E)$) the following questions are hard: What is the accessibility rank of $Mod(T)$/ $Mod(\mathcal{C},E,\omega)$? Is the classifying topos/ $Sh(\mathcal{C},\langle E\rangle _{\omega })$ equivalent to a presheaf topos? (In which case $Mod(T)$ is finitely accessible, but not conversely, see \cite{thofpresh} for further details.) 

Our motto could be the following: Given any $T\subseteq L_{\infty \kappa }^g$ there is a cardinal $\lambda >\kappa $ such that the classifying $\lambda $-topos of $T\subseteq L_{\infty \kappa }^g \subseteq L_{\infty \lambda }^g$ is a presheaf topos (modulo some assumptions on cardinal arithmetic, for example GCH is more than enough).

The key ideas of this paper have their origin in \cite{inflog}, \cite{completeness} and \cite[Section 3.2 and Section 4]{eventualcat}, and our main result is a variant of \cite[Theorem 4.1]{eventualcat}. However, the discussion here is more detailed, precise and accessible. In particular our proofs are entirely category theoretic, in the sense that they avoid using techniques like "first make a theory from the site/topos in question, then change the fragment/ extend the signature/ add some axioms, finally make a site/topos from this modified theory". 

Now we comment on the structure of this paper. First the fundamental notion of compatibility between extremal epimorphic families and $<\kappa $-limits is discussed. In \cite{barrex} a $\kappa $-regular category is defined as one having $<\kappa $-limits and pullback-stable effective epi-mono factorizations, such that the transfinite cocomposition of a continuous $<\kappa $-sequence of effective epis is effective epi. The natural counterpart of this for coherent categories is formulated in terms of covering cotrees: if one builds a cotree on an object, which is locally extremal epimorphic and every branch is continuous, $<\kappa $, then the cotree is globally extremal epimorphic, i.e.~the transfinite cocomposition of the branches form an extremal epic family on the root. In the next section we study this notion in detail and prove a completeness theorem.

Then we define a $\kappa $-topos as a Grothendieck-topos whose extremal epic families are compatible with $<\kappa $-limits in the above sense. We lift the classical results concerning classifying toposes to $\kappa $-toposes. Finally we reformulate the completeness theorem in terms of $\kappa $-toposes whose defining site is small in an appropriate sense, generalizing \cite[Theorem 6.2.4]{makkai} (Theorem \ref{main1}), and derive the main result of this paper; that the classifying $\lambda $-topos of a $\kappa $-site $(\mathcal{C},E)$ is a presheaf topos, assuming $\kappa \vartriangleleft \lambda =\lambda ^{<\lambda }$, $|\mathcal{C}|,|E|<\lambda $ (Theorem \ref{main2}).

The authors want to thank Will Boney and Nate Ackerman for their valuable remarks, including a correction to the earlier version of Remark \ref{numberofbranches}. They also thank the referee for raising interesting questions, which motivated Proposition \ref{prop34}, Question \ref{quest35}, Remark \ref{rem37}, Remark \ref{rem313}, Remark \ref{rem48} and Remark \ref{rem57}.

\section{A completeness theorem}

This section motivates the definition of a $\kappa $-Grothendieck-topology, i.e.~the correct notion of compatibility between covers and $<\kappa $-limits. Its content is a completeness theorem: if a $\kappa $-lex category $\mathcal{C}$ has $\leq \kappa $-big Hom-sets then given a $\leq \kappa $-big collection $E$ of extremal epimorphic families interacting well with $<\kappa $-limits, there is a jointly conservative set of $\mathcal{C}\to \mathbf{Set}$ $\kappa $-lex $E$-preserving functors. As an application we give a Rasiowa-Sikorski-like result for sufficiently distributive lattices.
(In Section 5 we will present a more general version of the completeness theorem, based on the same idea.)

We begin with a general remark on the notation, which will also be used in later sections.

\begin{remark}
    To avoid the overuse of $*$, we will write $f^{\circ }=-\circ f$ for pre-composition with $f$, and $f_{\circ }=f\circ -$ for post-composition. Meanwhile, the inverse image of a geometric morphism $F$ will be denoted by $F^*$, and the direct image by $F_*$. Similarly, if $\varphi :\mathcal{C}\to \mathcal{D}$ is a morphism of sites or if $\varphi :\mathcal{C}\to Sh(\mathcal{D})$ is a model, then $\varphi ^*:Sh(\mathcal{C})\leftrightarrow Sh(\mathcal{D}):\varphi _*$ is the corresponding geometric morphism. 
\end{remark}

\begin{definition}
\label{compatible}
  $\kappa =cf(\kappa )\geq \aleph _0$. $\mathcal{C}$ is $\kappa $-lex. A class $E$ of extremal epimorphic families is said to be \emph{compatible with $<\kappa $-limits} if
\begin{enumerate}
    \item Each member of $E$ is pullback-stable (i.e.~its pullback along any map is extremal epimorphic). $E^{pb}$ denotes the class of extremal epimorphic families obtained this way.
    \item Given a rooted cotree (as a diagram in $\mathcal{C}$), such that on any vertex its predecessors form a member of $E^{pb}$, every branch has length $<\kappa $ and every branch is continuous (objects sitting at limit points are limits in $\mathcal{C}$); it follows that the transfinite cocomposition of the branches is extremal epimorphic.

\adjustbox{scale=0.65,center}{

\tikzset{every picture/.style={line width=0.75pt}} %

\begin{tikzpicture}[x=0.75pt,y=0.75pt,yscale=-1,xscale=1]
\draw    (261,118) -- (214.7,146.94) ;
\draw [shift={(213,148)}, rotate = 327.99] [color={rgb, 255:red, 0; green, 0; blue, 0 }  ][line width=0.75]    (10.93,-3.29) .. controls (6.95,-1.4) and (3.31,-0.3) .. (0,0) .. controls (3.31,0.3) and (6.95,1.4) .. (10.93,3.29)   ;
\draw    (259,180) -- (214.74,154.98) ;
\draw [shift={(213,154)}, rotate = 29.48] [color={rgb, 255:red, 0; green, 0; blue, 0 }  ][line width=0.75]    (10.93,-3.29) .. controls (6.95,-1.4) and (3.31,-0.3) .. (0,0) .. controls (3.31,0.3) and (6.95,1.4) .. (10.93,3.29)   ;
\draw    (317,150) -- (270.7,178.94) ;
\draw [shift={(269,180)}, rotate = 327.99] [color={rgb, 255:red, 0; green, 0; blue, 0 }  ][line width=0.75]    (10.93,-3.29) .. controls (6.95,-1.4) and (3.31,-0.3) .. (0,0) .. controls (3.31,0.3) and (6.95,1.4) .. (10.93,3.29)   ;
\draw    (315,212) -- (270.74,186.98) ;
\draw [shift={(269,186)}, rotate = 29.48] [color={rgb, 255:red, 0; green, 0; blue, 0 }  ][line width=0.75]    (10.93,-3.29) .. controls (6.95,-1.4) and (3.31,-0.3) .. (0,0) .. controls (3.31,0.3) and (6.95,1.4) .. (10.93,3.29)   ;
\draw [color={rgb, 255:red, 128; green, 128; blue, 128 }  ,draw opacity=1 ]   (295,84) .. controls (273.22,90.93) and (251.44,117.46) .. (207.34,142.25) ;
\draw [shift={(206,143)}, rotate = 330.95] [color={rgb, 255:red, 128; green, 128; blue, 128 }  ,draw opacity=1 ][line width=0.75]    (10.93,-3.29) .. controls (6.95,-1.4) and (3.31,-0.3) .. (0,0) .. controls (3.31,0.3) and (6.95,1.4) .. (10.93,3.29)   ;
\draw [color={rgb, 255:red, 128; green, 128; blue, 128 }  ,draw opacity=1 ]   (391,90) .. controls (326.32,94.98) and (286.4,200.93) .. (224.93,150.77) ;
\draw [shift={(224,150)}, rotate = 39.99] [color={rgb, 255:red, 128; green, 128; blue, 128 }  ,draw opacity=1 ][line width=0.75]    (10.93,-3.29) .. controls (6.95,-1.4) and (3.31,-0.3) .. (0,0) .. controls (3.31,0.3) and (6.95,1.4) .. (10.93,3.29)   ;
\draw [color={rgb, 255:red, 128; green, 128; blue, 128 }  ,draw opacity=1 ]   (361,239) .. controls (314.71,240.97) and (240.27,175.02) .. (203.65,155.84) ;
\draw [shift={(202,155)}, rotate = 26.57] [color={rgb, 255:red, 128; green, 128; blue, 128 }  ,draw opacity=1 ][line width=0.75]    (10.93,-3.29) .. controls (6.95,-1.4) and (3.31,-0.3) .. (0,0) .. controls (3.31,0.3) and (6.95,1.4) .. (10.93,3.29)   ;

\end{tikzpicture}
}

\end{enumerate}
\end{definition}

\begin{definition}
    We shall name a few kind of diagrams: by \emph{cotree} we will mean the opposite of a rooted tree as a diagram in $\mathcal{C}$. By continuous \emph{$(E^{pb},\kappa )$-cotree} we mean that on any vertex its predecessors form a member of $E^{pb}$, every branch has length $<\kappa $ and every branch is continuous. By a continuous \emph{cofinal} $(E^{pb},\kappa ^+)$-cotree we mean that the cotree has height $\kappa $ and each branch has size $\kappa $ (hence order type $\kappa ^{op}$).
    When $E$ consists of all extremal epimorphic families with $<\lambda $-many legs, we may write locally covering continuous $(\lambda ,\kappa )$-cotree instead of continuous $(E,\kappa )$-cotree.
\end{definition}

\begin{definition}
    A category $\mathcal{C}$ is of \emph{local size} $< \kappa $ if each Hom-set is of cardinality $<\kappa $.
\end{definition}

\begin{theorem}
\label{completenesstree}
$\mathcal{C}$ is $\kappa $-lex of local size $\leq \kappa $, and it has a strict initial object $\emptyset $. Let $E$ be a set of extremal epimorphic families, such that $|E|\leq \kappa $ and $E$ is compatible with $<\kappa $-limits. Then given any object $u_0\in \mathcal{C}$ there is a continuous cofinal $(E^{pb},\kappa ^+)$-cotree with root $u_0$ such that given any branch $u_0\xleftarrow{p_{0,i_1}}u_{0,i_1}\xleftarrow{p_{0,i_1,i_2}} u_{0,i_1,i_2} \leftarrow \dots $ the colimit of the representable functors $\mathcal{C}(u_0,-)\xRightarrow{p^{\circ }}\mathcal{C}(u_{0,i_1},-)\xRightarrow{p^{\circ }} \dots $ is either the terminal copresheaf (iff the branch gets eventually $\emptyset$) or it preserves the extremal epimorphic families in $E$. When $u_0\neq \emptyset $ at least one branch yields a non-terminal copresheaf.
\end{theorem}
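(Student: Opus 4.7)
\emph{Approach.} My plan is a Henkin-style transfinite construction of height $\kappa$: build the cotree level by level while scheduling at every node $u_\sigma$ all ``requests'' $(h,\mathcal{F})$, where $h:u_\sigma\to w$ is a morphism and $\mathcal{F}\in E$ is a family with apex $w$, and discharge each request at a successor stage by refining via the pullback $h^*\mathcal{F}\in E^{pb}$. The counting $|E|\cdot\kappa\le\kappa$ bounds the number of requests at any node, and a standard interleaving using a surjection $\kappa\twoheadrightarrow\kappa\times\kappa$ ensures that every request is scheduled on every descending branch.

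\emph{Construction.} Place $u_0$ at level $0$. At successor stage $\alpha+1$, for each level-$\alpha$ node $u_\sigma$ take the next scheduled request $(h,\mathcal{F})$ at $u_\sigma$ and let its children be the legs of $h^*\mathcal{F}\in E^{pb}$, using a trivial $\{\mathrm{id}_{u_\sigma}\}$ when no request is pending. At a limit stage $\lambda<\kappa$, for each branch $\{u_{b\upharpoonright\beta}\}_{\beta<\lambda}$ put $u_{b\upharpoonright\lambda}=\lim_{\beta<\lambda}u_{b\upharpoonright\beta}$, which exists by $\kappa$-lexness. Continuing the recursion to level $\kappa$ yields the required continuous cofinal $(E^{pb},\kappa^+)$-cotree.

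\emph{Dichotomy along a branch.} For a branch $b$, write $F_b=\mathrm{colim}_\alpha\mathcal{C}(u_{b\upharpoonright\alpha},-)$. If $u_{b\upharpoonright\alpha_0}\cong\emptyset$ for some $\alpha_0$, strict initiality forces $u_{b\upharpoonright\alpha}\cong\emptyset$ for every $\alpha\ge\alpha_0$, and $\mathcal{C}(\emptyset,-)$ being the one-point copresheaf makes $F_b$ terminal. Otherwise $F_b(\emptyset)=\varinjlim\mathcal{C}(u_{b\upharpoonright\alpha},\emptyset)=\emptyset$, so $F_b$ is non-terminal; and given $\xi\in F_b(w)$ represented by some $h:u_{b\upharpoonright\alpha}\to w$ with $\mathcal{F}\in E$ over $w$, the bookkeeping discharges the request $(h,\mathcal{F})$ at some later level $\gamma$ of the branch, where the children of $u_{b\upharpoonright\gamma}$ split $h^*\mathcal{F}$; the branch then passes through one leg $u_{b\upharpoonright\gamma}\times_w w_j$ whose projection to $w_j$ lifts $\xi$ along $F_b(g_j)$, giving $E$-preservation.

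\emph{Non-terminality when $u_0\neq\emptyset$.} Applying compatibility of $E$ with $<\kappa$-limits to the initial sub-cotree of height $\alpha<\kappa$ shows that $\{u_\sigma:|\sigma|=\alpha\}$ is extremal epimorphic over $u_0$. Strict initiality makes $\emptyset\to u_0$ a monomorphism (since $\emptyset\times_{u_0}\emptyset\cong\emptyset$), and an extremal epi family factoring through a mono forces the mono to be iso; so not every level-$\alpha$ object can be $\emptyset$, for otherwise $u_0\cong\emptyset$. Hence at each level $<\kappa$ some node is non-$\emptyset$. The main obstacle will be upgrading this level-wise statement to an honest $\kappa$-branch that is never $\emptyset$: a naive K\"onig-style extraction fails in general (e.g.\ for $\kappa=\omega_1$, Aronszajn trees exist), so I would build a \emph{distinguished branch} in tandem with the cotree, choosing at each successor stage a non-$\emptyset$ child (available by the same extremal-epi + mono principle applied at the parent) and folding ``limit-stage non-emptiness threats'' into the same schedule of $\le\kappa$ requests, so that a diagonal argument exploiting $\kappa=\mathrm{cf}(\kappa)$, local size $\le\kappa$, $|E|\le\kappa$, and one more application of $E$-compatibility forces the limit of the distinguished chain at every limit stage $<\kappa$ to remain non-$\emptyset$.
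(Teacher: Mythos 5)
Your construction, bookkeeping, and the dichotomy/preservation argument along a branch coincide with the paper's proof and are fine. The genuine gap is in the last step: producing a branch that never becomes $\emptyset$ when $u_0\neq\emptyset$. You correctly observe that the level-wise statement (``at each level $\alpha<\kappa$ some node is non-initial'') does not yield a branch and that K\"onig-style extraction fails for uncountable $\kappa$; but the fix you sketch --- building a distinguished branch by picking a non-$\emptyset$ child at each successor stage and ``folding limit-stage non-emptiness threats into the schedule'' --- does not work. Whether the limit $\lim_{\beta<\delta}u_{b\restriction\beta}$ at a limit stage $\delta$ is $\emptyset$ is already determined by the choices made below $\delta$; it is not a request that can be discharged later, and no local rule for choosing among the non-initial children at successor stages can control it. (The paper's example $Sh([0,1],\tau_{sup})$ exhibits exactly a locally covering continuous tree in which every node is non-initial yet every branch terminates in the initial object; only the compatibility axiom rules this out, and it does so globally, not by a branch-by-branch selection.)

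The intended argument is a short proof by contradiction that applies clause 2 of Definition \ref{compatible} to a cotree whose branches have \emph{varying} lengths, rather than to uniform-height truncations. Suppose every branch becomes $\emptyset$ at some stage $<\kappa$, and cut each branch at the first such stage. The result is still a continuous $(E^{pb},\kappa)$-cotree: each remaining node keeps its full $E^{pb}$-family of predecessors, and each branch is continuous of length $<\kappa$. By compatibility the transfinite cocompositions of its branches form an extremal epimorphic family on $u_0$. But every such cocomposition is a map $\emptyset\to u_0$, which is a monomorphism by strictness of $\emptyset$, so $\emptyset\to u_0$ would be an extremal epimorphism and a monomorphism, hence an isomorphism, contradicting $u_0\neq\emptyset$. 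You should replace your distinguished-branch plan with this argument.
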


\begin{proof}
Let $u_0\xleftarrow{p_0}u_1\leftarrow \dots $ be an (at this point not necessarily continuous) $\kappa $-chain in $\mathcal{C}$. First we would like to understand what does it mean for the colimit of $\mathcal{C}(u_0,-)\xRightarrow{p_0^{\circ }} \mathcal{C}(u_1,-)\Rightarrow \dots $ to preserve a given extremal epimorphic family $(x_j\to y)_{j<\gamma }$ (with $\gamma \geq 1$). This is easy: it is mapped to a jointly surjective family iff for any map $u_{\alpha }\to y$ there's $u_{\beta }\to u_{\alpha }$ in the sequence such that for some $j<\gamma $ the dashed arrow 

\[\begin{tikzcd}
	{u_{\beta }} & {u_{\alpha  }} & y \\
	&& {x_0} & {x_j} & {}
	\arrow[from=2-3, to=1-3]
	\arrow[from=2-4, to=1-3]
	\arrow[from=1-1, to=1-2]
	\arrow[from=1-2, to=1-3]
	\arrow[curve={height=40pt}, dashed, from=1-1, to=2-4]
	\arrow["\dots"{description}, draw=none, from=2-3, to=2-4]
	\arrow["\dots"{description}, draw=none, from=2-4, to=2-5]
\end{tikzcd}\]
exists.

We shall also understand the $\gamma =0$ case. The empty collection of arrows is extremal epi on $y$ iff $y$ has no proper subobjects. As the initial object is strict, every map out of it is mono, hence this is the same as $y\cong \emptyset $. The colimit of the representables does not preserve the initial object iff one of the $u_{\alpha }$'s is initial, in which case all the following $u_{\beta }$'s are initial and hence the colimit is the terminal copresheaf which is allowed. So we assume that the prescribed families are non-empty.

One such issue can certainly be solved at any stage of the construction: if we have started to construct the chain (and we are in a successor step) $u_0\leftarrow \dots u_{\beta }$, then given $\alpha \leq \beta $ and a map $u_{\alpha }\to y\leftarrow x_{j}$ just take the pullback
\[\begin{tikzcd}
	{u_{\beta +1}} && {x_{j}} \\
	{u_{\beta }} & {u_{\alpha }} & y
	\arrow[from=1-3, to=2-3]
	\arrow[from=2-1, to=2-2]
	\arrow[from=2-2, to=2-3]
	\arrow[from=1-1, to=2-1]
	\arrow[""{name=0, anchor=center, inner sep=0}, from=1-1, to=1-3]
	\arrow["pb"{description, pos=0.6}, draw=none, from=0, to=2-2]
\end{tikzcd}\]
(for any $j$: hence we have $\gamma $-many options to continue the chain, these possibilities will form the cotree that we promised).

As our Hom-sets have size $\leq \kappa $ and we have $\kappa $-many prescribed families, there are $\kappa $-many tasks concerning a given $u\in \mathcal{C}$. But solving one such issue will yield a new $u_{\alpha }$ and hence $\kappa $-many new tasks. Luckily we have $\kappa $-many steps to arrange everything and this can be done:

For any $u\in \mathcal{C}$ let $T_u$ be the set (or list) of diagrams $u\to y\leftarrow x_{0}, x_1, \dots $ (where $(y\leftarrow x_j)_j$ is any member of $E$), well-ordered in order type $\kappa $ (technicality: assume $1\to 1$ is among the given families, so $T_u$ is non-empty. Allowing repetitions we can assume that $T_u$ has size $\kappa $). We have the canonical well-ordering $f:\kappa \times \kappa \to \kappa $ with the property $f(\alpha ,\beta )\geq \beta $. We use it as follows:

Let's fix a table of size $\kappa \times \kappa $ (the second coordinate labels the columns) whose entries are empty at the moment. As the root of the cotree is $u_0$ we list $T_{u_0}$ in the zeroth column: these tasks will have to be solved. Now, at step $1$ we solve $f^{-1}(0)$, whose second coordinate is $\leq 0$, so it is one of the already listed tasks. We do it by forming the pullback of the covering family; this yields the first level of the cotree

\[\begin{tikzcd}
	&& {u_{00}} \\
	{u_0} && {u_{0\lambda  }} \\
	&& {}
	\arrow[from=1-3, to=2-1]
	\arrow[from=2-3, to=2-1]
	\arrow["\dots"{description}, draw=none, from=1-3, to=2-3]
	\arrow["\dots"{description}, draw=none, from=2-3, to=3-3]
\end{tikzcd}\]
To construct the continuation from $u_{0\lambda }$ fill in $T_{u_{0,\lambda }}$ in the first column and solve $f^{-1}(1)$, whose second coordinate is $\leq 1$ hence we know which task it is.

The transfinite recursion is then the obvious thing: to define the $\alpha $-level of the cotree at limit $\alpha $ just take the transfinite cocomposition of the branches. Now do it for $\alpha +1$: to continue from a given vertex $u_{\dots }$ pick the corresponding table whose columns before $\alpha $ are filled in (with the tasks corresponding to the elements which are above $u_{\dots}$). Now fill $T_{u_{\dots }} $ into the $\alpha ^{\text{th}}$ column and solve task number $f^{-1}(\alpha )$.

This defines a continuous cofinal $(E^{pb} ,\kappa ^+)$-cotree. When we go along a branch we see that every task is solved, so the colimit preserves all the given extremal epimorphic families, except the empty union which we cancelled from the list. So every branch yields either the terminal copresheaf or an $E$-preserving one.

It remains to prove that when $u_0\neq \emptyset $ at least one branch yields a non-terminal copresheaf, i.e.~that it cannot happen that every branch is becoming constant $\emptyset $ at some point. But this is clear, otherwise (by cutting down each branch at that point) we would get a continuous $(E^{pb} ,\kappa )$-cotree which is not covering the root. 
\end{proof}

From this we derive the completeness theorem:

\begin{theorem}
\label{setcompleteness}
    Let $\mathcal{C}$ be $\kappa $-lex of local size $\leq \kappa $ with a strict initial object, and let $E$ be a set of extremal epimorphic families, such that it is compatible with $<\kappa $-limits and $|E|\leq \kappa $. Then given $x\in \mathcal{C}$ and $u,v$ subobjects of $x$, if for every $M:\mathcal{C}\to \mathbf{Set}$ $\kappa $-lex $E$-preserving functor we have $Mu \subseteq Mv$ then $u\subseteq v$.
\end{theorem}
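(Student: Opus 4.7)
The plan is to argue by contrapositive: assume $u\not\subseteq v$, so the pullback $w:=u\times_{x} v\hookrightarrow u$ is a proper monomorphism, and I must produce a $\kappa$-lex $E$-preserving $M:\mathcal{C}\to\mathbf{Set}$ with $Mu\not\subseteq Mv$. I may assume $u\not\cong\emptyset$, since otherwise $u\subseteq v$ holds trivially through the unique map out of the strict initial object. I would then apply Theorem \ref{completenesstree} with root $u_0=u$ to obtain a continuous cofinal $(E^{pb},\kappa^+)$-cotree, and extract $M$ from a suitably chosen branch.

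For any non-degenerate branch $u=u_0\leftarrow u_1\leftarrow\ldots$ of length $\kappa$ I set $M_b(-):=\mathrm{colim}_\beta\mathcal{C}(u_\beta,-)$. This is $E$-preserving by Theorem \ref{completenesstree}, and $\kappa$-lex because a colimit over a $\kappa$-chain is $\kappa$-filtered and thus commutes with $<\kappa$-limits in $\mathbf{Set}$. The class of $\mathrm{id}_u$ gives an element of $M_b u$ whose image in $M_b x$ is the class of $u\hookrightarrow x$. A short diagram chase (using that $v\hookrightarrow x$ is mono) shows this element lies in $M_b v\subseteq M_b x$ iff some transition $u_\beta\to u$ factors through $w\hookrightarrow u$. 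Call a branch \emph{good} if no such factorization occurs at any stage; a good branch cannot degenerate to $\emptyset$ (since $\emptyset\to u$ factors through $w$), and its $M_b$ satisfies $M_b u\not\subseteq M_b v$, as required.

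The core remaining task is to exhibit a good branch. I would proceed by contradiction: assume every branch is bad. For each branch $b$ let $\beta(b)<\kappa$ be the least stage where the factorization occurs, and let $F$ be the set of corresponding vertices. Then $F$ is an antichain in the cotree and meets every branch. Define $T'$ as the sub-diagram consisting of $F$ together with its strict ancestors. At any $u_\beta\in T'\setminus F$, every child of $u_\beta$ in the original cotree also lies in $T'$: it cannot be a strict descendant of an $F$-vertex (otherwise that $F$-vertex would be a strict ancestor of $u_\beta$ or equal to $u_\beta$, in either case violating either the antichain property or $u_\beta\notin F$), and a branch through it still meets $F$, which must therefore occur at or past the child itself. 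So the children of $u_\beta$ in $T'$ agree with those in the original cotree and form a member of $E^{pb}$; continuity at internal limits is inherited; and each $T'$-branch has length $\beta(b)+1<\kappa$. Hence $T'$ is a continuous $(E^{pb},\kappa)$-cotree with leaves $F$, and the compatibility condition (Definition \ref{compatible}) makes the family $\bigl(u_{\beta(b)}\to u\bigr)_b$ extremal epimorphic. But each of these arrows factors through the proper mono $w\hookrightarrow u$, forcing that mono to be an isomorphism --- contradicting $u\not\subseteq v$.

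The main obstacle is precisely the existence of a good branch. A naive greedy construction works without issue at successor stages: if $u_\beta$ is good, then the pullback of $w\hookrightarrow u$ along $u_\beta\to u$ is a proper subobject of $u_\beta$, so not every member of the $E^{pb}$-family at $u_\beta$ can factor through it, yielding a good child. What greedy construction cannot control is limit stages, where continuity forces the vertex and no freedom remains. The frontier argument sidesteps this by operating globally rather than branch-by-branch: instead of constructing a good branch inductively, one derives a contradiction from its non-existence by assembling the first-failure vertices into a covering cotree.
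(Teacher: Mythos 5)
Your proof is correct and follows essentially the same route as the paper's: both invoke Theorem \ref{completenesstree} with root $u$, observe that a branch separates $u$ from $v$ exactly when no transition $u_\beta\to u$ factors through $v\cap u$, and, in the degenerate case, cut every branch at its first factorization point to obtain a continuous $(E^{pb},\kappa)$-cotree whose cocompositions form an extremal epimorphic family factoring through the mono $v\cap u\hookrightarrow u$, forcing it to be an isomorphism. The only difference is organizational --- you argue contrapositively and spell out the frontier sub-cotree in more detail, while the paper runs the same cut-and-cover step directly to conclude $v\cap u=u$.
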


\begin{proof}
    Take the above cotree with root $u=u_0$. If a branch $h$ yields a $\kappa $-lex $E$-preserving functor $M_h=colim (\mathcal{C}(u,-)\xRightarrow{p^{\circ }}\mathcal{C}(u_{0,h(1)},-)\Rightarrow \dots )$ then by assumption $M_h(u)\subseteq M_h(v)$, in particular $1_u\in M_{h}(u)\subseteq M_{h}(x)$ lies in $M_h(v)$, meaning that for some $u_{\lambda}$ in the branch we have a lift
\[\begin{tikzcd}
	x & v \\
	u && {u_{\lambda}}
	\arrow[hook, from=2-1, to=1-1]
	\arrow[hook', from=1-2, to=1-1]
	\arrow[from=2-3, to=2-1]
	\arrow[dashed, from=2-3, to=1-2]
\end{tikzcd}\]
When the branch becomes eventually $\emptyset $ we also have this lifting. Cutting down each branch at such a point and using that the resulting $(E^{pb} ,\kappa )$-cotree is covering the root we have an extremal epimorphic family with liftings 

\[\begin{tikzcd}
	x & v \\
	u && {u_0} \\
	{} & {u_{\lambda }}
	\arrow[from=2-3, to=2-1]
	\arrow["\dots"{description}, draw=none, from=2-3, to=3-2]
	\arrow[from=3-2, to=2-1]
	\arrow["\dots"{description}, draw=none, from=3-2, to=3-1]
	\arrow[hook, from=2-1, to=1-1]
	\arrow[hook', from=1-2, to=1-1]
	\arrow[dashed, from=2-3, to=1-2]
	\arrow[dashed, from=3-2, to=1-2]
\end{tikzcd}\]
Hence each $u_{\lambda }\to u$ factors through $v\cap u$ and therefore $v\cap u =u$ so $u\subseteq v$.
\end{proof}

\begin{definition}
$\lambda =cf(\lambda )\geq \kappa =cf(\kappa )\geq \aleph _0$. A category $\mathcal{C}$ is \emph{$(\lambda ,\kappa )$-coherent} if
    \begin{itemize}
        \item[i)] it has $<\kappa $-limits,
        \item[ii)] it has pullback-stable effective epi - mono factorization,
        \item[iii)] it has pullback-stable $<\lambda $-unions,
        \item[iv)] and $<\lambda $ extremal epimorphic families are compatible with $<\kappa $-limits (pullback-stability follows from ii) and iii), so the additional requirement is the second clause of Definition \ref{compatible}: that locally covering continuous $(\lambda ,\kappa )$-cotrees are globally covering).
    \end{itemize}

A functor is \emph{$(\lambda ,\kappa )$-coherent} if it preserves $<\kappa $-limits, effective epimorphisms and $<\lambda $-unions.
\end{definition}

\begin{remark}
\label{numberofbranches}
We shall compute the number of branches of the tree in $iv)$.

Recall, that an infinite cardinal $\kappa $ has the tree property if given a tree of height $\kappa $ such that every level is of size $<\kappa $, it follows that the tree has a cofinal branch.

Since $\lambda $ is regular there are $<\lambda $ objects in each level below $\mu $, assuming $(<\lambda )^{<\mu }=(<\lambda )$. Since every branch is $<\kappa $ the tree has height $\leq \kappa $. Therefore if $\lambda >\kappa $ with $(<\lambda )^{\kappa }=(<\lambda )$ or if $\lambda =\kappa $ has the tree property and satisfies $(<\kappa )^{<\kappa }=(<\kappa )$, it follows that the tree has $<\lambda $ branches. The latter case is equivalent to $\kappa $ being weakly compact.
\end{remark}

\begin{remark}
    When $\kappa =\aleph _0$ iv) is redundant, i.e.~$(\aleph _0,\aleph _0)$-coherent is just coherent, $(\lambda ,\aleph _0)$-coherent is $\lambda $-geometric. Indeed, given a cotree as in iv) with root $u_0$ and an arbitrary proper subobject $v\subsetneqq u_0$ one of the predecessors $u_1$ of the root does not factor through $v$. That is, the pullback of $v$ along $u_0\leftarrow u_1$ is a proper subobject $v'\subsetneqq u_1$.
    Hence one of the predecessors of $u_1$ does not factor through $v'$, equivalently: through $v$. Since we cannot define an infinite branch we get stuck at a finite stage, meaning that the (co)composition of a branch does not factor through $v$.
\end{remark}

\begin{example}
\label{preshkappatopos}
    $\mathbf{Set}$ is $(\lambda ,\kappa )$-coherent for any $\lambda ,\kappa $. Given a set $X$ and a locally covering continuous cotree on it, we shall see that the transfinite cocomposition of the branches cover $X$. But any element $x_0\in X$ has a preimage in one of the predecessors $X_1$. Once we define a compatible family of preimages up to height $\alpha $, a limit ordinal, this sequence represents an element in the limit, so we've managed to find a preimage $x_{\alpha }=(x_i)_{i<\alpha } \in X_{\alpha }=lim _{i<\alpha } X_i$. This defines a branch whose cocomposition hits $x_0$.

    As in presheaf categories limits and colimits are pointwise, also $\mathbf{Set}^{\mathcal{A}}$ is $(\lambda ,\kappa )$-coherent for any $\lambda ,\kappa$.
\end{example}

\begin{definition}
    A $\lambda $-complete Boolean-algebra is \emph{$(2,\lambda)$-distributive} if for any collection of elements $(b_{i,0})_{i\in I}$, $(b_{i,1})_{i\in I}$ with $|I|<\lambda $, we have $\bigcap _i (b_{i,0} \cup b_{i,1})=\bigcup _{h:I\to 2} \bigcap _i b_{i,h(i)}$. In particular the union exists, though we do not require the existence of such big unions in general.
\end{definition}

\begin{example}
    ($\lambda \geq \kappa $ are infinite regular cardinals.) Every $\lambda $-complete $(2,\lambda )$-distribu\-tive Boolean-algebra is $(\lambda ,\kappa )$-coherent if $\kappa <\lambda $ and  $(<\lambda )^{< \kappa }=(<\lambda )$ or if $\kappa = \lambda$ is weakly compact. 
    
    Given a continuous, locally covering $(\lambda ,\kappa )$-cotree of elements
\[\begin{tikzcd}
	&&& 1 \\
	& {b_0} & {\dots } & {b_{\alpha }} & \dots \\
	{b_{0,0 }} & \dots & {b_{\alpha ,0}} & \dots
	\arrow[hook, from=2-2, to=1-4]
	\arrow[hook, from=2-4, to=1-4]
	\arrow[hook, from=3-1, to=2-2]
	\arrow[hook, from=3-2, to=2-2]
	\arrow[hook, from=2-3, to=1-4]
	\arrow[hook', from=2-5, to=1-4]
	\arrow[hook, from=3-3, to=2-4]
	\arrow[hook, from=3-4, to=2-4]
\end{tikzcd}\]
we shall see that intersections along the branches cover the root (which can be assumed to be $1$, otherwise just put its complement next to it and put $1$ to the $-1^{\text{th}}$ level). This means that for any $0\neq u\subseteq 1$ there's a branch whose intersection has non-empty intersection with $u$. 

By the cardinality assumptions there are $<\lambda $-many elements in the tree (there are $<\lambda $ elements in each level below $\kappa $, when $\kappa <\lambda $ their sum is $<\lambda $ by regularity, and when $\kappa =\lambda $ is weakly compact the tree has height $<\kappa $, so again, the sum is $<\kappa $). Let $A$ be the set of these elements. By $(2,\lambda )$-distributivity $1=\bigcap _{b\in A} (b\cup \neg b)=\bigcup _{\varepsilon :A\to \{+,-\}} \bigcap _{b\in A} b^{\varepsilon (b)}$. As $u\neq 0$ it has non-empty intersection with a summand (otherwise $\neg u$ would be a smaller upper bound). Let $u'$ be this intersection. Then it is atomic in the sense that for any $b\in A$ either $u'\subseteq b$ or $u'\subseteq \neg b$.

A member of the union is coming from decorating the tree with signs and then taking intersections of the elements or their complements (depending on the sign). If a decorated tree has non-empty intersection with $u'$ then $1$ has decoration $+$, given any element with decoration $+$ one of the predecessors is decorated with $+$ and given any chain of $+$-coloured elements the intersection must be $+$-coloured as well. So $u$ has non-empty intersection with the intersection of a decorated tree that has a positive branch.  
\end{example}

\begin{corollary}[of Theorem \ref{setcompleteness}]
    Let $\mathcal{L}$ be a distributive lattice which is a $(\lambda ,\kappa )$-coherent category and let $(A_i)_{i<\kappa }$ be a collection of subsets each of size $<\lambda $. Then there's an injective homomorphism of posets $\mathcal{L}\hookrightarrow \mathcal{P}(X)$ to a power set Boolean-algebra, which preserves all $<\kappa $-meets and which preserves $\bigcup A_i$ for each $i$.
\end{corollary}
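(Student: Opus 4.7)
The plan is to apply Theorem \ref{setcompleteness} to $\mathcal{L}$, viewed as a thin category (objects are elements, a unique arrow $a\to b$ whenever $a\leq b$). First I would check the standing hypotheses. Hom-sets have size $\leq 1\leq \kappa$, so local size is fine. The bottom $0$ is strictly initial since any arrow into $0$ witnesses $a\leq 0$, forcing $a=0$. Moreover $<\kappa$-limits in $\mathcal{L}$ are just $<\kappa$-meets (including the terminal object $1$, from the empty meet), which exist by $(\lambda,\kappa)$-coherence.

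Next I would choose the set $E$ of covers. For each $i<\kappa$, let $b_i:=\bigcup A_i$ (this exists by clause iii) of $(\lambda,\kappa)$-coherence), and let $\mathcal{U}_i:=(a\to b_i)_{a\in A_i}$. In a poset-as-category, a family $(a_j\to b)_{j}$ is extremal epimorphic iff $b=\bigvee_j a_j$, so each $\mathcal{U}_i$ lies in the relevant class. Setting $E:=\{\mathcal{U}_i : i<\kappa\}$ (possibly augmented by the trivial cover $1\to 1$ to satisfy the mild technicality in the proof of Theorem \ref{completenesstree}), we have $|E|\leq \kappa$, and compatibility of $E$ with $<\kappa$-limits is exactly clause iv) of $(\lambda,\kappa)$-coherence applied to these $<\lambda$-legged families.

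The key observation is that any $\kappa$-lex functor $M:\mathcal{L}\to \mathbf{Set}$ out of a poset-category is subsingleton-valued: because $a=a\wedge a$, preservation of this product under the diagonal forces $M(a)\xrightarrow{\sim}M(a)\times M(a)$, hence $|M(a)|\leq 1$. Since $M(1)=\{*\}$, such an $M$ is determined by the upward-closed subset $P_M:=\{a\in\mathcal{L} : M(a)=\{*\}\}$; being $\kappa$-lex it is closed under $<\kappa$-meets, and being $E$-preserving it satisfies the primality condition $b_i\in P_M\Rightarrow A_i\cap P_M\neq \emptyset$. Apply Theorem \ref{setcompleteness} with $x=1$: for every pair $u\not\leq v$ in $\mathcal{L}$ we obtain such an $M_{u,v}$ with $M(u)=\{*\}$ and $M(v)=\emptyset$. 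Let $X$ be the set of all such witnesses (indexed by ordered pairs), and define $\phi:\mathcal{L}\to \mathcal{P}(X)$ by $\phi(a):=\{M\in X : a\in P_M\}$.

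Finally I would verify $\phi$ has the required properties. Pointwise preservation of $<\kappa$-meets by each $M$ gives $\phi(\bigwedge a_j)=\bigcap \phi(a_j)$; the $E$-preserving property translates into $\phi(b_i)=\bigcup_{a\in A_i}\phi(a)$; and for injectivity, if $u\neq v$ then WLOG $u\not\leq v$ and $M_{u,v}$ separates them. I do not expect a serious obstacle here: once one recognises that $\kappa$-lex $E$-preserving functors out of a poset are precisely the $<\kappa$-complete $E$-prime filters, the theorem delivers the embedding almost verbatim; the only point requiring care is book-keeping the trivial/empty covers so that the hypotheses of Theorem \ref{setcompleteness} are literally met.
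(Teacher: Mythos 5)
Your proposal is correct and follows essentially the same route as the paper: apply Theorem \ref{setcompleteness} (with $x=1$, $E$ the prescribed join-covers $(a\to\bigcup A_i)_{a\in A_i}$ together with $1\to 1$) to separate each pair $u\not\leq v$, observe that $\kappa$-lex functors out of a poset are $2$-valued, and assemble the separating functors into an embedding $\mathcal{L}\hookrightarrow 2^X\cong\mathcal{P}(X)$. The paper's own proof is just a terser version of this, leaving implicit the verification of the hypotheses and the filter-theoretic reformulation that you spell out.
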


\begin{proof}
    If $a\neq b$ then either $a\cap b\subsetneqq b$ or $a\cap b \subsetneqq a$, hence they can be separated by an $\mathcal{L}\to \mathbf{Set}$ functor which preserves all $<\kappa $-meets, monos, the terminal object, as well as the prescribed unions. In particular it lands in $2\hookrightarrow \mathbf{Set}$. Putting these together yields the required homomorphism $\mathcal{L}\to 2^X$.
\end{proof}

\section{Classifying toposes}

\begin{definition}
    A category $\mathcal{E}$ is a \emph{$\kappa $-topos} if it is a Grothendieck-topos which is $(\infty ,\kappa )$-coherent. A geometric morphism $F_*:\mathcal{E}_1\to  \mathcal{E}_2$ is a map of $\kappa $-toposes if $F^*$ preserves $<\kappa $-limits. 
\end{definition}

\begin{remark}
    By the adjoint functor theorem a map of $\kappa $-toposes $\mathcal{E}_1\to \mathcal{E}_2$ is the same as an $\mathcal{E}_2\to \mathcal{E}_1$ functor preserving $<\kappa $-limits and all colimits. We write $Fun^*_{\kappa }(\mathcal{E}_2,\mathcal{E}_1)$ for the category of $\kappa $-lex cocontinuous functors and all natural transformations. (It is locally small.)
\end{remark}

\begin{example}
    By Example \ref{preshkappatopos} every presheaf topos is a $\kappa $-topos for any $\kappa $.
\end{example}

The next proposition is a partial converse to this:

\begin{proposition}
\label{prop34}
    Assume that $\mathcal{E}$ is a $\kappa $-topos for all $\kappa $ and the subobjects of $1$ form a generating set. Then $\mathcal{E}$ is a presheaf topos. 
\end{proposition}

\begin{proof}
    By the comparison lemma (\cite[Theorem C2.2.3]{elephant}) $\mathcal{E}\simeq Sh(Sub_{\mathcal{E}}(1))$. By assumption $Sub_{\mathcal{E}}(1)$ is an $(\infty ,\infty )$-coherent distributive lattice. Applying Theorem \ref{completenesstree} we get that every element of $Sub_{\mathcal{E}}(1)$ can be written as a union of join-irreducible elements. Let $X$ be the poset of join-irreducible elements in $Sub_{\mathcal{E}}(1)$. Applying the comparison lemma again, $\mathcal{E}$ is equivalent to sheaves on $X$ with the restricted union-topology. But that is the trivial topology and hence $\mathcal{E}\simeq \mathbf{Set}^{X^{op}}$. 
\end{proof}

\begin{question}
\label{quest35}
    Is it true that if $\mathcal{E}$ is a $\kappa $-topos for all $\kappa $ then $\mathcal{E}$ is a presheaf topos?
\end{question}

What we call a $\kappa $-topos here is the same as a $\kappa $-geometric topos from \cite{inflog}. In \cite{henry} the term "$\kappa $-topos" is used for $\kappa $-lex localizations of presheaf toposes. The following example connects these notions:

\begin{example}
    While every $\kappa $-topos (in our sense) is the $\kappa $-lex localization of a presheaf topos (see Proposition \ref{sheavesonksite}.(4)), the converse is false. (This claim appears in \cite{henry}, based on \cite[Remark 1.1.4]{inflog}.) 
    
    Consider the site $([0,1],\tau _{sup})$ where $[0,1]$ is seen as a poset with the usual ordering and $(r_i\leq x)_{i\in I}$ is a cover if $sup\{ r_i :i\in I \} = x $. $[0,1]$ has all limits (meets) given by infimum, and $\tau _{sup}$ is a Grothendieck-topology.

    We claim that the sheafification map $a:\mathbf{Set}^{[0,1]^{op}}\to Sh([0,1],\tau _{sup})$ preserves all limits. This follows as on any object $r\in [0,1]$ there are only two covering sieves: $[0,r)$ and $[0,r]$, hence the $+$-construction is given by a limit formula (see \cite[p.134]{sheaves}).

    However, $Sh([0,1],\tau _{sup})$ is not an $\aleph _1$-topos. We build a locally covering continuous cotree (inside $[0,1]$) on $1$ as follows: the root is $1$, given an object $r$ in the cotree, if $r=0$ then it has no predecessors, if $r>0$ then its predecessors are given by an $\omega $-sequence converging to $r$, formed by elements that are $<r$. At limit stages we take infimums. This defines a locally covering (the predecessors of any object form a $\tau _{sup}$-cover), continuous cotree such that every branch is countable (there is no uncountable strictly decreasing sequence in $[0,1]$), and every branch terminates in $0$. By the previous paragraph $[0,1]\xrightarrow{y}\mathbf{Set}^{[0,1]^{op}}\xrightarrow{a} Sh([0,1],\tau _{sup })$ takes infimums to limits ($y$ preserves all limits), it maps $\tau _{sup}$-covers to extremal epimorphic families, and in particular takes $0$ to the initial and $1$ to the terminal object. So if $Sh([0,1],\tau _{sup})$ is an $\aleph _1$-topos, then the branches of the $ay$-image of our tree form an extremal epimorphic family, therefore $\emptyset =*$. This is not the case as $\tau _{sup}$-covers are effective epimorphic, hence $ay$ is conservative (the representables are sheaves).
 \end{example}

\begin{remark}
\label{rem37}
    It is clear that every $\kappa $-topos is a $\kappa $-regular category (the transfinite cocomposition of a continuous $<\kappa $ chain of effective epis is an effective epi). The converse is false: we claim that $Sh([0,1],\tau _{\sup })$ is an $\aleph _1$-regular topos.

    First note that it is enough to consider chains indexed by $\omega ^{op}$. Indeed, if $\alpha $ is any countable ordinal then there is a cofinal map $\omega \hookrightarrow \alpha $, and since cones on $\alpha ^{op}$ are in bijective correspondence with cones on $\omega ^{op}$, taking the limit along this subsequence is the same as taking the limit of the original chain.

    Recall that in any Grothendieck-topos $Sh(\mathcal{C})$, a family $(F\Leftarrow F_i)_{i\in I}$ is (effective) epimorphic iff the following is satisfied: for any $x\in \mathcal{C}$ and $s\in F(x)$ there's a cover $(x\leftarrow x_j)_{j\in J}$ such that each $\restr{s}{x_j}$ is the image of some $s'\in F_{i(j)}(x_j)$.

    So let $F_0\xtwoheadleftarrow{f_0} F_1\xtwoheadleftarrow{f_1} \dots $ be an $\omega ^{op}$-chain of effective epis, write $(f_{\omega ,i}:F_{\omega }\to F_i)_i$ for the limit cone and take $s\in F_0(p_0)$ for some $p_0\in [0,1]$. We have to show that $p_0$ admits a $\tau _{sup}$-cover, such that the restrictions of $s$ have preimages along the respective components of $f_{\omega ,0}$.

    Since $f_0$ is epi we get a cover $(p_{0,i_1}\leq p_0)_{i_1\in I_1}$ such that $\restr{s}{p_{0,i_1}}$ has a preimage along $(f_{0})_{p_{0,i_1}}$. For fixed $i_1$ let $s^{0,i_1}\in F_1(p_{0,i_1})$ be such a preimage. By iterating this we obtain a covering tree of height $\omega $. Given any branch $\vec{i}=0,i_1,i_2,\dots $ write $p_{\vec{i}}=\bigcap \{p_0,p_{0,i_1},\dots  \}$. Then the compatible family $(\restr{s}{p_{\vec{i}}},\restr{s^{0,i_1}}{p_{\vec{i}}}, \dots )$ gives a preimage of $\restr{s}{p_{\vec{i}}}$ along $(f_{\omega ,0})_{p_{\vec{i}}}$.

    So it is enough to show that given a locally $\tau _{sup}$-covering tree of height $\omega $ in $[0,1]$ with root $p$, the intersections of the branches can get arbitrarily close to $p$. This is easy. For any fixed $\varepsilon >0$ there is an element $p_1$ in the first level of the tree that is strictly above $p-\varepsilon $. Similarly one of the predecessors of $p_1$ is strictly above $p-\varepsilon $. The infimum of the elements in the resulting branch $\bigcap \{p,p_1,p_2,\dots \}$ is $\geq p-\varepsilon $.  
\end{remark}

\begin{definition}
    We fix the notation: Let $\mathcal{C}$ be a small $\kappa $-lex category and $E$ be a set of arbitrary families in $\mathcal{C}$ (a family is a set of arrows with common codomain). By $E^{pb}$ we denote the closure of $E$ under pullbacks; those families which can be obtained as a pullback of an $E$-family. $E^{tree_{\kappa }}$ is the closure of $E$ under the $\kappa $-tree operation: the set of those families which are obtained as transfinite cocompositions of continuous $(E,\kappa )$-cotrees (the predecessors on any node form an $E$-family, every branch is continuous, $<\kappa $). We write $\langle E\rangle _{\kappa }$ for $(E^{pb})^{tree _{\kappa }}$.

    A (small) \emph{$\kappa $-site} is a pair $(\mathcal{C},E)$ where $\mathcal{C}$ is $\kappa $-lex (small) and $E$ is an arbitrary collection of families containing $id:1\to 1$.
    $F:(\mathcal{C},E)\to (\mathcal{D},E')$ is a \emph{morphism of $\kappa $-sites } if $F$ is a $\kappa $-lex $\mathcal{C}\to \mathcal{D}$ functor which takes $E$-families (and hence $\langle E\rangle _{\kappa }$-families) to $\langle E'\rangle _{\kappa }$-families.
\end{definition}

\begin{remark}
    What we call a $\kappa $-site is in fact a sketch. As such, $\kappa $-sites correspond to theories in $L_{\infty \kappa }^g$, see \cite[Theorem 3.2.1, Proposition 3.2.5]{accessible}.
\end{remark}

\begin{remark}
    Usually a site is a category $\mathcal{C}$ equipped with a Grothendieck-topology (and sometimes $\mathcal{C}$ is assumed to be lex). So it would make sense to define a $\kappa $-site as a $\kappa $-lex category equipped with a $\kappa $-topology (that is: a Grothendieck-topology which is closed under the $\kappa $-tree operation). But since in practice topologies often arise from a generating set of families, and since in most of our results the size of this generating set matters, we found it notation-wise more convenient to use the definition given above.
\end{remark}

\begin{proposition}
\label{sheavesonksite}
    Let $(\mathcal{C},E)$ be a small $\kappa $-site. Then:
    \begin{enumerate}
        \item $\langle E\rangle _{\kappa }$ is closed under pullbacks and the $\kappa $-tree operation.
        \item $\langle E\rangle _{\kappa }$ is a Grothendieck-topology.
        \item $Sh(\mathcal{C}, \langle E\rangle _{\kappa })$ is a $\kappa $-topos.
        \item The full inclusion $Sh(\mathcal{C},\langle E\rangle _{\kappa } )\hookrightarrow \mathbf{Set}^{\mathcal{C}^{op}}$ is a map of $\kappa $-toposes, i.e.~the sheafification functor $a$ is $\kappa $-lex.
        \item The Yoneda-embedding followed by sheafification $ay:\mathcal{C}\xrightarrow{y} Set^{\mathcal{C}^{op}}\xrightarrow{a} Sh(\mathcal{C},\langle E\rangle _{\kappa } )$ is $\kappa $-lex, $E$-preserving (maps $E$-families to extremal epimorphic ones).
    \end{enumerate}
\end{proposition}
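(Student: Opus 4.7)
The statement has five parts, and my plan is to prove (1) directly, deduce (2) from it, tackle (4) as the main obstacle, and derive (3) and (5) from (4).

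For (1), closure of $\langle E\rangle _{\kappa }$ under pullbacks follows by pulling back the defining continuous $(E^{pb},\kappa )$-cotree vertex by vertex: the new local covers are pullbacks of $E^{pb}$-families, hence still in $E^{pb}$; continuity is preserved because $\mathcal{C}$ is $\kappa $-lex and $<\kappa $-limits commute with pullback; branch lengths are unchanged. Closure under the tree operation is a grafting argument: an $(\langle E\rangle _{\kappa },\kappa )$-cotree has, at each vertex, a $(E^{pb},\kappa )$-subtree witnessing the local cover, and stacking these subtrees yields a single $(E^{pb},\kappa )$-cotree. A branch of the stacked cotree is an ordinal sum of $<\kappa $ ordinals each $<\kappa $, which remains $<\kappa $ by regularity of $\kappa $; continuity at the new limit ordinals uses the $\kappa $-lex property at the joints. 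For (2), only the identity axiom remains to check: $\{id_X\}$ is the pullback of $id:1\to 1\in E$ along $X\to 1$, hence in $E^{pb}$. Pullback-stability and transitivity of $\langle E\rangle _{\kappa }$ are precisely (1).

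The main obstacle is (4). I would compute sheafification via the plus construction, where $P^+(X)$ is a colimit, indexed by the poset $J(X)$ of $\langle E\rangle _{\kappa }$-covering sieves of $X$ (ordered by reverse inclusion), of matching-family sets. The crux is the claim that $J(X)$ is $\kappa $-filtered: given $<\kappa $ covering sieves $(S_{\alpha })_{\alpha <\delta }$, I construct a common refinement by transfinite iteration. Begin with a generating $(E^{pb},\kappa )$-cotree for $S_0$; at each vertex of the current cotree pull back and tree-compose with $S_{\alpha +1}$; at limit stages take the cocomposed joints. By (1) the accumulated cotree lies in $\langle E\rangle _{\kappa }$ throughout, and the final cotree refines every $S_{\alpha }$; its branch lengths are ordinal sums of $<\delta $ ordinals each $<\kappa $, hence $<\kappa $ by regularity. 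Since matching-family sets commute with limits in $P$ (they are limits themselves) and $\kappa $-filtered colimits in $\mathbf{Set}$ commute with $<\kappa $-limits, $(-)^+$ is $\kappa $-lex, hence so is $a=(-)^{++}$.

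For (3), $Sh(\mathcal{C},\langle E\rangle _{\kappa })$ is a Grothendieck topos by (2), so clauses (i)--(iii) of the $(\infty ,\kappa )$-coherent definition hold automatically. Clause (iv) needs more care: I would use that $ay(\mathcal{C})$ is an extremal generating family in $Sh$, so a locally covering $(\infty ,\kappa )$-cotree in $Sh$ can be refined by one built from $ay$-images of objects of $\mathcal{C}$; combined with $\kappa $-lexness of $a$ (from (4)) and the fact that $\mathbf{Set}^{\mathcal{C}^{op}}$ is a $\kappa $-topos (Example \ref{preshkappatopos}), the cocomposition is seen to be globally covering. Part (5) is then quick: $y$ preserves all limits and $a$ is $\kappa $-lex, so $ay$ is $\kappa $-lex; and any $E$-family generates a covering sieve in $\langle E\rangle _{\kappa }$, so its $ay$-image is covering in $Sh$, which in a Grothendieck topos is the same as extremal epimorphic.
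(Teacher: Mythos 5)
Parts (1), (2), (4) and (5) of your proposal are correct and follow essentially the same route as the paper: the vertex-by-vertex pullback and grafting argument for (1), the identity-as-pullback-of-$id_1$ observation for (2), the $\kappa$-filteredness of the poset of covering sieves via iterated refinement for (4) (this is exactly the paper's tree-of-pullbacks construction), and the pullback/sieve argument for (5).

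The genuine gap is in (3). You correctly identify clause (iv) of $(\infty,\kappa)$-coherence as the real content, but the sentence ``a locally covering $(\infty,\kappa)$-cotree in $Sh$ can be refined by one built from $ay$-images of objects of $\mathcal{C}$; combined with $\kappa$-lexness of $a$ and the fact that $\mathbf{Set}^{\mathcal{C}^{op}}$ is a $\kappa$-topos, the cocomposition is seen to be globally covering'' asserts the conclusion rather than proving it. Two things are missing. First, the appeal to Example \ref{preshkappatopos} cannot work as stated: the local covers of the cotree are extremal epimorphic in $Sh(\mathcal{C},\langle E\rangle_{\kappa})$ but not in $\mathbf{Set}^{\mathcal{C}^{op}}$ (that is the whole point of sheafifying), so the cotree cannot be transported to the presheaf topos. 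Second, the actual refinement must be carried out by a transfinite element-chase: fix $x_0\in\mathcal{C}$ and a section $s^0\in F_0(x_0)$ of the root, and build a continuous $(E^{pb},\kappa)$-cotree on $x_0$ in $\mathcal{C}$ \emph{together with a morphism of cotrees} into the given cotree of sheaves, so that the restriction of $s^0$ to each node has a chosen preimage in the corresponding sheaf. The successor step is the local lifting you describe, but the critical step is the limit stage of a branch: there one must produce a preimage of $s^0|_{x_{0,j_1,\dots}}$ in the limit sheaf, and this requires assembling the compatible family of previously chosen preimages and using that limits of sheaves are computed pointwise. Without this limit-stage argument (and the cotree-morphism bookkeeping that makes ``compatible family'' meaningful), branches of length up to $\kappa$ cannot be continued, and the refinement collapses. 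Finally, one must then invoke your part (1) to recognize the resulting cotree on $x_0$ as a $\langle E\rangle_{\kappa}$-cover; this last point you would get for free, but the limit-stage construction is a missing idea, not a routine detail.
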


\begin{proof}
    \begin{enumerate}
        \item The pullback of a continuous $(E^{pb},\kappa )$-cotree is again a continuous $(E^{pb},\kappa )$-cotree, as the pullback of a pulled back $E$-family is the pullback of the original $E$-family along the composite and the pullback of a transfinite cocomposition is the same as the transfinite cocomposition of the pullbacks. Closed under the tree operation: the cotree we build can be seen as pasting some $(E^{pb},\kappa)$-cotrees together, which will be a continuous $(E^{pb},\kappa )$-cotree by the regularity of $\kappa $, and as the (transfinite) cocomposition of the transfinite cocompositions is a transfinite cocomposition.
        \item It contains the isomorphisms as those are pullbacks of $id:1\to 1$. The rest of the requirements (being closed under pullbacks and being closed under the tree operation when one is building cotrees of height 2) is implied by Claim 1.
        \item The argument will be similar to the one in Remark \ref{rem37}.
        
        Take a locally covering continuous $(\infty ,\kappa )$-cotree (i.e.~every node is covered by an arbitrarily large extremal epimorphic family, each branch is $<\kappa $ and continuous), let $F_0$ be the root and take an $x_0\in \mathcal{C}$ and $s^{0}\in F_0(x_0)$. Then $x_0$ has a cover $(x_0\leftarrow x_{0,j_1})_{j_1}$, such that the restrictions of $s^0$ are coming from some $F_i$'s (there can be many, so fix suitable $i(0,j_{1})$'s and preimages $s^{0,j_1}\in F_{i(0,j_1)}(x_{0,j_1})$). Now each $x_{0,j_1}$ has a cover $(x_{0,j_1}\leftarrow x_{0,j_1,j_2})_{j_2}$ such that the restrictions of $s^{0,j_1}$'s are coming from some predecessors of $F_{i(0,j_1)}$. So we started to build a locally covering continuous $((E^{pb})^{tree},\kappa )$-cotree on $x_0$, together with a morphism $i$ of cotrees, from the cotree on $x_0$ to the one on $F_0$ (meaning: $i(0,j_1,\dots )$ is an initial segment of $i(0,j_1,\dots ,j_k)$). It has the property that $\restr{s^0}{x_{\vec{j}}}$ has a preimage in $F_{i(\vec{j})}$.
        
        Once we are at a limit stage, for any branch $x_0\leftarrow x_{0,j_1}\leftarrow x_{0,j_1,j_2}\leftarrow \dots $ its limit $x_{0,j_1,\dots }$ has the property that $\restr{s^0}{x_{0,j_1,\dots }}$ has a preimage in $F_{i(0,j_1,\dots ) }(x_{0,j_1,\dots})$, namely the compatible family $[\restr{s^0}{x_{0,j_1,\dots }}, \restr{s^{0,j_1}}{x_{0,j_1,\dots }},\dots ]$ (which is in the limit of $F_0\Leftarrow F_{i(0,j_1)}\Leftarrow \dots $ because limits of sheaves are pointwise).

        So we managed to define a locally covering continuous $((E^{pb})^{tree},\kappa )$-cotree (every branch has length $<\kappa $ because the same holds in the cotree of sheaves and $i$ is a map of cotrees), and the restriction of $s^0$ to the transfinite cocomposition of a branch $x_{\vec{j}}$ has a preimage in the transfinite cocomposition of $F_{i(\vec{j})}$. This family on $x_0$ is a cover by Claim 1. 
        \item The poset of coveres on an object $x\in \mathcal{C}$ (ordered by refinement) is $\kappa $-filtered (one can build a tree whose first level consists of the $0^{\text{th}}$ family and whose $\alpha +1^{\text{th}}$ level is formed by pulling back the $\alpha ^{\text{th}}$ family to the leafs). Hence the $+$-construction is $\kappa $-lex.
        \item The composite is $\kappa $-lex by Claim 4. It is true for any $\omega $-site that $ay$ takes covering families to epimorphic ones, see e.g.~\cite[Corollary III.7.7]{sheaves}. 
    \end{enumerate}
\end{proof}

\begin{corollary}
    Let $(\mathcal{C},E)$ and $(\mathcal{D},E')$ be small $\kappa $-sites and $F:(\mathcal{C},E)\to (\mathcal{D},E')$ be a morphism of $\kappa $-sites. Then there is an induced morphism of $\kappa $-toposes $F_*:Sh(\mathcal{D},\langle E'\rangle _{\kappa })\to Sh(\mathcal{C},\langle E\rangle _{\kappa })$ where $F_*$ is $-\circ F^{op}$ and $F^*$ is $Sh(\mathcal{C})\hookrightarrow \mathbf{Set}^{\mathcal{C}^{op}}\xrightarrow{Lan_{F^{op}}}\mathbf{Set}^{\mathcal{D}^{op}}\xrightarrow{a}Sh(\mathcal{D})$.
\end{corollary}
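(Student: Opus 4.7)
The statement breaks into three parts: (i) $F_* := -\circ F^{op}$ restricts to a functor $Sh(\mathcal{D}, \langle E' \rangle_\kappa) \to Sh(\mathcal{C}, \langle E \rangle_\kappa)$; (ii) the composite $F^* := a \circ Lan_{F^{op}} \circ i$ is left adjoint to $F_*$; (iii) $F^*$ preserves $<\kappa$-limits.

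For (i), I would first check that $F$ sends $\langle E\rangle_\kappa$-covers to $\langle E'\rangle_\kappa$-covers. By hypothesis $F$ sends $E$-families to $\langle E'\rangle_\kappa$-families, and being $\kappa$-lex it commutes with pullbacks and with continuous transfinite cocompositions of $<\kappa$-chains (which are $<\kappa$-limits), so it respects both closure operations $(-)^{pb}$ and $(-)^{tree_\kappa}$ from Definition 3.5. Given an $\langle E'\rangle_\kappa$-sheaf $Q$ and an $\langle E\rangle_\kappa$-cover $(c_i\to c)_i$, the sheaf condition for $F_*Q = Q\circ F^{op}$ on this cover reduces to the sheaf condition for $Q$ on $(Fc_i\to Fc)_i$, using $F(c_i\times_c c_j) = Fc_i\times_{Fc} Fc_j$.

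For (ii), one chains the standard adjunctions $Lan_{F^{op}}\dashv -\circ F^{op}$ (between presheaf categories) and $a\dashv i$ (sheafification): for $P\in Sh(\mathcal{C})$, $Q\in Sh(\mathcal{D})$,
\[Sh(\mathcal{D})(F^*P,Q) \cong \mathbf{Set}^{\mathcal{D}^{op}}(Lan_{F^{op}}(iP), iQ) \cong \mathbf{Set}^{\mathcal{C}^{op}}(iP, iQ\circ F^{op}) \cong Sh(\mathcal{C})(P, F_*Q),\]
using $iQ\circ F^{op} = i(F_*Q)$ from (i).

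For (iii) --- the main obstacle --- the inclusion $i$ preserves all limits as a right adjoint and $a$ is $\kappa$-lex by Claim 4 of Proposition 3.6, so it suffices to show that $Lan_{F^{op}}:\mathbf{Set}^{\mathcal{C}^{op}}\to \mathbf{Set}^{\mathcal{D}^{op}}$ is $\kappa$-lex. Pointwise it is the coend $Lan_{F^{op}}(P)(d) = colim_{(c,f)\in (d\downarrow F)^{op}} P(c)$, a colimit over the opposite of the comma category $(d\downarrow F)$ whose objects are pairs $(c, f: d\to Fc)$. The heart of the argument is that $(d\downarrow F)$ is $\kappa$-cofiltered: any $<\kappa$-sized family of objects $(c_j, f_j)_{j\in J}$ admits a cone via $(\prod_j c_j, (f_j)_j : d\to F(\prod_j c_j) = \prod_j Fc_j)$, using $\mathcal{C}$ being $\kappa$-lex and $F$ preserving products, and similarly for equalizers of $<\kappa$-families. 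Consequently $(d\downarrow F)^{op}$ is $\kappa$-filtered, the pointwise colimit commutes with $<\kappa$-limits of sets, and since limits in presheaves are computed pointwise, $Lan_{F^{op}}$ is $\kappa$-lex, completing the proof.
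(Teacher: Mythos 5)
Your proposal is correct and follows essentially the same route as the paper: the paper's proof simply cites \cite[Corollary C2.3.4]{elephant} for the existence of the geometric morphism (your parts (i) and (ii)), \cite[Example A4.1.10]{elephant} for the $\kappa$-lexness of $Lan_{F^{op}}$ (your $\kappa$-cofiltered comma-category argument in (iii)), and Claim 4 of the preceding proposition for the $\kappa$-lexness of $a$. You have merely unpacked those citations, and the details you supply (in particular the cone and equalizer constructions showing $(d\downarrow F)$ is $\kappa$-cofiltered) are accurate.
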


\begin{proof}
    The geometric morphism exists by \cite[Corollary C2.3.4]{elephant}, $Lan_{F^{op}}$ is $\kappa $-lex by \cite[Example A4.1.10]{elephant} and $a$ is $\kappa $-lex by Claim 4.
\end{proof}

\begin{remark}
\label{rem313}
    Every $\kappa $-topos is the category of sheaves on a $\kappa $-site and every morphism of $\kappa $-toposes is induced by a morphism of $\kappa $-sites. Indeed, let $\mathcal{E}$ be a $\kappa $-topos and let $\mathcal{C}$ be any small full subcategory which contains a set of generators and which is closed under $<\kappa $-limits. By \cite[Theorem C2.2.3]{elephant} (the inverse image part of) the induced geometric morphism $Sh(\mathcal{C},\tau _{can})\to \mathcal{E}$ is an equivalence (here $\tau _{can}$ is the set of effective epimorphic families). Since $\mathcal{E}$ is a $\kappa $-topos, its canonical topology is a $\kappa $-topology, hence $\tau _{can}=\langle \tau _{can}\rangle _{\kappa }$. The other part of the claim is proved similarly: if $F^*:\mathcal{E}\to \mathcal{E}'$ is a map of $\kappa $-toposes and $\mathcal{E}\simeq Sh(\mathcal{C},\tau _{can})$ as before, then let $\mathcal{D}$ be a small full subcategory of $\mathcal{E}'$ which is closed under $<\kappa $ limits, contains a set of generators and which contains the image of $\restr{F^*}{\mathcal{C}}$. Then $\restr{F^*}{\mathcal{C}}: (\mathcal{C},\tau _{can})\to (\mathcal{D},\tau _{can})$ is a morphism of $\kappa $-sites, and the induced geometric morphism is $F$.
\end{remark}

\begin{definition}
     Given a small $\kappa $-site $(\mathcal{C},E)$, we write $Mod(\mathcal{C})=Mod(\mathcal{C},E,\kappa )$ for the category of $\kappa $-lex $E$-preserving $\mathcal{C}\to \mathbf{Set}$ functors and natural transformations. $Mod(\mathcal{C})_{<\mu }$ denotes the full subcategory of models with pointwise cardinality $<\mu $. Similarly, for a $\kappa $-topos $\mathcal{E}$ we write $Mod_{\mathcal{E}}(\mathcal{C})=Mod_{\mathcal{E}}(\mathcal{C},E,\kappa )$ for the category of $\mathcal{C}\to \mathcal{E}$ $\kappa $-lex $E$-preserving functors.
\end{definition}

\begin{theorem}
\label{classiftopos}
    Let $(\mathcal{C},E)$ be a small $\kappa $-site, $\mathcal{E}$ be a $\kappa $-topos and $M:\mathcal{C}\to \mathcal{E}$ be a $\kappa $-lex $E$-preserving functor. Then
    \begin{enumerate}
        \item In the left Kan-extension
\[\begin{tikzcd}
	{\mathcal{C}} && {\mathcal{E}} \\
	\\
	{Sh(\mathcal{C},\langle E\rangle _{\kappa })}
	\arrow[""{name=0, anchor=center, inner sep=0}, "M", from=1-1, to=1-3]
	\arrow["ay"', from=1-1, to=3-1]
	\arrow["{Lan_{ay}M}"', from=3-1, to=1-3]
	\arrow["\eta "', shorten <=14pt, shorten >=14pt, Rightarrow, from=0, to=3-1]
\end{tikzcd}\]
$\eta $ is an isomorphism, and $Lan_{ay}M$ is $\kappa $-lex cocontinuous.
    \item $Lan_{ay}:Mod_{\mathcal{E}}(\mathcal{C},E,\kappa )\to Fun^*_{\kappa }(Sh(\mathcal{C},\langle E\rangle _{\kappa }), \mathcal{E})$ is an equivalence of categories, whose quasi-inverse is precomposing with $ay$.
    \end{enumerate}
\end{theorem}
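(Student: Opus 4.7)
The plan is to construct $\text{Lan}_{ay}M$ by first extending $M$ to the presheaf topos and then factoring through sheafification. Form $L := \text{Lan}_y M : \mathbf{Set}^{\mathcal{C}^{op}} \to \mathcal{E}$, which exists since $\mathcal{C}$ is small and $\mathcal{E}$ is cocomplete; $L$ is cocontinuous by general principles, and $\kappa$-lex by \cite[Example A4.1.10]{elephant} since $\mathcal{C}$ and $M$ are $\kappa$-lex. Thus $L$ is already a $\kappa$-lex cocontinuous functor from the presheaf topos to $\mathcal{E}$.

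Next, I would show $L$ factors through the sheafification $a:\mathbf{Set}^{\mathcal{C}^{op}}\to Sh(\mathcal{C},\langle E\rangle_\kappa)$. The crucial fact is that $M$ sends every $\langle E\rangle_\kappa$-family to an extremal epimorphic family in $\mathcal{E}$: because $M$ preserves pullbacks and $<\kappa$-limits, it converts a continuous $(E^{pb},\kappa)$-cotree in $\mathcal{C}$ into a continuous $(\mathrm{extremal\ epi},\kappa)$-cotree in $\mathcal{E}$, which is globally covering since $\mathcal{E}$ is $(\infty,\kappa)$-coherent. Since $L$ is cocontinuous and so preserves images, this means $L$ sends each covering sieve inclusion $S\hookrightarrow yc$ to an iso, hence inverts every unit $F\to iaF$ of the adjunction $a\dashv i$. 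One therefore obtains a canonical factorisation $L\cong \tilde M\circ a$ with $\tilde M\cong L\circ i$. This $\tilde M$ is $\kappa$-lex (the inclusion $i$ preserves all limits) and cocontinuous (colimits in $Sh$ are $a$ of colimits in the presheaf topos). Moreover $\tilde M\circ ay=L\circ i\circ a\circ y\cong L\circ y=M$, because $L$ inverts $yc\to iay(c)$; the resulting iso is $\eta$, and by the universal property of left Kan extension $\tilde M\cong \text{Lan}_{ay}M$, proving part (1).

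Part (2) is then a formal consequence. The functor $-\circ ay$ lands in $Mod_\mathcal{E}(\mathcal{C},E,\kappa)$: $ay$ is $\kappa$-lex and $E$-preserving by the preceding Proposition, and any $\kappa$-lex cocontinuous $F:Sh\to\mathcal{E}$ preserves these properties (cocontinuous functors preserve extremal epimorphic families). The two isos required for the equivalence are $(\text{Lan}_{ay}M)\circ ay\cong M$ (which is $\eta$ from part (1)) and $\text{Lan}_{ay}(F\circ ay)\cong F$ (by uniqueness of cocontinuous extension, since every sheaf is canonically a colimit of objects of the form $ay(c)$, obtained by sheafifying the canonical presentation of the underlying presheaf).

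I expect the main obstacle to lie in the middle step: verifying that $L$ factors through sheafification, which is where the $(\infty,\kappa)$-coherence of $\mathcal{E}$ is essential. Without it one could preserve only $E$- and $E^{pb}$-families, whereas $\langle E\rangle_\kappa$ requires the full $\kappa$-tree operation. Once this is in hand, the remainder is a routine transcription of the classical classifying topos argument to the $\kappa$-lex setting.
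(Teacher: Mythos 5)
Your argument is correct and shares its backbone with the paper's proof: both factor $\mathrm{Lan}_{ay}M$ as $\mathrm{Lan}_yM\circ i$ through the presheaf topos, and both hinge on the observation that a $\kappa$-lex $E$-preserving $M$ into an $(\infty,\kappa)$-coherent $\mathcal{E}$ automatically preserves all of $\langle E\rangle_\kappa$ (you spell this out via the image of a continuous $(E^{pb},\kappa)$-cotree; the paper compresses exactly this into the parenthetical ``easy''). Where you diverge is in how the two halves are closed off. For the factorization through $Sh(\mathcal{C},\langle E\rangle_\kappa)$ you work on the left-adjoint side --- $\mathrm{Lan}_yM$ inverts covering sieves, hence all local isomorphisms, hence the units of $a\dashv i$ --- whereas the paper works on the right-adjoint side, showing $\mathcal{E}(M(-),\bullet)$ lands in sheaves; these are equivalent, but your route silently invokes the standard lemma that a lex cocontinuous functor inverting covering-sieve monos inverts every unit $F\to iaF$, and you do not verify that $L\circ i$ actually satisfies the universal property of $\mathrm{Lan}_{ay}M$ (the paper does this explicitly with $\widetilde{\gamma}$ and $\varepsilon_2$). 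For part (2) the paper sets up the adjunction $\mathrm{Lan}_{ay}\dashv(ay)^*$, factors $(ay)^*$ as $y^*\circ a^*$, and characterizes the essential image of $a^*$ as those $N^*$ whose right adjoint factors through $i$; you instead exhibit the unit and counit isomorphisms directly, with the counit $\mathrm{Lan}_{ay}(F\circ ay)\cong F$ resting on the density of sheafified representables. Your version is shorter; the paper's buys an explicit description of which cocontinuous functors out of the presheaf topos descend, which is the form in which the statement gets reused later. Neither difference is a gap, but both of your shortcuts deserve at least a sentence of justification.
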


\begin{proof}
    The above triangle can be written as
\[\begin{tikzcd}
	{\mathcal{C}} &&&& {\mathcal{E}} \\
	\\
	{\mathbf{Set}^{\mathcal{C}^{op}}} && {\mathbf{Set}^{\mathcal{C}^{op}}} \\
	\\
	{Sh(\mathcal{C},\langle E\rangle _{\kappa })}
	\arrow[""{name=0, anchor=center, inner sep=0}, "M", from=1-1, to=1-5]
	\arrow["y"', from=1-1, to=3-1]
	\arrow[""{name=1, anchor=center, inner sep=0}, Rightarrow, no head, from=3-1, to=3-3]
	\arrow["{Lan_yM}"{description}, from=3-3, to=1-5]
	\arrow["a"', from=3-1, to=5-1]
	\arrow["i"', from=5-1, to=3-3]
	\arrow["{\eta _1}"', shorten <=15pt, shorten >=15pt, Rightarrow, from=0, to=1]
	\arrow["{\eta _2}"', shorten <=14pt, shorten >=14pt, Rightarrow, from=1, to=5-1]
\end{tikzcd}\]

(To check the universal property: given $F:Sh(\mathcal{C})\to \mathcal{E}$ and a natural transformation $\gamma :M\Rightarrow Fay$, there's an induced natural transformation $\widetilde{\gamma }:Lan_yM\Rightarrow Fa$ fitting in the picture:

\[\begin{tikzcd}
	& {\mathcal{C}} && {\mathcal{E}} \\
	{\gamma =} \\
	& {\mathbf{Set}^{\mathcal{C}^{op}}} && {\mathbf{Set}^{\mathcal{C}^{op}}} && {Sh(\mathcal{C},\langle E\rangle _{\kappa })} \\
	{id=} \\
	& {Sh(\mathcal{C},\langle E\rangle _{\kappa })}
	\arrow[""{name=0, anchor=center, inner sep=0}, "M", from=1-2, to=1-4]
	\arrow["y"', from=1-2, to=3-2]
	\arrow[""{name=1, anchor=center, inner sep=0}, Rightarrow, no head, from=3-2, to=3-4]
	\arrow["a"', from=3-2, to=5-2]
	\arrow["i"{description}, from=5-2, to=3-4]
	\arrow["a"{description}, from=3-4, to=3-6]
	\arrow["F"', from=3-6, to=1-4]
	\arrow[""{name=2, anchor=center, inner sep=0}, "{Lan_yM}"{description}, from=3-4, to=1-4]
	\arrow[""{name=3, anchor=center, inner sep=0}, Rightarrow, no head, from=5-2, to=3-6]
	\arrow["{\eta _2}"', shorten <=14pt, shorten >=14pt, Rightarrow, from=1, to=5-2]
	\arrow["{\eta _1}"', shorten <=13pt, shorten >=13pt, Rightarrow, from=0, to=1]
	\arrow["{\widetilde{\gamma }}", shift right=3, shorten <=18pt, shorten >=12pt, Rightarrow, from=2, to=3-6]
	\arrow["{\varepsilon _2}","{\cong}"', shorten <=3pt, shorten >=3pt, Rightarrow, from=3-4, to=3]
\end{tikzcd}\]
hence the pasting of $\widetilde{\gamma }$ and $\varepsilon _2$ yields the unique splitting of $\gamma $ we were looking for.)

As both $i$ and $Lan_yM$ are $\kappa $-lex it follows that $Lan_{ay}M$ is $\kappa $-lex.

$Lan_yM$ has a right adjoint $\mathcal{E}(M(-),\bullet )$ which factors through $i$ iff $M$ is $E$-preserving (easy). In this case it is also a right adjoint for $Lan_yM\circ i$: $\mathcal{E}(Lan_yM(iF),X)\cong \mathbf{Set}^{\mathcal{C}^{op}}(iF,i\mathcal{E}(M(-),X))\cong Sh(\mathcal{C})(F,\mathcal{E}(M(-),X))$. We proved that $Lan_{ay}M$ is $\kappa $-lex cocontinuous hence we have the restricted adjunction

\[\begin{tikzcd}
	{Mod_{\mathcal{E}}(\mathcal{C},E,\kappa )} &&& {Fun^*_{\kappa }(Sh(\mathcal{C}),\mathcal{E})}
	\arrow[""{name=0, anchor=center, inner sep=0}, "{Lan_{ay}}", curve={height=-12pt}, from=1-1, to=1-4]
	\arrow[""{name=1, anchor=center, inner sep=0}, "{(ay)^{\circ }}", curve={height=-12pt}, from=1-4, to=1-1]
	\arrow["\dashv"{anchor=center, rotate=-90}, draw=none, from=0, to=1]
\end{tikzcd}\]

We want to prove $(ay)^{\circ }$ to be an equivalence. Then it follows that the adjunction is an adjoint equivalence (whose unit is $\eta $ in Claim 1.) and the proof will be complete.

In the composite
\[\begin{tikzcd}
	{Fun^*_{\kappa }(Sh(\mathcal{C}),\mathcal{E})} && {Fun^*_{\kappa }(\mathbf{Set}^{\mathcal{C}^{op}}, \mathcal{E})} && {\mathbf{Lex}_{\kappa }(\mathcal{C},\mathcal{E})}
	\arrow["{y^{\circ }}", from=1-3, to=1-5]
	\arrow["{a^{\circ }}", from=1-1, to=1-3]
\end{tikzcd}\]
$y^{\circ }$ is an equivalence. $a^{\circ }$ is fully faithful: given $\alpha :Fa\Rightarrow Ga$ its unique preimage is $F\xRightarrow{F\circ \varepsilon ^{-1}}Fai\xRightarrow{\alpha \circ i}Gai \xRightarrow{G\circ \varepsilon }G$. 

A $\kappa $-lex cocontinuous functor $N^*:\mathbf{Set}^{\mathcal{C}^{op}}\to \mathcal{E}$ is in the essential image of $a^{\circ }$ iff its right adjoint $N_*$ factors through $i$. $\Rightarrow $ is clear (adjoints compose). To see $\Leftarrow $ assume $N_*=i\circ \widetilde{N}_*$. As $i$ preserves and reflects all limits it follows that $\widetilde{N}_*$ is continuous, hence it has a left adjoint $\widetilde{N}^*$. By the uniqueness of adjoints $\widetilde{N}^*\circ a \cong N^*$. Hence $\widetilde{N}^*$ preserves all $<\kappa $-limits which are in the image of $a$, that is, all $<\kappa $-limits as $ai\cong 1_{Sh(\mathcal{C})}$.

But for a $\kappa $-lex functor $M:\mathcal{C}\to \mathcal{E}$, the left Kan-extension $Lan_yM=(y^{\circ })^{-1}(M)$ satisfies this property iff $M$ was $E$-preserving, as we claimed before.

\end{proof}

\section{Eventually: enough points $\Rightarrow $ presheaf type}

This section proves the first half of the main theorem of these notes: that the classifying $\lambda $-topos of a $\kappa $-site is a presheaf topos. Here we will prove it under the assumption that the classifying $\lambda $-topos has enough $\lambda $-points. Then in section 5.~we shall see that this is automatic when $\lambda ^{<\lambda }=\lambda $.

\begin{remark}
    As previously mentioned, $\kappa $-sites are the same as theories in $L^g_{\infty \kappa }$. Since $L^g_{\infty \kappa }\subseteq L^g_{\infty \lambda }$ (for $\lambda \geq \kappa $), there is an easy way to define the free $\lambda $-site associated to some $\kappa $-site $(\mathcal{C},E)$: write down the theory of $(\mathcal{C},E)$ over its canonical signature, treat it as a theory in $L^g_{\infty \lambda }$, then take the syntactic $\lambda $-site (cf.~\cite[Chapter 3]{accessible}).

    However, we prefer to remain within the framework of category theory (while retaining model-theoretic intuition) for a cleaner and more rigorous discussion.
    
    In Theorem \ref{ptsthenpresheaf}, we will provide an explicit and simple description of the associated $\lambda$-site, at least when $\lambda$ is "sharply larger" than $\kappa$. For now, we just call it $(\widetilde{\mathcal{C}},\widetilde{E})$ and include its "freeness" properties as assumptions.
\end{remark}
 
\begin{lemma}\label{keylemma}
We have the following assumptions:
\begin{enumerate}
    \item $\lambda =cf(\lambda )\geq \kappa =cf(\kappa )\geq \aleph _0$.
    \item $(\mathcal{C},E)$ is a small $\kappa $-site, $(\widetilde{\mathcal{C}},\widetilde{E})$ is a small $\lambda $-site, $\varphi :(\mathcal{C},E)\to (\widetilde{\mathcal{C}},\langle \widetilde{E}\rangle _{\lambda })$ is a morphism of $\kappa $-sites. 
    \item $|\mathcal{C}|< \lambda $.
    \item For any $\lambda $-topos $\mathcal{E}$, the map $\varphi ^{\circ }:Mod_{\mathcal{E}}(\widetilde{\mathcal{C}},\widetilde{E},\lambda )\to Mod_{\mathcal{E}}(\mathcal{C},E,\kappa )$ is an equivalence of categories.
    \item There are $(\widetilde{N_i} :\widetilde{\mathcal{C}}\to \mathbf{Set})_i$ $\lambda $-lex $\widetilde{E}$-preserving functors with $\widetilde{N_i}\varphi $ having pointwise size $<\lambda $, such that $\langle Lan_{ay}\widetilde{N_i}\rangle _i:Sh(\widetilde{\mathcal{C}},\langle \widetilde{E} \rangle _{\lambda })\to \mathbf{Set}^I$ is conservative.
\end{enumerate}
Then there is an equivalence $\mathbf{Set}^{Mod(\mathcal{C})_{<\lambda }}\to Sh(\widetilde{\mathcal{C}}, \langle \widetilde{E}\rangle _{\lambda })$ making
\[
\adjustbox{scale=1}{
\begin{tikzcd}
	{\mathcal{C}} &&& {\mathbf{Set}^{Mod(\mathcal{C})_{<\lambda }}} \\
	{\widetilde{\mathcal{C}}} \\
	\\
	{Sh(\widetilde{\mathcal{C}},\langle \widetilde{E}\rangle _{\lambda})}
	\arrow["\varphi"', from=1-1, to=2-1]
	\arrow["ay"', from=2-1, to=4-1]
	\arrow[""{name=0, anchor=center, inner sep=0}, "ev", from=1-1, to=1-4]
	\arrow[""{name=1, anchor=center, inner sep=0}, "\simeq", from=1-4, to=4-1]
	\arrow["\cong"', draw=none, from=0, to=1]
\end{tikzcd}
}
\]
commutative.
\end{lemma}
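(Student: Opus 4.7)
The plan is to construct the equivalence as the left Kan extension of an evaluation functor, then verify it is an equivalence in two steps: conservativity via assumption~5, and full-faithfulness on representable generators.

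First, define $ev:\mathcal{C}\to\mathbf{Set}^{Mod(\mathcal{C})_{<\lambda}}$ by $ev(c)(M):=M(c)$. Limits, colimits, and extremal epimorphic families in this presheaf topos are pointwise (Example~\ref{preshkappatopos}), and each $M\in Mod(\mathcal{C})_{<\lambda}$ is $\kappa$-lex $E$-preserving by definition; hence $ev$ is $\kappa$-lex and $E$-preserving. Assumption~4 applied to the $\lambda$-topos $\mathbf{Set}^{Mod(\mathcal{C})_{<\lambda}}$ supplies a unique (up to iso) $\lambda$-lex $\widetilde{E}$-preserving $\widetilde{ev}:\widetilde{\mathcal{C}}\to\mathbf{Set}^{Mod(\mathcal{C})_{<\lambda}}$ with $\widetilde{ev}\circ\varphi\cong ev$, and Theorem~\ref{classiftopos} extends it to a $\lambda$-lex cocontinuous $F:=Lan_{ay}\widetilde{ev}$ with $F\circ ay\cong\widetilde{ev}$. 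The required commutativity $F\circ ay\circ\varphi\cong ev$ is then automatic, and the sought-after equivalence in the diagram will be a quasi-inverse of $F$.

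Next I would show $F$ is conservative. For each $N\in Mod(\mathcal{C})_{<\lambda}$, the evaluation $ev_N$ at $N$ is $\lambda$-lex cocontinuous, so $ev_N\circ F$ is $\lambda$-lex cocontinuous, and restricts along $ay$ to $ev_N\circ\widetilde{ev}$, a $\lambda$-lex $\widetilde{E}$-preserving functor whose precomposition with $\varphi$ is $N$. The uniqueness clause in assumption~4 identifies $ev_N\circ\widetilde{ev}\cong\widetilde{N}$, whence by the uniqueness in Theorem~\ref{classiftopos} we obtain $ev_N\circ F\cong Lan_{ay}\widetilde{N}$. Applying this to $N_i:=\widetilde{N_i}\varphi$ (which lies in $Mod(\mathcal{C})_{<\lambda}$ by the pointwise-size clause of assumption~5) makes $\langle ev_{N_i}\circ F\rangle_i$ conservative, so $F$ itself is conservative.

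The main obstacle is to upgrade conservativity to equivalence. Since $F$ is cocontinuous it has a right adjoint $G$, making $(F,G)$ a geometric morphism of $\lambda$-toposes; conservativity makes it a surjection. It remains to show it is also an inclusion, i.e.\ the unit $\eta:\mathrm{id}\Rightarrow GF$ is iso. By cocontinuity, this reduces to verifying $\eta$ on the generators $ay(c)$, i.e.\ to showing the canonical map $\hom_{Sh(\widetilde{\mathcal{C}})}(ay(d),ay(c))\to\hom_{\mathbf{Set}^{Mod(\mathcal{C})_{<\lambda}}}(\widetilde{ev}(d),\widetilde{ev}(c))$ is bijective for all $c,d\in\widetilde{\mathcal{C}}$. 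This is a \emph{conceptual completeness} statement: morphisms in $\widetilde{\mathcal{C}}$ are both determined by, and generated from, their behavior on all $<\lambda$-sized models. I would attack it by running the cofinal-cotree construction of Theorem~\ref{completenesstree} at objects and sections of $\widetilde{ev}(c)$, leveraging $|\mathcal{C}|<\lambda$ (assumption~3) to keep the relevant sites small enough and assumption~5 to ensure each branch yields a $<\lambda$-sized model that both separates sections and lifts natural transformations between evaluations to bona fide morphisms; commutativity with $\varphi$ then propagates the conclusion from $\mathcal{C}$ across to $\widetilde{\mathcal{C}}$ via assumption~4.
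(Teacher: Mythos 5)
Your first half is sound and matches the paper's setup: the functor you call $F=Lan_{ay}\widetilde{ev}$ is exactly the paper's $Lan_{ay}Ran_{\varphi}(ev)$, and your conservativity argument (identifying $ev_{N_i}\circ F\cong Lan_{ay}\widetilde{N_i}$ via the uniqueness clauses of Assumption~4 and Theorem~\ref{classiftopos}, then invoking Assumption~5) is correct and is essentially how the paper uses Assumption~5 as well. The problem is the last step. First, your reduction is logically insufficient even if carried out: you conflate ``inclusion'' with ``the unit $\mathrm{id}\Rightarrow GF$ is iso.'' That condition says $F$ is fully faithful (i.e.\ the geometric morphism is \emph{connected}), and a conservative, fully faithful, cocontinuous lex functor need not be an equivalence --- e.g.\ $p^*:\mathbf{Set}\to\mathbf{Set}^G$ for a nontrivial group $G$ has all these properties. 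What you actually need is the \emph{counit} $FG\Rightarrow\mathrm{id}$ to be invertible (equivalently, that the right adjoint $G$ is fully faithful, or that $F$ is essentially surjective given full faithfulness), and that is a statement about $G$ which does not reduce to hom-sets between the generators $ay(c)$; nor does checking the unit on generators suffice, since $GF$ is not cocontinuous. Second, even the sub-claim you do target is only gestured at: re-running the cotree construction of Theorem~\ref{completenesstree} would require local-size and cardinality bounds on $\widetilde{\mathcal{C}}$ and $\widetilde{E}$ that are not among the hypotheses of this lemma --- Assumption~5 is precisely what is supposed to replace that machinery here.

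The idea your proposal is missing is the paper's ``diagram'' functor $\Delta:Mod(\mathcal{C})_{<\lambda}^{op}\to\widetilde{\mathcal{C}}$, $\Delta M=\lim_{(x,p)\in\int M}\varphi x$, which exists because $|\mathcal{C}|<\lambda$ and $M$ is pointwise $<\lambda$ force $|\int M|<\lambda$, and $\widetilde{\mathcal{C}}$ is $\lambda$-lex. The natural isomorphism $\delta$, namely $ev_{\Delta M}\cong Nat(M,-\circ\varphi)\cong Mod(\mathcal{C})_{<\lambda}(M,-)$ (using the equivalence of Assumption~4), shows that every representable generator of $\mathbf{Set}^{Mod(\mathcal{C})_{<\lambda}}$ is hit, and $Lan_y(ay\Delta)$ is the explicit quasi-inverse: the paper verifies both composites against the identity using cocontinuity, the full faithfulness of $\varphi^*$, and the universal property from Theorem~\ref{classiftopos}, with Assumption~5 entering again to show that $\eta$ is invertible and that $Lan_y(ay\Delta)$ is $\lambda$-lex. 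Without some version of $\Delta$ (or another argument for essential surjectivity), the proof cannot be completed along the lines you describe.
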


\begin{proof}
    We define a functor $\Delta :Mod(\mathcal{C})_{<\lambda }^{op}\to \widetilde{\mathcal{C}}$ by 
\[\begin{tikzcd}
	{M\xrightarrow{\alpha}N} & \mapsto & {lim_{(x,p)\in \int M} \ \varphi x} && {lim_{(x,q)\in \int N} \ \varphi x}
	\arrow["{\langle \pi _{(x,\alpha _x(p)) }\rangle }"', from=1-5, to=1-3]
\end{tikzcd}\]
which makes sense as by the regularity of $\lambda $: $|\int M |<\lambda $. We have a natural transformation:

\[
\adjustbox{scale=1}{
\begin{tikzcd}
	{\mathcal{C}} &&& {\mathbf{Set}^{Mod(\mathcal{C})_{<\lambda }}} \\
	{\widetilde{\mathcal{C}}} \\
	\\
	{Sh(\widetilde{\mathcal{C}},\langle \widetilde{E}\rangle _{\lambda})} &&& \textcolor{rgb,255:red,128;green,128;blue,128}{Mod(\mathcal{C})_{<\lambda }^{op}}
	\arrow["\varphi"', from=1-1, to=2-1]
	\arrow["ay"', from=2-1, to=4-1]
	\arrow["ev", from=1-1, to=1-4]
	\arrow[""{name=0, anchor=center, inner sep=0}, "{Lan_y(ay\Delta)}"{description}, from=1-4, to=4-1]
	\arrow["y"', color={rgb,255:red,128;green,128;blue,128}, hook, from=4-4, to=1-4]
	\arrow[""{name=1, anchor=center, inner sep=0}, "ay\Delta", color={rgb,255:red,128;green,128;blue,128}, from=4-4, to=4-1]
	\arrow["\eta"{description}, shorten <=27pt, shorten >=18pt, Rightarrow, from=1-4, to=2-1]
	\arrow["\cong"', color={rgb,255:red,128;green,128;blue,128}, shorten <=10pt, shorten >=10pt, Rightarrow, from=1, to=0]
\end{tikzcd}
}
\]
whose $v$-component is defined by

\[
\adjustbox{scale=1}{
\begin{tikzcd}
	& {a\widetilde{\mathcal{C}}(-,\varphi v)} \\
	\\
	& {Lan_y(ay\Delta)(ev_v)} \\
	\\
	{\underbrace{lim _{(x,p)\in \int M} \ a\widetilde{\mathcal{C}}(-,\varphi x)}_{(M,\ p_0\in Mv)}} && {\underbrace{lim _{(x,q)\in \int N} \ a\widetilde{\mathcal{C}}(-,\varphi x)}_{(N,\ q_0\in Nv)}}
	\arrow["{\alpha }"{description}, color={rgb,255:red,128;green,128;blue,128}, curve={height=12pt}, shorten <=8pt, shorten >=8pt, from=5-1, to=5-3]
	\arrow["{\langle \pi _{(x,\alpha _x(p))}\rangle}"', shorten <=8pt, shorten >=8pt, from=5-3, to=5-1]
	\arrow["{\pi _{(v,p_0)}}", curve={height=-12pt}, from=5-1, to=1-2]
	\arrow["{\pi _{(v,q_0)}}"', curve={height=12pt}, from=5-3, to=1-2]
	\arrow[from=5-1, to=3-2]
	\arrow[from=5-3, to=3-2]
	\arrow["{\eta _v}"', dashed, from=3-2, to=1-2]
\end{tikzcd}
}
\]
(i.e.~to compute $Lan_y(ay\Delta)(ev_v)$ we have to write $ev_v$ as the colimit of representables along its category of elements, then apply $ay\Delta $ to this diagram, finally compute its colimit. We have a cocone over this diagram with top $ay\varphi (v)$, and $\eta _v$ is the induced map).

To check the commutativity of the naturality squares (say at $f:v\to w$), one has to precompose with a leg of the colimit, then it has the form:

\[
\adjustbox{scale=1}{
\begin{tikzcd}
	{a\widetilde{\mathcal{C}}(-,\varphi v)} &&& {a\widetilde{\mathcal{C}}(-,\varphi w)} \\
	\\
	{Lan_y(ay\Delta)(ev_v)} &&& {Lan_y(ay\Delta)(ev_w)} \\
	\\
	{\underbrace{lim _{(x,p)\in \int M} \ a\widetilde{\mathcal{C}}(-,\varphi x)}_{(M,\ p_0\in Mv)}} &&& {\underbrace{lim _{(x,p)\in \int M} \ a\widetilde{\mathcal{C}}(-,\varphi x)}_{(M,\ Mf(p_0)\in Mw)}}
	\arrow[shorten <=23pt, shorten >=23pt, Rightarrow, no head, from=5-1, to=5-4]
	\arrow[from=5-1, to=3-1]
	\arrow[dashed, from=3-1, to=3-4]
	\arrow[from=5-4, to=3-4]
	\arrow["{\eta _v}"', from=3-1, to=1-1]
	\arrow["{\eta _w}"', from=3-4, to=1-4]
	\arrow["{a(\varphi f_{\circ })}"{description}, from=1-1, to=1-4]
	\arrow["{\pi _{(v,p_0)}}"{pos=0.7}, curve={height=-60pt}, from=5-1, to=1-1]
	\arrow["{\pi _{(w,Mf(p_0))}}"'{pos=0.7}, curve={height=60pt}, from=5-4, to=1-4]
\end{tikzcd}
}
\]
which can be drawn as

\[
\adjustbox{scale=1}{
\begin{tikzcd}
	& {lim _{(x,p)\in \int M}\ a\widetilde{\mathcal{C}}(-,\varphi x)} \\
	\\
	{a\widetilde{\mathcal{C}}(-,\varphi v)^{p_0}} && {a\widetilde{\mathcal{C}}(-,\varphi w)^{Mf(p_0)}}
	\arrow["{\pi _{(v,p_0)}}"', from=1-2, to=3-1]
	\arrow["{a(\varphi f_{\circ })}", from=3-1, to=3-3]
	\arrow["{\pi _{(w,Mf(p_0))}}", from=1-2, to=3-3]
\end{tikzcd}
}
\]

Our goal is to prove that $\eta $ is an isomorphism and $Lan_y(ay\Delta )$ is an equivalence. We start with the following observation (saying that if $M$ is a sufficiently small model, so that its diagram $\Delta M$ is a single formula in $\widetilde{\mathcal{C}}$, then its evaluation at a model $\widetilde{N}(\Delta M)$ consists of those tuples which enumerate an $M\to \widetilde{N}\varphi $ homomorphism):

There is a natural isomorphism
\[
\adjustbox{scale=1}{
\begin{tikzcd}
	{Mod(\mathcal{C})_{<\lambda }^{op}} && {\widetilde{\mathcal{C}}} \\
	\\
	{\mathbf{Set}^{Mod(\mathcal{C})_{<\lambda }}} && {\mathbf{Set}^{Mod(\widetilde{\mathcal{C}})_{-\circ \varphi <\lambda }}}
	\arrow["\Delta", from=1-1, to=1-3]
	\arrow["y"', hook', from=1-1, to=3-1]
	\arrow["{(\varphi ^{\circ })^{\circ }}"', from=3-1, to=3-3]
	\arrow["ev", from=1-3, to=3-3]
	\arrow["\delta"{description}, shorten <=22pt, shorten >=22pt, Rightarrow, from=1-3, to=3-1]
\end{tikzcd}
}
\]
given by

\[
\adjustbox{width=\textwidth}{
\begin{tikzcd}
	{ev_{\Delta M}} & {lim_{(x,p)\in \int M}\ ev_{\varphi x}} & {lim_{\int M}\ Nat(\mathcal{C}(-,x),- \circ \varphi )} & {} \\
	\\
	{ev_{\Delta N}} & {lim_{(x,q)\in \int N}\ ev_{\varphi x}} & {lim_{\int N}\ Nat(\mathcal{C}(-,x),- \circ \varphi )} & {} \\
	& {} & {Nat(colim_{\int M } \mathcal{C}(-,x),-\circ \varphi )} & {Nat(M,-\circ \varphi )} \\
	\\
	& {} & {Nat(colim_{\int N } \mathcal{C}(-,x),-\circ \varphi )} & {Nat(N,-\circ \varphi )}
	\arrow["{ev_{\Delta \alpha }}", from=3-1, to=1-1]
	\arrow["\cong"{description}, draw=none, from=1-1, to=1-2]
	\arrow["{\langle \pi _{(x,\alpha _xp)}\rangle}", from=3-2, to=1-2]
	\arrow["\cong"{description}, draw=none, from=3-1, to=3-2]
	\arrow["\cong"{description}, draw=none, from=1-2, to=1-3]
	\arrow["\cong"{description}, draw=none, from=3-2, to=3-3]
	\arrow["{\langle \pi _{(x,\alpha _xp)}\rangle}", from=3-3, to=1-3]
	\arrow["{(i_{(x,\alpha _xp)})^{\circ }}", from=6-3, to=4-3]
	\arrow["\cong"{description}, draw=none, from=4-3, to=4-4]
	\arrow["{\alpha ^{\circ }}"', from=6-4, to=4-4]
	\arrow["\cong"{description}, draw=none, from=6-3, to=6-4]
	\arrow["\cong"{description}, draw=none, from=1-3, to=1-4]
	\arrow["\cong"{description}, draw=none, from=3-3, to=3-4]
	\arrow["\cong"{description}, draw=none, from=4-2, to=4-3]
	\arrow["\cong"{description}, draw=none, from=6-2, to=6-3]
\end{tikzcd}
}
\]

Now we will prove that $\eta $ is an isomorphism. For this it suffices to prove that 

\[\begin{tikzcd}
	& {\varphi v} \\
	\\
	{\underbrace{lim _{(x,p)\in \int M} \ \varphi x}_{(M,\ p_0\in Mv)}} && {\underbrace{lim _{(x,q)\in \int N} \ \varphi x}_{(N,\ q_0\in Nv)}}
	\arrow["{\alpha }"{description}, color={rgb,255:red,128;green,128;blue,128}, curve={height=12pt}, shorten <=4pt, shorten >=4pt, from=3-1, to=3-3]
	\arrow["{\langle \pi _{(x,\alpha _x(p))}\rangle}"', shorten <=4pt, shorten >=4pt, from=3-3, to=3-1]
	\arrow["{\pi _{(v,p_0)}}", curve={height=-12pt}, from=3-1, to=1-2]
	\arrow["{\pi _{(v,q_0)}}"', curve={height=12pt}, from=3-3, to=1-2]
\end{tikzcd}\]
is mapped to a colimit by $ay$. By Theorem \ref{classiftopos} and by Assumption 5.~it is enough that it is mapped to a colimit by any $\widetilde{N}:\widetilde{\mathcal{C}}\to \mathbf{Set}$ $\lambda $-lex $\widetilde{E}$-preserving functor with $|\widetilde{N}\varphi |<\lambda $ (as in this case the $colim\to ay\varphi (v)$ map is taken to an iso by each (cocontinuous) $Lan_{ay}\widetilde{N}$, and those are jointly conservative).

But using the isomorphism $\delta $ constructed above we have 
\[\begin{tikzcd}
	{\widetilde{N}\varphi v} && {Nat(\mathcal{C}(-,v),\widetilde{N}\varphi )} \\
	& {=} \\
	{\underbrace{\widetilde{N}(\Delta M)=lim _{(x,p)\in \int M} \ \widetilde{N}\varphi x}_{(M,\ p_0\in Mv)}} && {\underbrace{Nat(M,\widetilde{N}\varphi )}_{(M,p_0\in Mv)}}
	\arrow["{\pi _{(v,p_0)}}", from=3-1, to=1-1]
	\arrow["\cong ","{(\delta _M)_{\widetilde{N}}}"', from=3-1, to=3-3]
	\arrow["{i_{(v,p_0)}^{\circ } }"', from=3-3, to=1-3]
	\arrow["\cong", from=1-1, to=1-3]
\end{tikzcd}\]
and the composite $Nat(M,\widetilde{N}\varphi )\to \widetilde{N}\varphi v$ takes $\beta $ to $\beta _v(p_0)$. So this is the canonical colimit $ev_{v}\cong colim _{(M,p_0)\in \int ev_v}Mod(\mathcal{C})_{<\lambda }(M,-)$, evaluated at $\widetilde{N}\varphi $.

It remains to check that $Lan_y(ay\Delta )$ is an equivalence. Its proposed quasi-inverse is $((\varphi ^{\circ })^{\circ })^{-1}\circ Lan_{ay} ev$, fitting in the diagram
\[\begin{tikzcd}
	{\mathbf{Set}^{Mod(\mathcal{C})_{<\lambda }}} && {\mathbf{Set}^{Mod(\widetilde{\mathcal{C}})_{-\circ \varphi <\lambda }}} \\
	\\
	{Mod(\mathcal{C})_{<\lambda }^{op}} && {\widetilde{\mathcal{C}}} && {Sh(\widetilde{C},\langle \widetilde{E}\rangle _{\lambda})}
	\arrow["\Delta", from=3-1, to=3-3]
	\arrow["y", hook, from=3-1, to=1-1]
	\arrow["{(\varphi ^{\circ })^{\circ }}", from=1-1, to=1-3]
	\arrow[""{name=0, anchor=center, inner sep=0}, "ev", from=3-3, to=1-3]
	\arrow["\delta"', shorten <=22pt, shorten >=22pt, Rightarrow, from=3-3, to=1-1]
	\arrow["ay", from=3-3, to=3-5]
	\arrow["{Lan_{ay}ev}"', curve={height=12pt}, from=3-5, to=1-3]
	\arrow["\gamma ^{-1}"', shorten <=28pt, shorten >=14pt, Rightarrow, from=3-5, to=0]
\end{tikzcd}\]

Since both $Lan_y(ay\Delta )$ and $((\varphi ^{\circ })^{\circ })^{-1}\circ Lan_{ay} ev$ are cocontinuous, this iso extends to an isomorphism between identity on $\mathbf{Set}^{Mod(\mathcal{C})_{<\lambda }}$ and $((\varphi ^{\circ })^{\circ })^{-1}\circ Lan_{ay} ev \circ Lan_y(ay\Delta )$.

To check the other composite, observe the diagram

\[
\adjustbox{width=\textwidth}{
\begin{tikzcd}
	&&&& {\mathcal{C}} \\
	&& {\widetilde{\mathcal{C}}} &&&& {\widetilde{\mathcal{C}}} \\
	&&& \cong \\
	{Sh(\widetilde{\mathcal{C}},\langle \widetilde{E}\rangle _{\lambda})} && {\mathbf{Set}^{Mod(\widetilde{\mathcal{C}})_{-\circ \varphi <\lambda }}} && {\mathbf{Set}^{Mod(\mathcal{C})_{<\lambda }}} && {Sh(\widetilde{\mathcal{C}},\langle \widetilde{E}\rangle _{\lambda})}
	\arrow["\varphi", from=1-5, to=2-7]
	\arrow["ay", from=2-7, to=4-7]
	\arrow["ev"{description}, from=1-5, to=4-5]
	\arrow["{Lan_y(ay\Delta)}"', from=4-5, to=4-7]
	\arrow["\eta", shorten <=21pt, shorten >=14pt, Rightarrow, from=4-5, to=2-7]
	\arrow["{((\varphi ^{\circ })^{\circ })^{-1}}"', from=4-3, to=4-5]
	\arrow["{\varphi }"', from=1-5, to=2-3]
	\arrow[""{name=0, anchor=center, inner sep=0}, "ev"{description}, from=2-3, to=4-3]
	\arrow["ay"', from=2-3, to=4-1]
	\arrow[""{name=1, anchor=center, inner sep=0}, "{Lan_{ay}ev}"', from=4-1, to=4-3]
	\arrow["{\gamma ^{-1}}", shorten <=10pt, shorten >=10pt, Rightarrow, from=1, to=0]
\end{tikzcd}
}
\]
Assume that $Lan_y(ay\Delta )$ preserves $<\lambda $-limits. Then both $ay$ and $Lan_y(ay\Delta )\circ ((\varphi ^{\circ })^{\circ })^{-1}\circ Lan_{ay} ev \circ ay$ are $\widetilde{\mathcal{C}}\to Sh(\widetilde{\mathcal{C}},\langle \widetilde{E} \rangle _{\lambda }) $ $\lambda $-lex $\widetilde{E}$-preserving functors, and there's an iso between their $\varphi $-restrictions. By Assumption 4.~$\varphi ^{\circ }$ is fully faithful, so there's an iso between the $\widetilde{\mathcal{C}}\to  Sh(\widetilde{\mathcal{C}},\langle \widetilde{E} \rangle _{\lambda })$ functors. So identity on $Sh(\widetilde{\mathcal{C}},\langle \widetilde{E} \rangle _{\lambda })$ and $Lan_y(ay\Delta )\circ ((\varphi ^{\circ })^{\circ })^{-1}\circ Lan_{ay} ev $ are two $\lambda $-topos maps whose $ay$-restrictions are isomorphic. Then Theorem \ref{classiftopos} Claim 2.~completes the proof.

It remains to check that $Lan_y(ay\Delta)$ preserves $<\lambda $-limits. For any $\widetilde{N}:\widetilde{\mathcal{C}}\to \mathbf{Set}$ $\lambda $-lex $\widetilde{E}$-preserving with $|\widetilde{N}\varphi |<\lambda $ we have
\[\begin{tikzcd}
	{\mathbf{Set}^{Mod(\mathcal{C})_{<\lambda}}} \\
	\\
	{Mod(\mathcal{C})_{<\lambda}^{op}} && {Sh(\widetilde{\mathcal{C}},\langle \widetilde{E}\rangle _{\lambda})} && {\mathbf{Set}}
	\arrow["y", hook, from=3-1, to=1-1]
	\arrow[""{name=0, anchor=center, inner sep=0}, "ay\Delta"{description}, from=3-1, to=3-3]
	\arrow["{Lan_y(ay\Delta)}", from=1-1, to=3-3]
	\arrow["{\widetilde{N}^*}"{description}, from=3-3, to=3-5]
	\arrow[""{name=1, anchor=center, inner sep=0}, "{Mod(\mathcal{C})_{<\lambda}(-,\widetilde{N}\varphi )}"', curve={height=24pt}, from=3-1, to=3-5]
	\arrow["\cong", shorten <=15pt, shorten >=15pt, Rightarrow, from=0, to=1-1]
	\arrow["\cong"{description}, draw=none, from=1, to=3-3]
\end{tikzcd}\]
coming from
\[\begin{tikzcd}
	{Mod(\mathcal{C})_{<\lambda }^{op}} && {\widetilde{\mathcal{C}}} && {Sh(\widetilde{\mathcal{C}})} \\
	{\mathbf{Set}^{Mod(\mathcal{C})_{<\lambda }}} && {\mathbf{Set}^{Mod(\widetilde{\mathcal{C}})_{-\circ \varphi <\lambda }}} && {\mathbf{Set}}
	\arrow[""{name=0, anchor=center, inner sep=0}, "\Delta", from=1-1, to=1-3]
	\arrow["y"', from=1-1, to=2-1]
	\arrow[""{name=1, anchor=center, inner sep=0}, "ay", from=1-3, to=1-5]
	\arrow["ev", from=1-3, to=2-3]
	\arrow["{\widetilde{N}^*}", from=1-5, to=2-5]
	\arrow[""{name=2, anchor=center, inner sep=0}, "{(\varphi ^{\circ })^{\circ}}"', from=2-1, to=2-3]
	\arrow[""{name=3, anchor=center, inner sep=0}, "{ev_{\widetilde{N}}}"', from=2-3, to=2-5]
	\arrow["{\cong _{\delta }}"{description}, draw=none, from=0, to=2]
	\arrow["\cong"{description}, draw=none, from=1, to=3]
\end{tikzcd}\]

As left adjoints preserve left Kan-extensions, $\widetilde{N}^*\circ Lan_y(ay\Delta )$ is the left Kan-extension of a representable, hence representable, in particular $\lambda $-lex. So $\mathbf{Set}^{Mod(\mathcal{C})_{<\lambda}}\xrightarrow{Lan_y(ay\Delta)} Sh(\widetilde{\mathcal{C}},\langle \widetilde{E}\rangle _{\lambda}) \xrightarrow{ \widetilde{N_i}^*}\mathbf{Set}^I$ is $\lambda $-lex, the second map is $\lambda $-lex, conservative, therefore the first map is $\lambda $-lex (the second map inverts the connecting homomorphism going from the image of the limit to the limit of the images).
\end{proof}

Now we would like to identify $\widetilde{\mathcal{C}}$ from the previous lemma. Assume that we can find $\varphi :\mathcal{C}\to \widetilde{\mathcal{C}}$ which is the free completion of $\mathcal{C}$ under $<\lambda $-limits, in the sense that $\varphi ^{\circ }:\mathbf{Lex}_{\lambda }(\widetilde{\mathcal{C}},\mathcal{E})\to \mathbf{Lex}_{\kappa }(\mathcal{C},\mathcal{E})$ is an equivalence for any $\lambda $-lex (or at least for complete) $\mathcal{E}$. Then it restricts to an equivalence between the full subcategories of $\varphi[E]$-preserving and $E$-preserving functors. 

We have a good candidate: assume it exists, then apply the previous lemma with trivial $E$ (only the identities are contained). We get an equivalence: $\mathbf{Set}^{\widetilde{\mathcal{C}}^{op}}\simeq \mathbf{Set}^{\mathbf{Lex}_{\kappa }(\mathcal{C},\mathbf{Set})_{<\lambda }}$. So our guess is that $y:\mathcal{C}\to (\mathbf{Lex}_{\kappa }(\mathcal{C},\mathbf{Set})_{<\lambda })^{op}$ will do the job. 

The following is \cite[Proposition 1.45 (ii)]{rosicky}:

\begin{proposition}
    $\mathcal{C}$ is $\kappa $-lex, small. Take $y:\mathcal{C}\hookrightarrow \mathbf{Lex}_{\kappa }(\mathcal{C},\mathbf{Set})^{op}$. Then $y$ is $\kappa $-lex and $y^{\circ }:\mathbf{LEX}_{\infty }(\mathbf{Lex}_{\kappa }(\mathcal{C},\mathbf{Set})^{op}, \mathcal{E})\to \mathbf{Lex}_{\kappa }(\mathcal{C},\mathcal{E})$ is an equivalence for any complete category $\mathcal{E}$.
\end{proposition}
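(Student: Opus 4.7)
I will first show $y$ is $\kappa$-lex by a direct Yoneda computation: for a $<\kappa$-limit $c=\lim_i c_i$ in $\mathcal{C}$ and any $N\in \mathbf{Lex}_\kappa(\mathcal{C},\mathbf{Set})$,
\[
\mathrm{Nat}(\mathcal{C}(c,-),N)\cong N(c)\cong \lim_i N(c_i)\cong \lim_i \mathrm{Nat}(\mathcal{C}(c_i,-),N),
\]
using Yoneda and the $\kappa$-lexness of $N$; this identifies $\mathcal{C}(c,-)$ as the colimit of the $\mathcal{C}(c_i,-)$ in $\mathbf{Lex}_\kappa(\mathcal{C},\mathbf{Set})$, so $y(c)=\lim_i y(c_i)$ in $\mathbf{Lex}_\kappa(\mathcal{C},\mathbf{Set})^{op}$.

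For the equivalence I will construct a quasi-inverse $M\mapsto \widetilde{M}$ to $y^*$. Given $\kappa$-lex $M:\mathcal{C}\to \mathcal{E}$, set $\widetilde{M}(N):=\lim_{(c,x)\in \int N} M(c)$; this makes sense because $\mathcal{C}$ is small and $N$ takes values in $\mathbf{Set}$, so $\int N$ is small, and $\mathcal{E}$ is complete. A direct Yoneda identification shows that the slice $(N\downarrow y)$ in $\mathbf{Lex}_\kappa(\mathcal{C},\mathbf{Set})^{op}$ agrees with $\int N$, exhibiting $\widetilde{M}$ as the pointwise right Kan extension $Ran_y M$. The relation $\widetilde{M}\circ y\cong M$ is then immediate: $\int y(c_0)=(c_0\downarrow \mathcal{C})$ has initial object $(c_0,\mathrm{id}_{c_0})$, so the limit collapses to $M(c_0)$.

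The main obstacle is to show $\widetilde{M}$ preserves all small limits. I will reformulate $\widetilde{M}^{op}:\mathbf{Lex}_\kappa(\mathcal{C},\mathbf{Set})\to \mathcal{E}^{op}$ as the left Kan extension $Lan_{y^{op}}(M^{op})$, where $y^{op}:\mathcal{C}^{op}\hookrightarrow \mathbf{Lex}_\kappa(\mathcal{C},\mathbf{Set})$ realises $\mathbf{Lex}_\kappa(\mathcal{C},\mathbf{Set})\simeq \mathbf{Ind}_\kappa(\mathcal{C}^{op})$ as the $\kappa$-ind-completion of $\mathcal{C}^{op}$. Since $\mathcal{C}^{op}$ has $<\kappa$-colimits (as $\mathcal{C}$ is $\kappa$-lex), $\mathcal{E}^{op}$ is cocomplete, and $M^{op}$ preserves $<\kappa$-colimits, the classical universal property of the $\kappa$-ind-completion supplies a unique (up to iso) cocontinuous extension---necessarily $\widetilde{M}^{op}$---so $\widetilde{M}$ is continuous. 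The rest of the equivalence then follows formally: for any continuous $F:\mathbf{Lex}_\kappa(\mathcal{C},\mathbf{Set})^{op}\to \mathcal{E}$, $F^{op}$ cocontinuously extends $(F\circ y)^{op}$, so $F\cong \widetilde{F\circ y}$ by uniqueness, and fullness and faithfulness of $y^*$ reduce to the functoriality of the Kan extension.
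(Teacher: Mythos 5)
Your proposal is correct, and its skeleton (show $y$ is $\kappa$-lex via Yoneda, build the quasi-inverse as the pointwise right Kan extension $Ran_yM(N)=\lim_{\int N}M$, identify $(N\downarrow y)$ with $\int N$, then deduce full faithfulness from uniqueness of the extension) matches the paper's. The one step where you genuinely diverge is the crux, continuity of $Ran_yM$: you pass to opposite categories, identify $\mathbf{Lex}_{\kappa }(\mathcal{C},\mathbf{Set})$ with $\mathbf{Ind}_{\kappa }(\mathcal{C}^{op})$, and invoke the classical universal property of the $\kappa$-ind-completion of a $<\kappa$-cocomplete category (together with density of the representables to see that the unique cocontinuous extension must be $Lan_{y^{op}}(M^{op})$). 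The paper instead gets continuity in one line by exhibiting an explicit left adjoint to $Ran_yM$, namely $x\mapsto \mathcal{E}(x,M(-))$, which lands in $\mathbf{Lex}_{\kappa }(\mathcal{C},\mathbf{Set})^{op}$ precisely because $M$ is $\kappa$-lex; being a right adjoint, $Ran_yM$ preserves all limits. The adjoint argument is more elementary and self-contained (no appeal to the theory of $\kappa$-accessible categories), which suits the paper's stated aim of a self-contained presentation; your route imports a heavier but standard external result and in exchange makes visible the conceptual content, namely that $\mathbf{Lex}_{\kappa }(\mathcal{C},\mathbf{Set})^{op}$ is the $\kappa$-pro-completion of $\mathcal{C}$ --- which is exactly what the longer variant of this proof at the end of the paper reproves by hand via explicit manipulation of $\kappa$-filtered colimits of representables. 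If you keep your version, you should at least record the density argument hiding behind the word ``necessarily'': a cocontinuous functor agreeing with $M^{op}$ on representables agrees with the pointwise left Kan extension because every object of $\mathbf{Ind}_{\kappa }(\mathcal{C}^{op})$ is the canonical $\kappa$-filtered colimit of representables indexed by $(\int N)^{op}$.
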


\begin{proof}
    $y$ preserves $<\kappa $-limits: we need that $\mathcal{C}(lim\ x_i,-)\xleftarrow{\pi _j^{\circ }} \mathcal{C}(x_j,-)$ is a colimit diagram in $\mathbf{Lex}_{\kappa }(\mathcal{C},\mathbf{Set})$. A cocone with top $M$ is a compatible family of elements $(a_i\in Mx_i)_i$ in the $M$-image of the base diagram. This corresponds to a unique element $(a_i)_i$ in $M(lim\ x_i)$ as $M$ preserves $<\kappa $-limits, which yields the unique induced arrow $\mathcal{C}(lim\ x_i,-)\xRightarrow{1\mapsto (a_i)_i}M$.

    We claim that for an $M:\mathcal{C}\to \mathcal{E}$ $\kappa $-lex functor (for complete $\mathcal{E}$), the right Kan-extension
\[\begin{tikzcd}
	{\mathcal{C}} && {\mathcal{E}} \\
	\\
	{\mathbf{Lex}_{\kappa }(\mathcal{C},\mathbf{Set})^{op}}
	\arrow["y"', hook', from=1-1, to=3-1]
	\arrow[""{name=0, anchor=center, inner sep=0}, "M", from=1-1, to=1-3]
	\arrow["{Ran_yM}"', from=3-1, to=1-3]
	\arrow["\varepsilon", "\cong "', shorten <=14pt, shorten >=14pt, Rightarrow, from=3-1, to=0]
\end{tikzcd}\]
    is continuous. This follows as $\mathcal{E}\ni x \mapsto \mathcal{E}(x,M(-))\in \mathbf{Lex}_{\kappa }(\mathcal{C},\mathbf{Set})^{op}$ is a left adjoint to it.

    So $y^{\circ }$ is essentially surjective. It is also fully faithful: given continuous functors $\widetilde{M},\widetilde{N}: \mathbf{Lex}_{\kappa }(\mathcal{C},\mathbf{Set})^{op}\to \mathcal{E}$, they are the right Kan-extensions of $\widetilde{M}y$, resp.~$\widetilde{N}y$ (with identity as $\varepsilon$), hence any natural transformation $\alpha :\widetilde{M}y\Rightarrow \widetilde{N}y$ induces a unique $\widetilde{\alpha }:\widetilde{M}\to \widetilde{N}$ for which $\widetilde{\alpha }\circ y=\alpha $.
\end{proof}

\begin{definition}
    $\kappa =cf(\kappa )$ is \emph{sharply smaller} than $\lambda =cf(\lambda )$ (written as $\kappa \vartriangleleft \lambda $) if $\kappa <\lambda $ and for any set $X$ with $|X|<\lambda $, the poset $\mathcal{P}_{\kappa }(X)$ of $<\kappa $ subsets contains a cofinal set of size $<\lambda $. 
\end{definition}

\begin{proposition}\label{freelambdalim}
    $\mathcal{C}$ is $\kappa $-lex, small, $\aleph _0\leq \kappa = cf(\kappa ) \vartriangleleft \lambda =cf(\lambda )$. Write $\mathcal{C}\xhookrightarrow{\varphi }\widetilde{\mathcal{C}}\xhookrightarrow{j} \mathbf{Lex}_{\kappa }(\mathcal{C},\mathbf{Set})^{op}$ for the factorization of $y$ through the full subcategory spanned by $\kappa $-cofiltered $<\lambda $ limits of representables. Then $\widetilde{\mathcal{C}}$ is $\lambda $-lex, $\varphi $ is $\kappa $-lex and $\varphi ^{\circ }:\mathbf{Lex}_{\lambda }(\widetilde{\mathcal{C}},\mathcal{E})\to \mathbf{Lex}_{\kappa }(\mathcal{C},\mathcal{E})$ is an equivalence for any complete category $\mathcal{E}$.
\end{proposition}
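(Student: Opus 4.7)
The plan is to verify the three assertions in sequence. Step~1, that $\varphi$ is $\kappa$-lex, follows immediately: the fully faithful inclusion $j:\widetilde{\mathcal{C}}\hookrightarrow \mathbf{Lex}_{\kappa}(\mathcal{C},\mathbf{Set})^{op}$ reflects limits, $j\varphi=y$ is $\kappa$-lex by the previous proposition, and representables lie in $\widetilde{\mathcal{C}}$ as $\kappa$-cofiltered limits over the terminal category.

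Step~2 is the substantive one: I will show that $\widetilde{\mathcal{C}}$ is closed under $<\lambda$-limits in the ambient category, hence $\lambda$-lex in its own right. This splits into (a) every $<\lambda$-indexed limit of representables lies in $\widetilde{\mathcal{C}}$, and (b) a $<\lambda$-limit of $\kappa$-cofiltered $<\lambda$-limits of representables unfolds (by the regularity of $\lambda$) into a single $<\lambda$-limit of representables. For (a), given $D:J\to\mathcal{C}$ with $|J|<\lambda$, the poset $\mathcal{I}$ of $<\kappa$-sub-indexing-categories of $J$ (under inclusion) is $\kappa$-directed, each $<\kappa$-limit $\lim_{J'}D|_{J'}$ is representable by $\kappa$-lex-ness of $\mathcal{C}$ and of $y$, and one has
\[
\lim_J yD \;\cong\; \lim_{J'\in\mathcal{I}^{op}} y\bigl(\lim_{J'}D|_{J'}\bigr).
\]
By $\kappa\vartriangleleft\lambda$ applied to the $<\lambda$-set of morphisms of $J$, $\mathcal{I}$ admits a cofinal subposet $S$ with $|S|<\lambda$; any cofinal subset of a $\kappa$-directed poset is itself $\kappa$-directed (given $<\kappa$ elements of $S$, bound them in $\mathcal{I}$, then bound that above in $S$), so $S^{op}$ is $\kappa$-cofiltered of size $<\lambda$ and $\lim_J yD \cong \lim_{S^{op}} y(\lim_{J'}D|_{J'})$ realizes $\lim_J yD$ as a $\kappa$-cofiltered $<\lambda$-limit of representables.

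For Step~3 (the equivalence), I factor $y=j\varphi$, so $y^*=\varphi^*\circ j^*$; the previous proposition identifies $y^*$ with an equivalence $\mathbf{LEX}_{\infty}(\mathbf{Lex}_\kappa(\mathcal{C},\mathbf{Set})^{op},\mathcal{E})\simeq\mathbf{Lex}_\kappa(\mathcal{C},\mathcal{E})$. For essential surjectivity of $\varphi^*$: given $M\in\mathbf{Lex}_\kappa(\mathcal{C},\mathcal{E})$, set $\widetilde M := j^*(Ran_y M)$; this is $\lambda$-lex by Step~2, and $\varphi^*\widetilde M = Ran_y M\circ y\cong M$. Faithfulness: any $\beta:\widetilde M\Rightarrow\widetilde N$ is determined on $X=\lim_i\varphi(c_i)\in\widetilde{\mathcal{C}}$ by naturality together with the $\lambda$-limit property of $\widetilde N(X)$, hence by its values on $\varphi(\mathcal{C})$. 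For fullness, I first note that every $\lambda$-lex $\widetilde M$ satisfies $j^*Ran_y(\widetilde M\varphi)\cong \widetilde M$ canonically (compute both sides on $X=\lim_i\varphi(c_i)$ using continuity of $Ran_y(\widetilde M\varphi)$, the iso $\varepsilon$ at representables, and $\lambda$-lex-ness of $\widetilde M$); given $\alpha:\widetilde M\varphi\Rightarrow\widetilde N\varphi$, the previous proposition lifts $\alpha$ uniquely to $\tilde\alpha:Ran_y(\widetilde M\varphi)\Rightarrow Ran_y(\widetilde N\varphi)$, which restricts along $j$ and transports through these canonical isomorphisms to the desired extension $\widetilde M\Rightarrow\widetilde N$.

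The hard part will be sub-claim (a) of Step~2: the sharpness condition $\kappa\vartriangleleft\lambda$ is exactly what ensures $\mathcal{I}$ admits a cofinal (hence automatically $\kappa$-directed) subposet of size $<\lambda$, and this is the only place in the proof where the hypothesis actually enters; everything else follows from the universal properties of right Kan extensions together with the closure result of Step~2.
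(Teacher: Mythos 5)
Your Steps 1 and 2, together with the essential surjectivity and faithfulness parts of Step 3, follow the paper's own argument almost verbatim (including the useful explicit remark, which the paper leaves implicit, that a cofinal subset of a $\kappa$-directed poset is again $\kappa$-directed, so the $<\lambda$-size cofinal family provided by $\kappa\vartriangleleft\lambda$ really does index a $\kappa$-cofiltered limit). The one place where your proposal falls short is fullness, and it falls short exactly where the paper invests its only genuinely nontrivial work. You assert that every $\lambda$-lex $\widetilde{M}$ satisfies $j^*Ran_y(\widetilde{M}\varphi)\cong\widetilde{M}$ \emph{canonically}, justified by ``computing both sides on $X=\lim_i\varphi(c_i)$''. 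That computation only produces, for each object $X$ separately and relative to a \emph{chosen} presentation of $X$ as a $\kappa$-cofiltered $<\lambda$-limit of representables, an isomorphism in $\mathcal{E}$; it does not show that these isomorphisms assemble into a natural transformation. Naturality has to be checked against arbitrary morphisms $f:X\to Y$ of $\widetilde{\mathcal{C}}$, which need not be induced maps between the chosen presentations. (The presentation that \emph{is} canonical and functorial, namely the one over the category of elements $\int jX$, is in general not of size $<\lambda$ --- the proposition does not assume $|\mathcal{C}|<\lambda$ --- so $\lambda$-lexness of $\widetilde{M}$ cannot be applied to it directly.)

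The paper closes precisely this gap: it reduces fullness to showing that two $\lambda$-lex functors $\widetilde{M_1},\widetilde{M_2}:\widetilde{\mathcal{C}}\to\mathcal{E}$ with $\widetilde{M_1}\varphi=\widetilde{M_2}\varphi$ are isomorphic via an isomorphism restricting to the identity on $\mathcal{C}$, and proves this using the fact that representables are tiny in $\mathbf{Set}^{\mathcal{C}}$: for a morphism between $\kappa$-filtered colimits of representables in $\mathbf{Lex}_{\kappa}(\mathcal{C},\mathbf{Set})$, each restriction to a representable leg of the domain factors through some leg of the codomain, so the value of a $\lambda$-lex functor on such a morphism is forced once its values on the two limit objects are fixed. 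This is what turns the objectwise comparison isomorphisms into a natural isomorphism. You need to supply this argument (or an equivalent cofinality argument for the functor $I_X\to\int jX$ from your chosen presentation into the canonical one) to complete the fullness step; the rest of your proposal is sound and coincides with the paper's route.
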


\begin{proof}
We claim that $\widetilde{\mathcal{C}}$ is the full subcategory spanned by $<\lambda $ limits of representables. Indeed, take a $<\lambda $ diagram $\mathcal{I}\to \mathbf{Lex}_{\kappa }(\mathcal{C},\mathbf{Set})$. Its colimit is the $\kappa $-directed colimit indexed by $\mathcal{P}_{\kappa }(Arr(\mathcal{I}))$, of the colimits of $<\kappa $ subdiagrams and the induced maps between them. These $<\kappa $-colimits are representable (see the first paragraph of the previous proof) and by assumption the diagram has a cofinal subdiagram of size $<\lambda $. It follows that $\widetilde{\mathcal{C}}$ is $\lambda $-lex and $\varphi $ is $\kappa $-lex.

$\varphi ^{\circ }$ is essentially surjective: given $M:\mathcal{C}\to \mathcal{E}$ $\kappa $-lex $Ran_yM \circ j$ gives a preimage. It is faithful: any natural transformation between $\widetilde{M},\widetilde{N}:\widetilde{\mathcal{C}}\to \mathcal{E}$ $\lambda $-lex maps is uniquely determined by its $\varphi $-restriction; the components at the $<\lambda $-limits are the induced morphisms.

$\varphi ^{\circ }$ is full: to simplify notation note that since $y$ is injective on objects, $\varepsilon $ in the right Kan-extension can be assumed to be an identity. Take $\widetilde{M},\widetilde{N}:\widetilde{\mathcal{C}}\to \mathcal{E}$ $\lambda $-lex and a natural transformation $\alpha :\widetilde{M}\varphi \Rightarrow \widetilde{N}\varphi $. Then $Ran_y(\widetilde{M}\varphi )\circ j$ is a map whose $\varphi $-restriction equals $\widetilde{M}\varphi $ and similarly for $\widetilde{N}$, moreover $\alpha $ has a preimage between these functors. So it suffices to prove that if $\widetilde{M_1}$ and $\widetilde{M_2}$ are $\widetilde{\mathcal{C}}\to \mathcal{E}$ $\lambda $-lex functors such that $\widetilde{M_1}\varphi =\widetilde{M_2}\varphi $ then there's a natural isomorphism $\gamma :\widetilde{M_1}\Rightarrow \widetilde{M_2}$ such that $\restr{\gamma }{\mathcal{C}}$ is identity. But given an arrow $f:x\to y$ in $\widetilde{\mathcal{C}}^{op}$ its domain and codomain are both $\kappa $-filtered colimits of representables, which is also a colimit in $\mathbf{Set}^{\mathcal{C}}$ where representables are tiny, so the restrictions of our arrow factor through some leg of the colimit with top $y$. It follows that once $\widetilde{M_1}$ decides to which limit object $x$ and $y$ would be sent, the image of $f$ is uniquely determined. The comparison maps between the limits chosen by $M_1$ and those chosen by $M_2$ give the required isomorphism. 
\end{proof}

As a last ingredient, we will prove the following version of the downward Löwenheim-Skolem theorem:

\begin{theorem}\label{LS}
    Assume $\aleph _0\leq \kappa =cf(\kappa )\vartriangleleft \lambda =cf(\lambda )$. Let $(\mathcal{C},E)$ be a $\kappa $-site with $|\mathcal{C}|,|E|<\lambda $. Then $Mod(\mathcal{C},E,\kappa )$ is a $\lambda $-accessible category with $\kappa $-filtered colimits (and its $\lambda $-presentable objects are the pointwise $<\lambda $ functors).
\end{theorem}

\begin{proof}
    It is enough to see that in $Mod(\mathcal{C},E,\kappa )$ every object is a $\lambda $-filtered colimit of pointwise $<\lambda $ models. So start with a model $M:\mathcal{C}\to \mathbf{Set}$. In $\mathbf{Set}^{\mathcal{C}}$ it is the $\lambda $-filtered union of its pointwise $<\lambda $ subfunctors, as the subfunctor generated by a collection of subsets $A(x)\subseteq M(x)$ is the closure under $M(f)$-images for $f\in \mathcal{C}$ but $|\mathcal{C}| <\lambda $. A filtered union is a filtered colimit, because this holds in $\mathbf{Set}$ and in presheaf categories colimits are pointwise. So it suffices to prove that every pointwise $<\lambda $ subfunctor of $M$ is contained in a pointwise $<\lambda $ subfunctor which is $\kappa $-lex $E$-preserving.

    We will prove the following: i) every $<\lambda $ subfunctor is contained in a $<\lambda $ subfunctor which is lex and $E$-preserving, ii) it is contained in a $<\lambda $ subfunctor which preserves $<\kappa $ products. This is sufficient as we can build a $\kappa $-chain out of $A$, in odd steps applying i), in even steps applying ii), in limit steps taking the colimit, then the union of this chain is $<\lambda $, $E$-preserving, and preserves both finite limits and $<\kappa $-products, hence $\kappa $-lex.

    i) This is easy: one has to build an $\omega $-chain out of $A$, in each step applying the following three closure operators:  
    \begin{itemize}
        \item[-] close under $M(f)$-images for $f\in \mathcal{C}$,
        \item[-] for each family $(u_i\to x)_i$ in $E$, and each $a\in A_n(x)$ choose one arbitrary preimage in one $M(u_j)$ and add it to $A_n(u_j)$,
        \item[-] for each finite diagram $\mathcal{I}\to \mathcal{C}$ and each compatible family formed by elements in $A_n$ ($(<\lambda )^{<\omega }=(<\lambda ))$ add the corresponding element to $A_n(lim_{\mathcal{I}})$.
    \end{itemize}
    Each of these add $<\lambda $ elements so $A_{\omega }$ satisfies our requirements. 

    ii) By \cite[Remark 1.21]{rosicky} we can write $M$ as a $\kappa $-directed (as opposed to just $\kappa $-filtered) colimit of representables (indexed by $P$). By \cite[Theorem 2.11]{rosicky} $\kappa \vartriangleleft \lambda $ implies that the $\kappa $-directed $<\lambda $ subsets of $P$ form a $\lambda $-directed poset $Q$. Hence we can write $M$ as a $\lambda $-directed colimit of pointwise $<\lambda $ and $\kappa $-lex functors (indexed by $Q$), as representables are pointwise $ <\lambda $ and therefore their $<\lambda $ colimits are $<\lambda $. We can take the pointwise image-factorization of each $M_q\Rightarrow M$ hence $M$ is the $\lambda $-directed union of these images which are easily proved to preserve $<\kappa $ products.
\end{proof}

\begin{theorem}\label{ptsthenpresheaf}
We have the following assumptions:
    \begin{itemize}
        \item $\aleph _0\leq \kappa =cf(\kappa )\vartriangleleft \lambda =cf(\lambda )$
        \item $(\mathcal{C},E)$ is a $\kappa $-site, $|\mathcal{C}|,|E| <\lambda  $.
    \end{itemize}
Write $\mathcal{C}\xhookrightarrow{\varphi } \widetilde{\mathcal{C}}$ for  $\mathcal{C}\xhookrightarrow{y}(\mathbf{Lex}_{\kappa }(\mathcal{C},\mathbf{Set})_{<\lambda })^{op}$. Then:
    \begin{itemize}
        \item  The map $-\circ \varphi :Mod_{\mathcal{E}}(\widetilde{\mathcal{C}},\varphi [E],\lambda )\to Mod_{\mathcal{E}}(\mathcal{C},E,\kappa )$ is an equivalence for any $\lambda $-topos $\mathcal{E}$.
        \item TFAE:
        \begin{enumerate}
            \item $Sh(\widetilde{\mathcal{C}},\langle \varphi [E]\rangle _{\lambda})$ has enough $\lambda $-points.
            \item  In the diagram below $Lan_{ay}Ran_{\varphi }(ev)$ is an equivalence:
\[\begin{tikzcd}
	{\mathcal{C}} &&& {\mathbf{Set}^{Mod(\mathcal{C})_{<\lambda }}} \\
	{\widetilde{\mathcal{C}}} \\
	\\
	{Sh(\widetilde{\mathcal{C}},\langle \varphi [E]\rangle _{\lambda})}
	\arrow["\varphi"', from=1-1, to=2-1]
	\arrow["ay"', from=2-1, to=4-1]
	\arrow[""{name=0, anchor=center, inner sep=0}, "ev", from=1-1, to=1-4]
	\arrow[""{name=1, anchor=center, inner sep=0}, "{Lan_{ay}Ran_{\varphi } (ev)}"', from=4-1, to=1-4]
	\arrow[""{name=2, anchor=center, inner sep=0}, "{Ran_{\varphi }(ev)}"{description}, from=2-1, to=1-4]
	\arrow["\cong"', draw=none, from=0, to=2]
	\arrow["{\cong }"', draw=none, from=2, to=1]
\end{tikzcd}\]
        \end{enumerate}
        
    \end{itemize}
\end{theorem}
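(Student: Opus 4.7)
My plan is to derive the first bullet from Proposition \ref{freelambdalim} and to establish the TFAE by invoking Lemma \ref{keylemma} for $(1)\Rightarrow(2)$ and a standard enough-points observation for presheaf toposes for $(2)\Rightarrow(1)$. For the first bullet, Proposition \ref{freelambdalim} yields an equivalence $\varphi^{*}\colon\mathbf{Lex}_{\lambda}(\widetilde{\mathcal{C}},\mathcal{E})\to\mathbf{Lex}_{\kappa}(\mathcal{C},\mathcal{E})$ for any complete $\mathcal{E}$, in particular for any $\lambda$-topos. This restricts to the cover-preserving subcategories tautologically: since $\varphi[E]$ is by definition the image family, a $\lambda$-lex $\widetilde{M}$ preserves $\varphi[E]$-families iff $\widetilde{M}\varphi$ preserves $E$-families.

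For $(2)\Rightarrow(1)$, the presheaf topos $\mathbf{Set}^{Mod(\mathcal{C})_{<\lambda }}$ has enough $\lambda$-points via the evaluations $ev_{M}$ (preserving all limits and colimits and jointly conservative), so any topos equivalent to it does too.

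For $(1)\Rightarrow(2)$ I would apply Lemma \ref{keylemma} with $\widetilde{E}=\varphi[E]$. Assumptions 1--3 are hypothesized, assumption 4 is the first bullet, and the candidate for assumption 5 is the family $\{\widetilde{N}_{M}:M\in Mod(\mathcal{C})_{<\lambda }\}$ of $\lambda$-lex $\varphi[E]$-preserving extensions (so $\widetilde{N}_{M}\varphi\cong M$ has pointwise size $<\lambda$ by construction). The main obstacle is a Löwenheim--Skolem step establishing joint conservativity of the corresponding small $\lambda$-points. My plan: using $\kappa\vartriangleleft\lambda$ together with $|\mathcal{C}|,|E|<\lambda$, every $M\in Mod(\mathcal{C})$ is a $\lambda$-directed colimit of its $<\lambda$ sub-models (close any $<\lambda$ subset of elements under $<\kappa$-limit and $E$-cover witnesses; by sharpness the closure stays of size $<\lambda$). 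Under the equivalence $Lan_{ay}\colon Mod_{\mathbf{Set}}(\widetilde{\mathcal{C}},\varphi[E],\lambda)\simeq Fun^{*}(Sh,\mathbf{Set})$ of Theorem \ref{classiftopos}, and since $\lambda$-filtered colimits are computed pointwise on both sides, any $\lambda$-point is the $\lambda$-filtered colimit of small $\widetilde{N}_{M}^{*}$-points. A standard filtered-colimit argument in $\mathbf{Set}$ then shows that any witness that $\widetilde{N}^{*}(f)$ fails to be an iso already descends to some $\widetilde{N}_{M}^{*}(f)$ failing to be an iso, so the enough $\lambda$-points hypothesis transfers to joint conservativity of the small family.

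Lemma \ref{keylemma} then produces an equivalence $\mathbf{Set}^{Mod(\mathcal{C})_{<\lambda }}\simeq Sh(\widetilde{\mathcal{C}},\langle\varphi[E]\rangle_{\lambda })$, whose quasi-inverse constructed in the lemma's proof is $((\varphi^{*})^{*})^{-1}\circ Lan_{ay}ev$, sending $(F,M)$ to $\widetilde{N}_{M}^{*}(F)$. It remains to identify this with $Lan_{ay}Ran_{\varphi}(ev)$: the pointwise formula for right Kan extension combined with the uniqueness of $\lambda$-lex extensions (Proposition \ref{freelambdalim}) gives $Ran_{\varphi}(ev)(v)(M)=\widetilde{N}_{M}(v)$, so that $Lan_{ay}Ran_{\varphi}(ev)(F)(M)=Lan_{ay}\widetilde{N}_{M}(F)=\widetilde{N}_{M}^{*}(F)$ (computing $Lan_{ay}$ pointwise per $M$ in the presheaf category). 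Hence $Lan_{ay}Ran_{\varphi}(ev)$ coincides with the quasi-inverse and is an equivalence.
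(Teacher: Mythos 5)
Your overall route coincides with the paper's: the first bullet from Proposition \ref{freelambdalim}, the implication $2\Rightarrow 1$ from the evaluation points of a presheaf topos, and $1\Rightarrow 2$ by verifying Assumption 5 of Lemma \ref{keylemma} through a L\"owenheim--Skolem decomposition of an arbitrary model into pointwise $<\lambda$ submodels, followed by the transfer of conservativity along the filtered colimit and the identification of the resulting equivalence with $Lan_{ay}Ran_{\varphi}(ev)$. Those parts are fine (modulo one small omission: your $\widetilde{\mathcal{C}}$ is $(\mathbf{Lex}_{\kappa }(\mathcal{C},\mathbf{Set})_{<\lambda })^{op}$, whereas Proposition \ref{freelambdalim} is stated for the subcategory of $\kappa$-cofiltered $<\lambda$ limits of representables; you need the observation that these coincide with the pointwise $<\lambda$ functors, which the paper spells out).

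The genuine gap is in the L\"owenheim--Skolem step. You claim that a $<\lambda$ subset of $\int M$ can be closed under ``$<\kappa$-limit witnesses'' and that ``by sharpness the closure stays of size $<\lambda$''. This is not a valid argument: a subfunctor $A\subseteq M$ that preserves $<\kappa$-limits must contain, for every $<\kappa$-indexed compatible family of its elements, the corresponding limit element, so one closure step can add up to $\mu^{<\kappa}$ elements where $\mu=|A(x)|$; and $\kappa\vartriangleleft\lambda$ together with $\mu<\lambda$ does \emph{not} imply $\mu^{<\kappa}<\lambda$. For instance $\aleph_1\vartriangleleft\aleph_2$ holds in ZFC (initial segments of $\omega_1$ are cofinal among its countable subsets), while $\mu^{\aleph_0}<\aleph_2$ can fail badly for $\mu<\aleph_2$. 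This is exactly why the paper splits the closure into two separate operations: finite limits and $E$-covers are handled by a naive $\omega$-chain of closures (using only $(<\lambda)^{<\omega}=(<\lambda)$), whereas $<\kappa$-products are handled structurally, by writing $M$ as a $\kappa$-directed colimit of representables indexed by $P$, invoking \cite[Theorem 2.11]{rosicky} to see that the $\kappa$-directed $<\lambda$ subsets of $P$ form a $\lambda$-directed poset, and taking the images of the restricted colimit cocones; these images are pointwise $<\lambda$ because they are quotients of $<\lambda$-indexed coproducts of Hom-sets, not because one enumerates all $<\kappa$-tuples. Without some version of this argument (or an extra cardinal-arithmetic hypothesis that the theorem does not assume), your construction of the $\lambda$-directed system of small submodels does not go through, and Assumption 5 of Lemma \ref{keylemma} is not verified.
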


\begin{proof}
    The first claim follows from Proposition \ref{freelambdalim}. All we need is that $\kappa $-filtered ${<\lambda }$ colimits of representables in $\mathbf{Lex}_{\kappa }(\mathcal{C},\mathbf{Set})$ (equivalently in $\mathbf{Set}^{\mathcal{C}}$) are precisely those functors which have pointwise cardinality $<\lambda $. $"\subseteq "$ follows as $\mathcal{C}(x,-)$ has pointwise size $ <\lambda $ and a $<\lambda $ colimit of pointwise $<\lambda $ functors is a quotient of the pointwise disjoint union whose cardinality is $<\lambda $ as $\lambda $ was regular. $"\supseteq "$ follows as (again by regularity) if $M$ is pointwise $<\lambda $ then $|\int M|<\lambda $. The equivalence $\varphi ^{\circ }:\mathbf{Lex}_{\lambda }(\widetilde{\mathcal{C}},\mathcal{E})\to \mathbf{Lex}_{\kappa }(\mathcal{C},\mathcal{E})$ restricts to an equivalence between $\varphi [E]$-, resp.~$E$-preserving functors.
    
    (Note also that $\varphi $ is a morphism of $\kappa $-sites, as since $\varphi $ is $\kappa $-lex the $\langle E\rangle _{\kappa }$-families are taken to $\langle \varphi [E] \rangle _{\kappa }\subseteq \langle \varphi [E] \rangle _{\lambda }$-families.)

    In the second claim the implication $2\Rightarrow 1$ is easy: $ev_M$ for $M\in Mod(\mathcal{C})_{<\lambda }$ is a jointly conservative set of $\lambda $-points. To prove the converse we have to check that $\varphi :(\mathcal{C},E)\to (\widetilde{\mathcal{C}},\varphi [E])$ satisfies Assumption 5.~in Lemma \ref{keylemma}. That is, $Sh(\widetilde{\mathcal{C}},\langle \varphi[E]\rangle _{\lambda})$ has enough $\lambda $-points whose restriction to $\mathcal{C}$ is pointwise $<\lambda $. (This is enough, as if there is an equivalence $\mathbf{Set}^{Mod(\mathcal{C})_{<\lambda }}\to Sh(\widetilde{\mathcal{C}})$ making the triangle commute up to isomorphism, then the same holds for its quasi-inverse, in which case it must be $Lan_{ay}Ran_{\varphi }(ev)$.)
    
    Now pick an arrow $f:x\to y$ in $Sh(\widetilde{\mathcal{C}})$ which is not an isomorphism and choose $\widetilde{N}^*:Sh(\widetilde{\mathcal{C}})\to \mathbf{Set}$ which keeps it non-iso. By Theorem \ref{LS} we can write $\widetilde{N}^* \circ ay \circ \varphi $ as the $\lambda $-filtered colimit of $M_i$'s, each being pointwise $<\lambda $. Since $\varphi ^{\circ }$ was an equivalence (whose quasi-inverse hence preserves this colimit), we can write $\widetilde{N}^*\circ ay$ as the $\lambda $-filtered colimit of $\widetilde{M_i}$'s with $\widetilde{M_i}\circ \varphi \cong M_i$. This colimit is pointwise as $Mod(\widetilde{\mathcal{C}},\varphi [E],\lambda )$ is closed under $\lambda $-filtered colimits in $\mathbf{Set}^{\widetilde{\mathcal{C}}}$. Now the quasi-inverse of $(ay)^{\circ }$ maps this to a $\lambda $-filtered colimit of $\widetilde{M_i}^*$'s with top $\widetilde{N}^*$ such that $\widetilde{M_i}^* \circ ay \circ \varphi \cong M_i$. This colimit is also pointwise: in fact, the pointwise colimit $\widehat{N}^*$ is $\lambda $-lex cocontinuous, so there's an induced map $\widetilde{N}^*\Rightarrow \widehat{N}^*$, inverted by the equivalence $(ay)^{\circ }$. It follows that $\widetilde{M_i}^*(f)$ is non-iso for some $i$. 
\end{proof}

\begin{remark}
\label{rem48}
    The fact that $-\circ \varphi :Mod_{\mathcal{E}}(\widetilde{\mathcal{C}},\varphi [E],\lambda )\to Mod_{\mathcal{E}}(\mathcal{C},E,\kappa )$ is an equivalence for any $\lambda $-topos $\mathcal{E}$ is the same as $-\circ \varphi ^* :Fun^*_{\lambda }(Sh(\widetilde{\mathcal{C}},\langle \varphi [E] \rangle _{\lambda }),\mathcal{E}) \to Fun^*_{\kappa }(Sh(\mathcal{C},\langle E\rangle _{\kappa }),\mathcal{E}) $ being an equivalence. This universal property guarantees that the classifying $\lambda $-topos $Sh(\widetilde{\mathcal{C}},\langle \varphi [E] \rangle _{\lambda })$ depends only on the $\kappa $-topos $Sh(\mathcal{C},\langle E\rangle _{\kappa })$ and not on the $\kappa $-site $(\mathcal{C},E)$. 
    
    Moreover we get a 2-functor $\widetilde{(\ )}$ from "$\kappa $-toposes which have a $<\lambda $ defining $\kappa $-site, $\kappa $-lex cocontinuous functors and natural transformations" to "$\lambda $-toposes, $\lambda $-lex  cocontinuous functors and natural transformations". It takes 1-cells and 2-cells to their unique extensions:
\[\begin{tikzcd}
	{\mathcal{E}} &&& {\widetilde{\mathcal{E}}} \\
	\\
	{\mathcal{F}} &&& {\widetilde{\mathcal{F}}}
	\arrow[""{name=0, anchor=center, inner sep=0}, "{\varphi _{\mathcal{E}}}", from=1-1, to=1-4]
	\arrow[""{name=1, anchor=center, inner sep=0}, "{F^*}"{description}, curve={height=18pt}, from=1-1, to=3-1]
	\arrow[""{name=2, anchor=center, inner sep=0}, "{G^*}"{description}, curve={height=-18pt}, from=1-1, to=3-1]
	\arrow[""{name=3, anchor=center, inner sep=0}, "{\widetilde{F^*}}"{description}, curve={height=18pt}, from=1-4, to=3-4]
	\arrow[""{name=4, anchor=center, inner sep=0}, "{\widetilde{G^*}}"{description}, curve={height=-18pt}, from=1-4, to=3-4]
	\arrow[""{name=5, anchor=center, inner sep=0}, "{\varphi _{\mathcal{F}}}"', from=3-1, to=3-4]
	\arrow["{\alpha }", shorten <=11pt, shorten >=11pt, Rightarrow, from=1, to=2]
	\arrow["\cong"{description}, curve={height=-12pt}, shorten <=9pt, shorten >=9pt, Rightarrow, from=0, to=5]
	\arrow["\cong"{description}, curve={height=12pt}, shorten <=9pt, shorten >=9pt, Rightarrow, from=0, to=5]
	\arrow["{\widetilde{\alpha }}", shorten <=11pt, shorten >=11pt, Rightarrow, from=3, to=4]
\end{tikzcd}\]
(meaning: for each 1-cell $F^*$ we fix a choice of $\widetilde{F^*}$ and an isomorphism in the square, then for each 2-cell $\alpha $ there is only one $\widetilde{\alpha }$ which makes the above 3-cell commute).

It is natural to ask about the properties of this 2-functor. E.g.~if $F$ is an embedding/surjection/open/\dots \ does the same apply to $\widetilde{F}$?
\end{remark}

\section{A $\lambda $-separable $\lambda $-topos has enough $\lambda $-points}

\begin{lemma}\label{compltopos}
    Let $(\mathcal{C},E)$ be a $\lambda $-site with $|E|\leq \lambda $, $\mathcal{C}$ being of local size $\leq \lambda $, $\lambda =cf(\lambda )$, and take a family $(f_j:u_j\to x_0)_j$. If for every $\lambda $-lex $E$-preserving $M:\mathcal{C}\to \mathbf{Set}$ functor the $M$-image of the family is jointly surjective then the $ay$-image is extremal epimorphic. 
\end{lemma}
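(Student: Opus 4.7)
Since every epimorphism in a Grothendieck topos is extremal, and a family of maps into a representable sheaf is epimorphic in $Sh(\mathcal{C}, \langle E\rangle_\lambda)$ precisely when it generates a $\langle E\rangle_\lambda$-covering sieve, it suffices to produce an $\langle E\rangle_\lambda$-cover $(g_j : v_j \to x_0)_j$ such that each $g_j$ factors through some $f_i$: the sieve generated by $(f_i)_i$ then contains the cover, and Grothendieck topologies are closed under such refinement.

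My plan is to adapt the cotree construction from the proof of Theorem \ref{completenesstree} with $\kappa := \lambda$ applied to $(\mathcal{C}, E)$. The table-based enumeration of pullback tasks there uses only that $\mathcal{C}$ is $\lambda$-lex of local size $\leq \lambda$ and $|E| \leq \lambda$, so it goes through in our setting. This produces a continuous cofinal $(E^{pb}, \lambda^+)$-cotree $T$ with root $x_0$, and along each branch $h = (x_0 \xleftarrow{p_{h,1}} u^h_1 \xleftarrow{p_{h,2}} \cdots)$ the colimit
\[
M_h \;=\; \mathrm{colim}\bigl(\mathcal{C}(x_0,-) \xRightarrow{p_{h,1}^*} \mathcal{C}(u^h_1,-) \xRightarrow{p_{h,2}^*} \cdots \bigr)
\]
is a $\lambda$-lex $E$-preserving functor $\mathcal{C} \to \mathbf{Set}$: the branch is a $\lambda$-filtered linear order, so the colimit preserves $<\lambda$-limits of representables, and $E$-preservation is built in by the enumeration.

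By the hypothesis of the lemma, $(M_h(f_i))_i$ is jointly surjective, hence $[1_{x_0}] \in M_h(x_0)$ is hit by some element of $M_h(u_{i_h})$. Unpacking this equality in the filtered colimit of hom-sets yields a level $\alpha_h < \lambda$ and an arrow $u^h_{\alpha_h} \to u_{i_h}$ whose composite with $f_{i_h}$ coincides with the branch map $u^h_{\alpha_h} \to x_0$. I would then truncate each branch of $T$ at its first such level $\alpha_h$; the result is a continuous $(E^{pb}, \lambda)$-cotree $T'$, whose transfinite cocomposition $(g_j : v_j \to x_0)_j$ lies by construction in $(E^{pb})^{tree_\lambda} = \langle E\rangle_\lambda$, and each $g_j$ factors through some $f_i$ by the choice of $\alpha_h$. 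This delivers the refining cover required in the first paragraph.

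The only delicate point to address is that Theorem \ref{completenesstree} formally requires a strict initial object in $\mathcal{C}$, which is not part of our hypotheses. This plays no role in building $T$: strict initiality is invoked in the proof of Theorem \ref{completenesstree} only to identify branches that collapse onto $\emptyset$ (yielding the terminal copresheaf rather than an $E$-preserving model). If $\mathcal{C}$ has no initial object, no such collapse can occur; if $\mathcal{C}$ does have one and a degenerate branch arises, its leaf is initial, and $\emptyset \to x_0$ factors trivially through every $f_i$. Either way, the truncation argument applies verbatim, which is why I expect no further obstacle.
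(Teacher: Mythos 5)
Your overall strategy — run the cotree construction of Theorem \ref{completenesstree} on $(\mathcal{C},E)$ with $\kappa:=\lambda$, apply the hypothesis to each branch colimit to find a level where the leg factors through some $f_i$, truncate there, and conclude that the resulting $\langle E\rangle_{\lambda}$-cover refines the sieve generated by $(f_i)_i$ — is exactly the paper's strategy. But there is a genuine gap in how you dispose of the missing hypothesis of Theorem \ref{completenesstree}, and it is not the one you identify. The problem is not whether $\mathcal{C}$ has an initial object; it is that $E$, being an arbitrary collection of families in a $\lambda$-site, may contain \emph{empty} covering families $\{z \leftarrow \emptyset\}$ on objects $z$ that are in no way initial in $\mathcal{C}$. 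An $E$-preserving functor must send every such $z$ to $\emptyset$. The tree construction cannot arrange this: an empty family cannot be ``solved'' by pulling back (either the branch dies, or you omit these tasks from the enumeration, as Theorem \ref{completenesstree} does when it ``cancels the empty union from the list''). Consequently a branch colimit $M_h$ is only guaranteed to preserve the \emph{non-empty} families of $E$; it fails to be $E$-preserving precisely when some object $u^h_\alpha$ on the branch admits a map to such a $z$, and this can happen whether or not $\mathcal{C}$ has an initial object. For those branches the hypothesis of the lemma simply does not apply, so your claim that ``$E$-preservation is built in by the enumeration'' and your final paragraph (``if $\mathcal{C}$ has no initial object, no such collapse can occur'') do not close the argument.

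The paper's proof is structured around exactly this point: it builds the tree from $E^{-}$ (the non-empty families only), so every branch is cofinal and $E^{-}$-preserving, and then splits into two cases. If a branch's colimit also preserves the empty covers, it is genuinely $E$-preserving and your truncation argument applies. If not, some $u^h_\alpha$ on the branch maps to a $z$ covered by the empty family; one truncates there instead, and observes that $ay(z)\cong 0$ forces $ay(u^h_\alpha)\cong 0$ by strictness of initial objects in a Grothendieck topos, so this leg factors through the $ay(f_i)$'s for trivial reasons. Your proof is correct as written only under the additional assumption that every family in $E$ is non-empty; to handle the general case you need to add the $E^{-}$ device and this second truncation case.
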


\begin{proof}
    We repeat the proof of the completeness theorem from the first section. Write $E^{-}$ for the subset of $E$ consisting of non-empty covers (containing at least $1\to 1$). By $T_{x_i}$ we denote a list containing all diagrams consisting of some $E^{-}$-family and a map from $x_i$ to the common codomain. As there are $\geq 1$, $\leq \lambda $ such diagrams we can assume that the list has size $\lambda $.

    Now start filling a $\lambda \times \lambda $-big table with the tasks we have to solve. We shall use the canonical well-ordering $h:\lambda \times \lambda \to \lambda $ with the property $h(\alpha ,\beta )\geq \beta $. In the $0^{th}$ column fill in $T_{x_0}$. Then solve task number $h^{-1}(0)$, whose second coordinate is $\leq 0$ so it is defined. By solving we mean: take the pullback of the given family along the given arrow. So now we see an $(E^{-})^{pb}$-covering family on $x_0$.

    Inductively we build a tree of height $\lambda $, where all branches are cofinal. In a successor step $(\alpha +1)$, where the $\alpha ^{\text{th}}$ object of the branch is $x_{0,i_1,\dots }$ take the table whose columns are filled in with tasks concerning the preceding objects of the branch, fill $T_{x_{0,i_1,\dots }}$ to the $\alpha ^{\text{th}}$-column, and solve task number $h^{-1}(\alpha )$, which is defined. Solving means: this is a diagram formed by an arrow from a preceding object $x'$ above $x_{0,i_1,\dots }$ to the codomain of some $E^{-}$-family, precompose with the $x_{0,i_1,\dots }\to x'$ arrow of the branch, then take the pullback of the family along this composite. In limit steps take the limits (cocompositions) of the branches.

    As we deleted the empty covers from $E$, no branch dies out and we get a locally $(E^{-})^{pb}$-covering continuous cotree of height $\lambda $, and when we take the colimit of the representables $\mathcal{C}(x_0,-)\to \mathcal{C}(x_{0,i_1},-)\to \dots $ along a (cofinal) branch we get a $\mathcal{C}\to \mathbf{Set}$ $\lambda $-lex functor which maps each $E^{-}$-family to a jointly surjective one. 

    If a branch is not preserving an empty cover $\{z\ \ \emptyset \}$, it means that from some element $x_{\vec{i}}$ of the branch there exists a morphism $x_{\vec{i}}\to z$. If the branch is $E$-preserving then by assumption the colimit takes $(f_j:u_j\to x_0)_j$ to a jointly surjective family, in particular $[1_{x_0}]$ is hit, meaning that for some $x_{\vec{i}}$ in the branch the map $x_{\vec{i}}\to x$ factors through some $f_j$. Cut down each branch at such a point, then we see an $\langle E^{-}\rangle _{\lambda }$-family on $x_0$, such that each leg either factors through some $f_j$ or the domain admits a map to some object over which $\emptyset $ is a cover. $ay$ turns these objects initial, so now each leg factors through some $ay(f_j)$, the inductively built family is mapped to an (extremal) epimorphic one, consequently $(ay(f_j):ay(u_j)\to ay(x_0))_j$ is epimorphic. 
\end{proof}

By \cite[Lemma 6.1.4]{makkai} this is sufficient. For the reader's convenience we repeat the proof.

\begin{lemma}
    Let $Sh(\mathcal{C},\langle E\rangle _{\omega })$ be any Grothendieck-topos. Take a natural transformation between sheafified representables $\alpha :a\mathcal{C}(-,x)\Rightarrow a\mathcal{C}(-,x')$. Then there is a cover $(f_i:u_i\to x)_i$ such that each composite $a\mathcal{C}(-,u_i)\xRightarrow{a((f_i)_{\circ })} a\mathcal{C}(-,x)\xRightarrow{\alpha } a\mathcal{C}(-,x')$ is a sheafified post-composition $a((g_i)_{\circ })$ for some $g_i:u_i\to x'$.
\end{lemma}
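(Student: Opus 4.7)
The plan is to reduce the statement to the local description of sheafification. By the adjunction $a\dashv i$ between sheafification and the inclusion, and by Yoneda in the presheaf category, natural transformations $a\mathcal{C}(-,x)\Rightarrow a\mathcal{C}(-,x')$ correspond bijectively to elements of $a\mathcal{C}(x,x')$ via $\alpha \mapsto \xi := \alpha_x(1_x)$ (using that $a\mathcal{C}(-,x')$ is a sheaf and $\mathcal{C}(-,x)\to a\mathcal{C}(-,x)$ is the sheafification unit, so restrictions along $a\mathcal{C}(-,x)$ and along $\mathcal{C}(-,x)$ carry the same information). So the whole problem becomes: locally describe the element $\xi \in a\mathcal{C}(x,x')$.

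Next, I would unpack $\xi$ via the plus-plus construction of the associated sheaf. Since $a\mathcal{C}(-,x') = (\mathcal{C}(-,x')^+)^+$, the element $\xi$ is represented by some matching family $(\xi_i)_i$ in $\mathcal{C}(-,x')^+$ over a cover $(f_i:u_i\to x)_i$. Each $\xi_i \in \mathcal{C}(-,x')^+(u_i)$ is itself represented by a matching family of honest morphisms $(g_{ij}:v_{ij}\to x')_j$ over some cover $(h_{ij}:v_{ij}\to u_i)_j$. Composing these via transitivity of the Grothendieck topology yields a single refined cover $(f_i h_{ij} : v_{ij}\to x)_{i,j}$ of $x$ together with genuine morphisms $g_{ij}:v_{ij}\to x'$ in $\mathcal{C}$ whose images in $a\mathcal{C}(v_{ij},x')$ are the restrictions of $\xi$ along $f_i h_{ij}$. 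Relabelling, I get a cover $(f_i:u_i\to x)_i$ and morphisms $g_i:u_i\to x'$ so that $f_i^*\xi = [g_i]$ in $a\mathcal{C}(u_i,x')$.

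Finally, naturality of $\alpha$ gives $\alpha_{u_i}(f_i) = f_i^*\alpha_x(1_x) = f_i^*\xi = [g_i]$. On the other hand, the composite $a\mathcal{C}(-,u_i)\xRightarrow{a((f_i)_*)} a\mathcal{C}(-,x)\xRightarrow{\alpha} a\mathcal{C}(-,x')$ evaluated at the image of $1_{u_i}$ is exactly $\alpha_{u_i}(f_i)$. So both $\alpha\circ a((f_i)_*)$ and $a((g_i)_*)$ are natural transformations of sheaves whose value at $1_{u_i}$ agrees. By Yoneda applied to sheaves (again using the adjunction $a\dashv i$), a natural transformation out of $a\mathcal{C}(-,u_i)$ is determined by where it sends $1_{u_i}$, so the two composites are equal.

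The main obstacle is the second paragraph: being careful that for a non-subcanonical topology one application of the plus construction is not enough, so one must refine twice and then use transitivity of the topology to glue the two levels of covers into a single cover on which $\xi$ has a presentation by actual arrows in $\mathcal{C}$. Everything else is a formal application of Yoneda and the adjunction.
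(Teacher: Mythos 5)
Your proposal is correct and follows essentially the same route as the paper: both arguments unwind the element of $a\mathcal{C}(x,x')$ corresponding to $\alpha $ through two applications of the $+$-construction, use transitivity of the topology to compose the two levels of covers into one, and conclude by Yoneda together with the adjunction $a\dashv i$. The only cosmetic difference is that the paper phrases the key step as a factorization statement for transformations $\mathcal{C}(-,x)\Rightarrow F^{+}$ through $F\Rightarrow F^{+}$, while you phrase it as a local presentation of the element $\xi =\alpha _x(1_x)$ by honest arrows $g_i$; these are the same computation.
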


\begin{proof}
    A consequence of the $+$-construction is that given an arbitrary presheaf $F:\mathcal{C}^{op}\to \mathbf{Set}$ and a natural transformation $\beta :\mathcal{C}(-,x)\Rightarrow F^+$, there is a cover $(f_i: u_i\to x)_i$ such that each $\mathcal{C}(-,u_i)\xRightarrow{(f_i)_{\circ }}\mathcal{C}(-,x)\xRightarrow{\beta } F^+$ factors through the canonical map $F\Rightarrow F^+$. If we start with $F^{++}=aF$ instead of $F^+$, the same remains true as covering trees of height 2 compose to a cover.
    
    Apply this for the composite $\mathcal{C}(-,x)\Rightarrow a\mathcal{C}(-,x)\xRightarrow{\alpha } a\mathcal{C}(-,x')$. We get a cover $(f_i:u_i\to x)_i$ such that for each $i$ there's a commutative diagram
\[\begin{tikzcd}
	{\mathcal{C}(-,u_i)} & {\mathcal{C}(-,x)} & {a\mathcal{C}(-,x)} & {a\mathcal{C}(-,x')} \\
	&&& {\mathcal{C}(-,x')}
	\arrow["{(f_i)_{\circ }}", Rightarrow, from=1-1, to=1-2]
	\arrow[Rightarrow, from=1-2, to=1-3]
	\arrow["\alpha", Rightarrow, from=1-3, to=1-4]
	\arrow[Rightarrow, from=2-4, to=1-4]
	\arrow["{(g_i)_{\circ }}"', Rightarrow, from=1-1, to=2-4]
\end{tikzcd}\]
    whose image under sheafification is exactly what we were looking for.
\end{proof}

\begin{corollary}
    Let $Sh(\mathcal{C},\langle E\rangle _{\omega })$ be any Grothendieck-topos and take a mono\-morphism $\iota :F\Rightarrow a\mathcal{C}(-,x)$. Then there is a cover of $F$ with sheafified representables $a\mathcal{C}(-,u_i)\xRightarrow{\beta _i} F$ such that each $\iota \circ \beta _i$ is a sheafified post-composition.
\end{corollary}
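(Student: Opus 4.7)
The plan is to bootstrap from the lemma (which handles morphisms out of a sheafified representable) by first presenting $F$ itself as a quotient of a coproduct of sheafified representables, and then applying the lemma to each leg composed with $\iota$.

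More precisely, I would first recall that in any Grothendieck-topos every object is a quotient of a coproduct of sheafified representables: writing $F$ as a colimit of representables in $\mathbf{Set}^{\mathcal{C}^{op}}$ and applying the sheafification functor $a$, which is a left adjoint and hence preserves colimits, one obtains a jointly epimorphic family $\gamma _j:a\mathcal{C}(-,v_j)\Rightarrow F$ indexed by some set $J$. Being jointly epimorphic in $Sh(\mathcal{C},\langle E\rangle _{\omega })$ is exactly what the statement means by ``a cover of $F$''.

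Next, for each $j\in J$ the composite $\iota \circ \gamma _j:a\mathcal{C}(-,v_j)\Rightarrow a\mathcal{C}(-,x)$ is a natural transformation between sheafified representables, so the previous lemma applies and produces a cover $(f_{j,i}:u_{j,i}\to v_j)_i$ in the site together with arrows $g_{j,i}:u_{j,i}\to x$ for which $\iota \circ \gamma _j\circ a((f_{j,i})_*)=a((g_{j,i})_*)$. Setting $\beta _{j,i}:=\gamma _j\circ a((f_{j,i})_*):a\mathcal{C}(-,u_{j,i})\Rightarrow F$, the desired factorization $\iota \circ \beta _{j,i}=a((g_{j,i})_*)$ is immediate by construction.

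The only remaining thing to verify is that the collection $(\beta _{j,i})_{j,i}$ is jointly epimorphic. For fixed $j$, since $(f_{j,i})_i$ is a cover in $(\mathcal{C},\langle E\rangle _{\omega })$, its image under $ay$ is extremal epimorphic, so $(a((f_{j,i})_*))_i$ is jointly epimorphic; hence composing with a jointly epimorphic family $(\gamma _j)_j$ yields again a jointly epimorphic family. This routine closure of jointly-epi families under composition is the only step that might look like a technicality but is standard in any Grothendieck-topos, so there is no real obstacle; the whole argument is essentially a one-step reduction of the subobject case to the representable case already handled by the preceding lemma.
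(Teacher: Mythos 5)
Your proof is correct and follows exactly the paper's argument: cover $F$ by sheafified representables, apply the preceding lemma to each composite $a\mathcal{C}(-,v_j)\Rightarrow a\mathcal{C}(-,x)$, and compose the resulting covers. The paper states this in two lines; you have merely spelled out the (routine) verification that the composed family is still jointly epimorphic.
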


\begin{proof}
    Take any cover of $F$ with sheafified representables $a\mathcal{C}(-,v_i)\Rightarrow F$ then apply the previous lemma for the composites $a\mathcal{C}(-,v_i)\Rightarrow a\mathcal{C}(-,x)$.
\end{proof}

\begin{theorem}
\label{main1}
    Let $(\mathcal{C},E)$ be a $\lambda $-site with $|E|\leq \lambda $, $\mathcal{C}$ being of local size $\leq \lambda $, $\lambda =cf(\lambda )$. Then $Sh(\mathcal{C},\langle E \rangle _{\lambda })$ has enough $\lambda $-points.
\end{theorem}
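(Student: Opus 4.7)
The goal is to show that the family of $\lambda$-points of $Sh(\mathcal{C},\langle E\rangle_{\lambda})$ is jointly conservative. By Theorem~\ref{classiftopos}, a $\lambda$-point corresponds to a $\lambda$-lex $E$-preserving functor $M:\mathcal{C}\to\mathbf{Set}$, whose inverse image $M^*=Lan_{ay}M$ is cocontinuous and $\lambda$-lex. My plan is to reduce conservativity to a concrete separation statement on subobjects of sheafified representables, and then invoke the combination of the preceding corollary with Lemma~\ref{compltopos}.

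The heart of the argument is the following separation step: given a proper subobject $\iota:F\hookrightarrow a\mathcal{C}(-,x_0)$, produce an $M$ with $M^*(F)\subsetneq M(x_0)$. By the preceding corollary, $F$ is covered by sheafified representables $\beta_i:a\mathcal{C}(-,u_i)\Rightarrow F$ such that each composite $\iota\circ\beta_i$ equals a sheafified post-composition $a((g_i)_*)$ for some $g_i:u_i\to x_0$ in $\mathcal{C}$. Since $\iota$ is mono and $(\beta_i)_i$ is jointly extremal epic onto $F$, the image of the family $(a((g_i)_*))_i$ inside $a\mathcal{C}(-,x_0)$ is exactly $F$; thus $F\subsetneq a\mathcal{C}(-,x_0)$ translates to $(ay(g_i))_i$ not being extremal epic. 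The contrapositive of Lemma~\ref{compltopos} then yields a $\lambda$-lex $E$-preserving $M$ with $(M(g_i))_i$ not jointly surjective, and since $M^*$ is exact and cocontinuous (so preserves image factorization), $M^*(F)=\bigcup_i\mathrm{Im}(M(g_i))\subsetneq M(x_0)$, as desired.

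The main obstacle is the reduction from ``jointly conservative'' to ``separates proper subobjects of sheafified representables''. Given a non-iso $\alpha:F\to G$, take its epi-mono factorization $\alpha=\iota\circ e$ with $\iota:\mathrm{Im}(\alpha)\hookrightarrow G$. Then either $\iota$ is not iso ($\alpha$ is non-epi), or $e$ is not mono; in the latter case the diagonal $\Delta_F:F\hookrightarrow F\times_G F$ is a proper subobject, using that $\iota$ mono implies the kernel pair of $\alpha$ coincides with the kernel pair of $e$. In each case I have a proper subobject of some sheaf $H$ (either $G$ or $F\times_G F$); I cover $H$ by an extremal epic family of sheafified representables $a\mathcal{C}(-,y_j)\to H$ and pull the proper subobject back. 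At least one such pullback is a proper subobject of $a\mathcal{C}(-,y_j)$ (otherwise the representable cover would factor through the proper subobject, forcing it to be epic and hence iso). Applying the concrete separation above yields an $M$ making that pullback proper, and since $M^*$ preserves pullbacks and image factorizations, $M^*(\alpha)$ is non-epi or non-mono, hence non-iso. This reduction is standard but bureaucratic; the genuine content of the theorem is packed into Lemma~\ref{compltopos}.
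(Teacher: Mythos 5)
Your proposal is correct and follows essentially the same route as the paper: cover $F$ by sheafified representables via the preceding corollary so that the composites into $a\mathcal{C}(-,x_0)$ are sheafified post-compositions, apply the contrapositive of Lemma~\ref{compltopos} to get an $M$ whose image of $(g_i)_i$ is not jointly surjective, and conclude via $M^*=Lan_{ay}M$ from Theorem~\ref{classiftopos}. The only difference is that you spell out the standard reduction from joint conservativity to detecting proper subobjects of sheafified representables (via epi--mono factorization and the diagonal), which the paper takes for granted by starting directly from a proper mono $\iota:F\Rightarrow G$.
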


\begin{proof}
    We have to prove that for an arbitrary proper mono $\iota :F\Rightarrow G$ there is some $M^*:Sh(\mathcal{C},\langle E \rangle _{\lambda })\to \mathbf{Set}$ $\lambda $-lex cocontinuous functor which keeps it proper. It suffices to prove this for $G=a\mathcal{C}(-,x)$, as we can find a cover $(a\mathcal{C}(-,x_j)\Rightarrow G)_j$, then form the pullback:

\[\begin{tikzcd}
	G && {\bigsqcup _j a\mathcal{C}(-,x_j)} \\
	& pb \\
	F && {\bigsqcup _j F_j}
	\arrow["\iota", Rightarrow, from=3-1, to=1-1]
	\arrow[Rightarrow, from=1-3, to=1-1]
	\arrow[Rightarrow, from=3-3, to=3-1]
	\arrow["{\bigsqcup _j \iota _j}"', Rightarrow, from=3-3, to=1-3]
\end{tikzcd}\]
    finally note that $\iota $ is proper iff at least one $\iota _j$ is proper.

    By the previous corollary we have
\[\begin{tikzcd}
	{a\mathcal{C}(-,u_i)} \\
	\dots && F && {a\mathcal{C}(-,x)}
	\arrow["\iota"', Rightarrow, from=2-3, to=2-5]
	\arrow["{\beta _i}"', Rightarrow, from=1-1, to=2-3]
	\arrow["{a((g_i)_{\circ })}", curve={height=-6pt}, Rightarrow, from=1-1, to=2-5]
\end{tikzcd}\]
    and since $\iota $ is proper $(ay(g_i))_i$ cannot be epimorphic. Therefore by Lemma \ref{compltopos} there is some $M:\mathcal{C}\to \mathbf{Set}$ $\lambda $-lex $E$-preserving for which $(M(g_i))_i$ is not jointly surjective. By Theorem \ref{classiftopos} its left Kan-extension $M^*=Lan_{ay}M$ is $\lambda $-lex cocontinuous and $M^* \circ ay \cong M$. But since $M^*(\beta _i)$'s form a jointly surjective family and $M^*(ay(g_i))$'s do not, it follows that $M^*(\iota )$ is proper.
\end{proof}

\begin{theorem}
\label{main2}
We have the following assumptions:
    \begin{itemize}
        \item $\aleph _0\leq \kappa =cf(\kappa )\vartriangleleft \lambda =cf(\lambda )$, $2^{ <\lambda } = \lambda $
        \item $(\mathcal{C},E)$ is a $\kappa $-site, $|\mathcal{C}|,|E| <\lambda  $
    \end{itemize}
Then using the notation of Theorem \ref{ptsthenpresheaf}, in the diagram 
\[\begin{tikzcd}
	{\mathcal{C}} &&& {\mathbf{Set}^{Mod(\mathcal{C})_{<\lambda }}} \\
	{\widetilde{\mathcal{C}}} \\
	\\
	{Sh(\widetilde{\mathcal{C}},\langle \varphi [E] \rangle _{\lambda})}
	\arrow["\varphi"', from=1-1, to=2-1]
	\arrow["ay"', from=2-1, to=4-1]
	\arrow[""{name=0, anchor=center, inner sep=0}, "ev", from=1-1, to=1-4]
	\arrow[""{name=1, anchor=center, inner sep=0}, "{Lan_{ay}Ran_{\varphi } (ev)}"', from=4-1, to=1-4]
	\arrow[""{name=2, anchor=center, inner sep=0}, "{Ran_{\varphi }(ev)}"{description}, from=2-1, to=1-4]
	\arrow["\cong"', draw=none, from=0, to=2]
	\arrow["{\cong }"', draw=none, from=2, to=1]
\end{tikzcd}\]
$Lan_{ay}Ran_{\varphi }(ev)$ is an equivalence.
\end{theorem}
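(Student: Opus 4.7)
The strategy is to deduce this from Theorem \ref{main1} via Theorem \ref{ptsthenpresheaf}. By the latter, the assertion that $Lan_{ay}Ran_{\varphi}(ev)$ is an equivalence (condition 2) is already known to be equivalent to condition 1: that $Sh(\widetilde{\mathcal{C}},\langle \varphi[E]\rangle_{\lambda})$ has enough $\lambda$-points. So the plan is simply to produce enough $\lambda$-points for this topos by applying Theorem \ref{main1} to the $\lambda$-site $(\widetilde{\mathcal{C}},\varphi[E])$.

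First I would dispatch the easy structural checks. By Proposition \ref{freelambdalim}, $\widetilde{\mathcal{C}}$ is $\lambda$-lex; since $\varphi$ is $\kappa$-lex it preserves the terminal, so $\varphi[E]$ still contains $id_1$ and $(\widetilde{\mathcal{C}},\varphi[E])$ qualifies as a $\lambda$-site. The bound $|\varphi[E]| \leq |E| < \lambda$ is immediate from the hypothesis on $E$.

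The main obstacle --- and the only place where the cardinal arithmetic hypothesis $2^{<\lambda}=\lambda$ enters --- is verifying that $\widetilde{\mathcal{C}}$ is essentially small with local size $\leq \lambda$. Recall from Proposition \ref{freelambdalim} that the objects of $\widetilde{\mathcal{C}}^{op}$ are $\kappa$-filtered $<\lambda$-colimits $M = \mathrm{colim}_I\,\mathcal{C}(x_i,-)$ of representables in $\mathbf{Lex}_{\kappa}(\mathcal{C},\mathbf{Set})$, with $|I|<\lambda$. Since $\mathcal{C}$ is small with $|\mathcal{C}|<\lambda$, the number of such index diagrams up to isomorphism is bounded by $\lambda^{<\lambda}$, so there are at most $\lambda^{<\lambda}$ objects. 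Given two such $M, N = \mathrm{colim}_J\,\mathcal{C}(y_j,-)$ with $|J|<\lambda$, the Yoneda lemma yields $\mathrm{Nat}(N,M) \cong \lim_J M(y_j) \cong \lim_J \mathrm{colim}_I\,\mathcal{C}(x_i,y_j)$. Each $\mathcal{C}(x_i,y_j)$ has size $<\lambda$, so by regularity of $\lambda$ each inner colimit has size $<\lambda$, and the outer $<\lambda$-limit embeds into a $<\lambda$-indexed product of $<\lambda$-sets, hence has cardinality at most $\lambda^{<\lambda}$. For regular $\lambda$, the hypothesis $2^{<\lambda}=\lambda$ gives $\lambda^{<\lambda}=\lambda$, furnishing both the essential smallness and the local size bound.

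With all hypotheses of Theorem \ref{main1} verified for $(\widetilde{\mathcal{C}},\varphi[E])$, that theorem produces enough $\lambda$-points for $Sh(\widetilde{\mathcal{C}},\langle \varphi[E]\rangle_{\lambda})$, and Theorem \ref{ptsthenpresheaf} then delivers the conclusion.
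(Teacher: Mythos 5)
Your proposal is correct and follows essentially the same route as the paper: everything reduces, via Theorem \ref{ptsthenpresheaf}, to applying Theorem \ref{main1} to the $\lambda$-site $(\widetilde{\mathcal{C}},\varphi[E])$, and the only substantive check is the cardinality bound on $\widetilde{\mathcal{C}}$, where $2^{<\lambda}=\lambda$ enters. The paper bounds $\mathrm{Nat}(F,G)$ more crudely by the set of functions $\bigsqcup_x F(x)\to\bigsqcup_x G(x)$, but this is the same estimate $\mu_2^{\mu_1}\leq\lambda$ as your Yoneda computation.
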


\begin{proof}
    All we have to check is that $\mathbf{Lex}_{\kappa }(\mathcal{C},\mathbf{Set})_{<\lambda }$ has local size $\leq \lambda $. But there are at most as many $F\Rightarrow G$ natural transformations as $\bigsqcup _{x\in \mathcal{C}}F(x) \to \bigsqcup _{x\in \mathcal{C}}G(x)$ maps, and since $\lambda $ is regular this is $(\mu _2)^{\mu _1}$ for some $\mu _1,\mu _2 <\lambda $. This is $\leq \lambda $ by the assumption $\mu <\lambda \Rightarrow 2^{\mu }\leq \lambda $.
\end{proof}

\begin{remark}
    To ensure that $\widetilde{\mathcal{C}}$ has local size $\leq \lambda $ we assumed $2^{<\lambda }=\lambda $. To get that $\widetilde{\mathcal{C}}$ has size $\leq \lambda $ we would need $\lambda ^{<\lambda }=\lambda $. However, these two assumptions are equivalent for a regular cardinal $\lambda $, see \cite[Exercise 5.21]{jech}.
\end{remark}

\begin{remark}
\label{rem57}
    Fix $\kappa =cf(\kappa )\geq \aleph _0$.  Then TFAE:
    \begin{enumerate}
        \item there are arbitrarily large cardinals $\lambda $ satisfying $\kappa \vartriangleleft \lambda =\lambda ^{<\lambda }$
        \item there are arbitrarily large cardinals $\lambda $ satisfying $\lambda ^{<\lambda }=\lambda $ and for $\alpha <\lambda $: $\alpha ^{<\kappa } <\lambda $.
    \end{enumerate}
    This follows from \cite[Fact 2.5]{LIEBERMAN20194560}. These equivalent conditions are true under GCH, or more generally under SCH + "CH holds at arbitrarily large cardinals of cofinality $\geq \kappa $". Indeed, if $\mu > 2^{< \kappa }$, $cf(\mu )\geq \kappa $ and $2^{\mu }=\mu ^{+}$ then $\lambda =\mu ^+$ satisfies both $\lambda ^{<\lambda }=\lambda $ (since $(\mu ^+)^{\mu }=(2^{\mu })^{\mu }=2^{\mu }=\mu ^+$) and $(<\lambda )^{<\kappa } =(< \lambda )$, i.e. $\mu ^{<\kappa }=\mu $ by \cite[Theorem 5.22.ii).b]{jech}.

    So under an additional set theoretic assumption we managed to prove that "every theory is eventually of presheaf type". It is unknown to us whether this follows from ZFC.
\end{remark}

We get the following corollaries:

    \begin{theorem}
    Assume 
    \begin{itemize}
        \item $\aleph _0\leq \kappa =cf(\kappa )\vartriangleleft \lambda =\lambda ^{<\lambda } $
        \item $(\mathcal{C},E)$ is a $\kappa $-site, $|\mathcal{C}|,|E| <\lambda  $
    \end{itemize}

    Then in $\mathbf{Set}^{Mod(\mathcal{C})_{<\lambda }}$ the closure of $\{ev_x :x\in \mathcal{C} \}$ under $\kappa $-cofiltered limits of size $<\lambda $ form a generating set.
\end{theorem}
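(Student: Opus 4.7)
The plan is to import, via the equivalence supplied by Theorem \ref{main2}, the standard generating family of sheafified representables $\{ay(\widetilde{x}) : \widetilde{x} \in \widetilde{\mathcal{C}}\}$ in $Sh(\widetilde{\mathcal{C}}, \langle \varphi [E]\rangle_{\lambda})$, and then to identify its image in $\mathbf{Set}^{Mod(\mathcal{C})_{<\lambda}}$ with the prescribed closure.

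First I would invoke Proposition \ref{freelambdalim}: every object $\widetilde{x} \in \widetilde{\mathcal{C}}$ is expressible as a $\kappa$-cofiltered limit $\widetilde{x} \cong \lim_i \varphi(x_i)$ of size $<\lambda$ with $x_i \in \mathcal{C}$, and the restriction functor $\varphi^*: \mathbf{Lex}_{\lambda}(\widetilde{\mathcal{C}}, \mathcal{E}) \to \mathbf{Lex}_{\kappa}(\mathcal{C}, \mathcal{E})$ is an equivalence whose quasi-inverse is $Ran_{\varphi}(-)$ for any complete $\mathcal{E}$. Applied to the $\kappa$-lex $E$-preserving functor $ev: \mathcal{C} \to \mathbf{Set}^{Mod(\mathcal{C})_{<\lambda}}$, this ensures that $Ran_{\varphi}(ev)$ is $\lambda$-lex, so
\[
Ran_{\varphi}(ev)(\widetilde{x}) \;\cong\; \lim_i Ran_{\varphi}(ev)(\varphi(x_i)) \;\cong\; \lim_i ev_{x_i}.
\]

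Next, by Theorem \ref{main2} the functor $Lan_{ay}Ran_{\varphi}(ev): Sh(\widetilde{\mathcal{C}}, \langle \varphi [E]\rangle_{\lambda}) \to \mathbf{Set}^{Mod(\mathcal{C})_{<\lambda}}$ is an equivalence, and the middle triangle in its statement gives $Lan_{ay}Ran_{\varphi}(ev) \circ ay \cong Ran_{\varphi}(ev)$. In any Grothendieck topos the sheafified representables form a generating set, so $\{ay(\widetilde{x}) : \widetilde{x} \in \widetilde{\mathcal{C}}\}$ generates $Sh(\widetilde{\mathcal{C}}, \langle \varphi [E]\rangle_{\lambda})$. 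Since equivalences of categories preserve (extremal-epi)-generating families, the image of this set under $Lan_{ay}Ran_{\varphi}(ev)$ also generates; by the previous display this image is (up to isomorphism) exactly $\{\lim_i ev_{x_i} : \widetilde{x} = \lim_i \varphi(x_i) \in \widetilde{\mathcal{C}}\}$, i.e.~the closure of $\{ev_x : x \in \mathcal{C}\}$ under $\kappa$-cofiltered limits of size $<\lambda$.

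There is essentially no hard step: the substantive work is absorbed into Theorem \ref{main2} and Proposition \ref{freelambdalim}. The only points to verify carefully are that $Ran_{\varphi}(ev)$ really is $\lambda$-lex (so that it converts $<\lambda$-sized $\kappa$-cofiltered limits in $\widetilde{\mathcal{C}}$ into the corresponding limits of $ev_x$'s in $\mathbf{Set}^{Mod(\mathcal{C})_{<\lambda}}$) and that the generating family of sheafified representables in the $\lambda$-topos travels across the equivalence — both of which are immediate from the cited results.
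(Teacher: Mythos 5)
Your proposal is correct and follows exactly the route the paper intends: the statement is presented as an immediate corollary of Theorem \ref{main2}, obtained by transporting the generating family of sheafified representables $ay(\widetilde{x})$ across the equivalence $Lan_{ay}Ran_{\varphi}(ev)$ and identifying $Ran_{\varphi}(ev)(\widetilde{x})$ with a $\kappa$-cofiltered $<\lambda$ limit of $ev_{x_i}$'s via the $\lambda$-lexness from Proposition \ref{freelambdalim}. The only cosmetic remark is that it suffices to note that the closure \emph{contains} the image of the generating family, so one need not worry about whether the closure stabilizes after one step.
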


\begin{theorem}
    Assume:
    \begin{itemize}
        \item $\aleph _0\leq \kappa =cf(\kappa )\vartriangleleft \lambda =\lambda ^{ <\lambda} $
        \item $(\mathcal{C},E)$ is a $\kappa $-site, $|\mathcal{C}|,|E| <\lambda  $
    \end{itemize}

    Take a full subcategory $\mathcal{A}$ of $\mathbf{Lex}_{\kappa }(\mathcal{C},\mathbf{Set})_{<\lambda }$ containing $Mod(\mathcal{C})_{<\lambda }$ and write $I:Mod(\mathcal{C})_{<\lambda }\xhookrightarrow{J_1}\mathcal{A}\xhookrightarrow{J_2} \mathbf{Lex}_{\kappa }(\mathcal{C},\mathbf{Set})_{<\lambda }$ for the full embeddings. Let $\mathcal{E}$ be a $\lambda $-topos and $\mathbf{Set}^{\mathcal{A}}\xrightarrow{M}\mathcal{E}$ be a $\lambda $-lex cocontinuous functor such that for any $(u_i\to x)_i$ family in $E$, the family $M((\restr{ev_{u_i}}{\mathcal{A}}\to \restr{ev_{x}}{\mathcal{A}})_i)$ is extremal epimorphic. Then a $\lambda $-lex cocontinuous extension 

\[\begin{tikzcd}
	{\mathbf{Set}^{\mathcal{A}}} && {\mathbf{Set}^{Mod(\mathcal{C})_{<\lambda }}} \\
	\\
	{\mathcal{E}}
	\arrow["M"', from=1-1, to=3-1]
	\arrow[""{name=0, anchor=center, inner sep=0}, "{J_1^{\circ }}", from=1-1, to=1-3]
	\arrow[""{name=1, anchor=center, inner sep=0}, "{\widetilde{M}}", dashed, from=1-3, to=3-1]
	\arrow["\cong"', draw=none, from=0, to=1]
\end{tikzcd}\]
    exists.
\end{theorem}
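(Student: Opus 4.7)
The plan is to produce $\widetilde{M}$ from $M$ by first extracting a $(\mathcal{C},E,\kappa )$-model in $\mathcal{E}$, then invoking the classifying-topos description from Theorem \ref{main2} to get a $\lambda $-lex cocontinuous functor on $\mathbf{Set}^{Mod(\mathcal{C})_{<\lambda }}$, and finally checking the required isomorphism on $\mathbf{Set}^{\mathcal{A}}$ by a density argument on representables.

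First, define $N:\mathcal{C}\to \mathcal{E}$ by $N(x)=M(\restr{ev_x}{\mathcal{A}})$, with $N(f)=M(\restr{ev_f}{\mathcal{A}})$ for arrows $f$ in $\mathcal{C}$. Since limits in $\mathbf{Set}^{\mathcal{A}}$ are pointwise and every object of $\mathcal{A}\subseteq \mathbf{Lex}_{\kappa }(\mathcal{C},\mathbf{Set})_{<\lambda }$ preserves $<\kappa $-limits, the assignment $x\mapsto \restr{ev_x}{\mathcal{A}}$ is a $\kappa $-lex functor $\mathcal{C}\to \mathbf{Set}^{\mathcal{A}}$; composing with the $\lambda $-lex (hence $\kappa $-lex) $M$ shows that $N$ is $\kappa $-lex, while the hypothesis on $E$-families says exactly that $N$ is $E$-preserving. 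Combining Theorem \ref{main2} with Theorem \ref{classiftopos} and the first claim of Theorem \ref{ptsthenpresheaf}, $\mathbf{Set}^{Mod(\mathcal{C})_{<\lambda }}$ classifies $(\mathcal{C},E,\kappa )$-models via $ev:\mathcal{C}\to \mathbf{Set}^{Mod(\mathcal{C})_{<\lambda }}$, so $N$ corresponds to a $\lambda $-lex cocontinuous $\widetilde{M}:\mathbf{Set}^{Mod(\mathcal{C})_{<\lambda }}\to \mathcal{E}$ satisfying $\widetilde{M}\circ ev\cong N$.

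It remains to show $\widetilde{M}\circ J_1^*\cong M$. Both sides are $\lambda $-lex cocontinuous $\mathbf{Set}^{\mathcal{A}}\to \mathcal{E}$; since $\mathbf{Set}^{\mathcal{A}}$ is the free cocompletion of $\mathcal{A}^{op}$, a natural isomorphism on the representables $r_F=\mathcal{A}(F,-)$ for $F\in \mathcal{A}$ will extend uniquely to the whole of $\mathbf{Set}^{\mathcal{A}}$. For such an $F$ one has the canonical presentation $F=colim_{(x,p)\in \int F}\ \mathcal{C}(x,-)$ in $\mathbf{Set}^{\mathcal{C}}$; applying $Nat(-,G)$ for $G\in \mathcal{A}$ gives $r_F(G)=Nat(F,G)=lim_{(x,p)\in \int F}\ G(x)=(lim_{\int F}\restr{ev_x}{\mathcal{A}})(G)$, so $r_F\cong lim_{\int F}\restr{ev_x}{\mathcal{A}}$ inside $\mathbf{Set}^{\mathcal{A}}$. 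The assumptions $|\mathcal{C}|<\lambda $ and $|F(x)|<\lambda $ pointwise, together with the regularity of $\lambda $, yield $|\int F|<\lambda $. Now $J_1^*$ is a right adjoint (to $Lan_{J_1}$), hence continuous, and $M$, $\widetilde{M}$ are $\lambda $-lex, so both composites commute with this $<\lambda $-limit; using $J_1^*\restr{ev_x}{\mathcal{A}}\cong ev_x$ together with $\widetilde{M}(ev_x)\cong N(x)=M(\restr{ev_x}{\mathcal{A}})$ one obtains $\widetilde{M}(J_1^*r_F)\cong lim_{\int F}\widetilde{M}(ev_x)\cong lim_{\int F}N(x)\cong lim_{\int F}M(\restr{ev_x}{\mathcal{A}})\cong M(r_F)$, naturally in $F$ through the functoriality of the category-of-elements construction.

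The delicate point is the bookkeeping in the middle step: one must chain the equivalences $Fun^*(\mathbf{Set}^{Mod(\mathcal{C})_{<\lambda }},\mathcal{E})\simeq Fun^*(Sh(\widetilde{\mathcal{C}},\langle \varphi [E]\rangle _{\lambda }),\mathcal{E})\simeq Mod_{\mathcal{E}}(\widetilde{\mathcal{C}},\varphi [E],\lambda )\simeq Mod_{\mathcal{E}}(\mathcal{C},E,\kappa )$ correctly and verify that the universal model at the $\mathcal{C}$-end is indeed $ev$. Everything else reduces to a routine limit/colimit computation using the free cocompletion of $\mathcal{A}^{op}$.
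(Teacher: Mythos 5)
Your proof is correct, and it reaches the conclusion by a route that differs from the paper's in how the triangle over $\mathbf{Set}^{\mathcal{A}}$ is verified. The paper works one level up: it notes that $M\circ J_2^*\circ y:\widetilde{\mathcal{C}}\to \mathcal{E}$ is $\lambda$-lex and $y[E]$-preserving, invokes the classifying property of $Sh(\widetilde{\mathcal{C}},\langle y[E]\rangle _{\lambda})$ to produce $\widehat{M}$ with $\widehat{M}\circ a\cong M\circ J_2^*$ (upgrading from the $y$-restriction by cocontinuity), identifies $I^*$ with $Lan_{ay}Ran_y(ev)\circ a$, and then closes the argument purely formally via $J_2^*\circ Lan_{J_2}\cong 1_{\mathbf{Set}^{\mathcal{A}}}$, with no limit computation at all. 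You instead descend all the way to a model $N=M(\restr{ev_{(-)}}{\mathcal{A}}):\mathcal{C}\to\mathcal{E}$, classify it to get $\widetilde{M}$, and then verify $\widetilde{M}\circ J_1^*\cong M$ on the representables $\mathcal{A}(F,-)$ by exhibiting each as the $<\lambda$-limit $\lim_{\int F}\restr{ev_x}{\mathcal{A}}$ and pushing the isomorphism through using $\lambda$-lexness of $M$ and $\widetilde{M}$. Your version makes explicit exactly where the hypothesis that $M$ is $\lambda$-lex (rather than merely lex cocontinuous) is consumed — namely in handling the non-representable objects $F\in\mathcal{A}$ — which the paper's formal argument hides inside the adjunction identities; the price is the naturality bookkeeping for the isomorphism $\mathcal{A}(F,-)\cong\lim_{\int F}\restr{ev_x}{\mathcal{A}}$ in $F$ (the same reindexing structure as the functor $\Delta$ of Lemma \ref{keylemma}), which you flag as routine and which does go through. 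Both arguments rely on the same chain of equivalences from Theorems \ref{classiftopos}, \ref{ptsthenpresheaf} and \ref{main2}, and your identification of the universal model with $ev$ is exactly the commutative triangle established in Theorem \ref{main2}.
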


\begin{proof}
    Unwinding everything that we've been hiding yields
\[
\adjustbox{scale=1}{
\begin{tikzcd}
	{\mathcal{C}} &&&&& {\mathbf{Set}^{Mod(\mathcal{C})_{<\lambda }}} \\
	{(\mathbf{Lex}_{\kappa }(\mathcal{C},\mathbf{Set})_{<\lambda })^{op}} \\
	& {\mathbf{Set}^{\mathcal{A}}} \\
	{\mathbf{Set}^{\mathbf{Lex}_{\kappa }(\mathcal{C},\mathbf{Set})_{<\lambda}}} \\
	{Sh((\mathbf{Lex}_{\kappa }(\mathcal{C},\mathbf{Set})_{<\lambda })^{op},\langle y[E]\rangle _{\lambda})} &&&&& \textcolor{rgb,255:red,128;green,128;blue,128}{\mathcal{E}}
	\arrow["y"', from=1-1, to=2-1]
	\arrow["ev", from=1-1, to=1-6]
	\arrow["{Lan_{ay}Ran_{y } (ev)}"{description, pos=0.7}, curve={height=24pt}, from=5-1, to=1-6]
	\arrow["{Ran_{y }(ev)}"{description}, from=2-1, to=1-6]
	\arrow["y"', from=2-1, to=4-1]
	\arrow["a"', from=4-1, to=5-1]
	\arrow["{J_2^{\circ }}"{description}, from=4-1, to=3-2]
	\arrow["{J_1^{\circ }}"{description}, from=3-2, to=1-6]
	\arrow["ev"', curve={height=80pt}, from=1-1, to=4-1]
	\arrow["M"{description}, color={rgb,255:red,128;green,128;blue,128}, from=3-2, to=5-6]
	\arrow["{\widehat{M}}"{description}, color={rgb,255:red,128;green,128;blue,128}, dashed, from=5-1, to=5-6]
\end{tikzcd}
}
\]

First note that all black triangles commute. Indeed, $I^{\circ }\circ y$ is a $\lambda $-lex functor whose $y$-restriction is isomorphic to $ev$, therefore $I^{\circ }\circ y \cong Ran_y(ev)$. Also $(Lan_{ay}Ran_y(ev)\circ a)\circ y$ is isomorphic to $Ran_y(ev)$, and since both $I^{\circ }$ and $Lan_{ay}Ran_y(ev)\circ a$ are cocontinuous this extends to an isomorphism between  $Lan_{ay}Ran_y(ev)\circ a $ and $ I^{\circ }$.

By assumption $M\circ J_2^{\circ }\circ y$ is $\lambda $-lex $y[E]$-preserving, hence there is a $\lambda $-lex cocontinuous $\widehat{M}$ with $\widehat{M}\circ a\circ y \cong M\circ J_2^{\circ }\circ y$. As $\widehat{M}\circ a$ and $M\circ J_2^{\circ }$ are cocontinuous it follows that $\widehat{M}\circ a \cong M \circ J_2^{\circ }$.

So now if we write $\widetilde{M}=\widehat{M}\circ (Lan_{ay}Ran_y(ev))^{-1}$ we have $\widetilde{M}\circ J_1^{\circ } \circ J_2^{\circ } \cong M\circ J_2^{\circ }$ and therefore $\widetilde{M}\circ J_1^{\circ } \cong M$ since $J_2^{\circ } \circ Lan_{J_2} \cong 1_{\mathbf{Set}^{\mathcal{A}}}$.

\end{proof}

\printbibliography

\end{document}